\title{Regularity for Double Phase Problems at \\ Nearly Linear Growth}
\author{Cristiana De Filippis \& Giuseppe Mingione}
\definecolor{citation}{rgb}{0.2,0.58,0.2} 
\definecolor{formula}{rgb}{0.1,0.2,0.6}
\definecolor{url}{rgb}{0.3,0,0.5}
\newcommand{\reqnomode}{\tagsleft@false}
\def\dx{\, dx}
\def\dy{\, dy}
\def \d{\, d }
\def \diver{\,{\rm div}}
\def\eh{\mathbb H}
\def\dist{\,{\rm dist}}
\newcommand{\ao}{\alpha_0}
\newcommand{\tia}{\tilde \alpha}
\def\supp{\,{\rm supp}}
\newcommand\loc{{\rm loc}}
\newcommand\data{\textnormal{\texttt{data}}}
\newcommand\ssss{\mathfrak{s}}
\newcommand{\ti}[1]{\tilde{#1}}
\newcommand\ccc{\mathfrak{c}}
\newcommand{\bb}{\textnormal{\texttt{s}}}
\newcommand\mum{\mu_{\textnormal{max}}}
\def\ds{\,{\rm d}s}
\DeclareRobustCommand*{\bfseries}{%
  \not@math@alphabet\bfseries\mathbf
  \fontseries\bfdefault\selectfont
  \boldmath
}
\newlength{\defbaselineskip}
\newcommand{\mint}{\mathop{\int\hskip -1,05em -\, \!\!\!}\nolimits}
\numberwithin{equation}{section}
\newcommand{\kk}{\kappa}
\def\en{\mathbb N}
\def\er{\mathbb R}
\newcommand\eps\varepsilon
\def\eqn#1$$#2$${\begin{equation}\label#1#2\end{equation}}
\newcommand{\be}{\begin{equation}}
\newcommand{\ee}{\end{equation}}
\newcommand{\rr}{\varrho}
\newcommand{\snr}[1]{\lvert #1\rvert}
\newcommand{\nr}[1]{\lVert #1 \rVert}
\def\er{\mathbb R}
\newcommand{\N}{\mathbb{N}}
\def\name[#1, #2]{#1 #2}
\newcommand{\mm}{\mathfrak{m}}
\def \Hi{H_{\textnormal{i}}}
\def \tg{\tilde g}
\newcommand{\BB}{\mathcal{B}_1}
\def \M{\mathfrak{M}}
\newcommand{\rif}[1]{(\ref{#1})}
\newcommand{\trif}[1] {\textnormal{\rif{#1}}}
\newcommand{\stackleq}[1]{\stackrel{\rif{#1}}{ \leq}}
\begin{document}


\maketitle

\vspace{3mm}
\begin{abstract}
Minima of functionals of the type
$$ w\mapsto \int_{\Omega}\left[\snr{Dw}\log(1+\snr{Dw})+a(x)\snr{Dw}^{q}\right] \dx\,, \quad 0\leq a(\cdot) \in C^{0, \alpha}\,,$$
with $\Omega \subset \er^n$, have locally H\"older continuous gradient provided $1 < q < 1+\alpha/n$. 
\end{abstract}


\tableofcontents

\vspace{3mm}
\setcounter{tocdepth}{1}
\section{Introduction}\label{intro}
In this paper we prove the first Schauder type results for minima of nonuniformly elliptic integrals with nearly linear growth, and under optimal bounds on the rate of nonuniform ellipticity. This is a non-trivial task, as in nonuniformly elliptic problems Schauder type estimates change their nature and cannot be achieved via perturbation methods. This was first shown by the counterexamples in \cite{sharp, FMM}.  
A relevant model functional we have in mind is
\eqn{modello}
$$
 w\mapsto \mathcal{L}(w,\Omega):=\int_{\Omega}\left[\ccc(x)\snr{Dw}\log(1+\snr{Dw})+a(x)\snr{Dw}^{q}\right] \dx\,.
$$
Here, as also in the following, $\Omega \subset \er^n$ denotes an open subset, $n\geq 2$ and $q>1$; as our results will be local in nature, without loss of generality we assume that $\Omega$ is also bounded. The functions $a(\cdot), \ccc(\cdot)$ are bounded and non-negative with $\ccc(\cdot)$ which is also bounded away from zero; a crucial point here is that $a(\cdot)$ is instead {\em allowed to vanish}. For the definition of local minimizer see \rif{defilocal}. We are interested in the case $a(\cdot), \ccc(\cdot)$ are only H\"older continuous, when proving gradient H\"older regularity of minima in fact corresponds to establish a nonlinear version of Schauder estimates (originally also due to Hopf and Caccioppoli). The functional in \rif{modello} appears to be a combination of the classical nearly linear growth one
\eqn{modellol}
$$
 w\mapsto \int_{\Omega}\ccc(x)\snr{Dw}\log(1+\snr{Dw})\dx
$$
treated when $\ccc(\cdot)\equiv 1$ for instance in \cite{FM} and by Marcellini \& Papi in \cite{MP}, 
with a $q$-power growth term weighted by a non-negative coefficient $a(\cdot)$. The terminology ``nearly linear'' accounts for an integrand whose growth in the gradient variable is superlinear, but it is still slower than that of any power type integrand as $z \mapsto |z|^p$ with $p>1$. When $\ccc(\cdot)\equiv 1$ the functional $\mathcal{L} (\cdot)$ can be in fact considered as a limiting case ($p\to 1$) of the by now largely studied double phase integral
\eqn{doppio}
$$
 w\mapsto \int_{\Omega}\left(\snr{Dw}^p+a(x)\snr{Dw}^{q}\right) \dx \,,\qquad  1 <p < q\,\,.
$$
This last one is a basic prototype of a nonautonomous functional featuring nonstandard polynomial growth conditions and a soft kind of nonuniform ellipticity (see discussion below). 
Originally introduced by Zhikov \cite{Z0, Z1, Zhom} in the setting of Homogenization of strongly anisotropic materials, the functional in \rif{doppio} was first studied in \cite{CM, sharp, FMM} and can be thought as a model for composite media with different hardening exponents $p$ and $q$. The geometry of the mixture of the two materials is in fact described by the zero set $\{a(x)\equiv 0\}$ of the coefficient $a(\cdot)$, where the transition from $q$-growth to $p$-growth takes place. In turn, the functional in \rif{modellol} appears for instance in the theory of plasticity with logarithmic hardening, another borderline case between plasticity with power hardening ($p>1$) and perfect plasticity ($p=1$) \cite{FrS, FS1}. Here, we are mostly interested in the theoretical issues raised by functionals as \rif{modello} in the context of regularity theory. In fact, $\mathcal{L} (\cdot)$ provides a basic example of nonuniformly and nonautonomous elliptic functional for which the currently available techniques do not allow to prove sharp Schauder type estimates. 
  In particular, although being a limiting case of \rif{doppio}, there is no way to adapt the methods available for \rif{doppio} to treat $\mathcal{L} (\cdot)$, already when $\ccc(\cdot)\equiv 1$. We recall that general nonuniformly elliptic problems have been the object of intensive investigation over the last two decades \cite{BS0, BS, BS2, bbgi, bildhauer, BF, BF2, FM}, starting from the more classical case of minimal surface type functionals \cite{LU, Simon} and those from the 60s \cite{ivanov0, ivanov1, IO, serrin, trudinger}. The monograph \cite{ivanov2} gives a valuable account of the early age of the theory and we recall that nonuniformly elliptic energies have been considered several times in the setting of nonlinear elasticity \cite{BFM, FMal, FMa, M0, ma5}. As for linear and nearly linear growth conditions in the vectorial case, we mention the recent extensive work of Kristensen \& Gmeineder \cite{gme1, gme2, gmek1, gmek2}. But let us first recall the situation for \rif{doppio}. The bound
\eqn{doppiob}
$$
\frac{q}{p} \leq 1+ \frac{\alpha}{n}
$$
guarantees the local H\"older continuity of the gradient of minima of \rif{doppio}  \cite{BCM, CM, DM}. Condition \rif{doppiob} is optimal in the sense that its failure generates the existence of minimizers that do not even belong to $W^{1,q}_{\loc}$ \cite{sharp} and develop singularities on fractals with almost maximal dimension \cite{FMM}. In \rif{doppiob} notice the delicate interaction between ambient dimension, the $(p,q)$-growth conditions in the gradient and the regularity of coefficient $a(\cdot)$. Despite the simple form of \rif{doppio}, all this already happens in the scalar case. In the autonomous case bounds of the type in \rif{doppiob}, with no appearance of $\alpha$, play a key role in the regularity theory of functionals with nonstandard growth as built by Marcellini in his by now classical papers \cite{M0, M1, M2, M3}. The key property of the functional \rif{doppio} is that it is \emph{uniformly elliptic} in the classical (pointwise) sense. This means that, with 
$G(x,z):= |z|^{p}+a(x)|z|^q$, we have
\eqn{finita}
$$
\sup_{x\in \Omega,\snr{z}\geq 1}\, \mathcal R_{G(x, \cdot)} (z)< c(p,q)\,,
$$ where $\mathcal R_{G(x, \cdot)} $ denotes the (pointwise) {\em ellipticity ratio} of $G(x, \cdot)$, i.e., 
$$\mathcal R_{G(x, \cdot)} (z):=  \frac{\mbox{highest eigenvalue of}\ \partial_{zz}  G(x,z)}{\mbox{lowest eigenvalue of}\  \partial_{zz} G(x,z)}\,.
$$
Nevertheless, the functional $\mathcal{L} (\cdot)$ exhibits a weaker form of nonuniform ellipticity, detectable via a larger quantity called nonlocal ellipticity ratio \cite{ciccio}. This is defined by
$$
\mathcal R_{G} (z, B):=  \frac{\sup_{x\in B}\, \mbox{highest eigenvalue of}\ \partial_{zz}  G(x,z)}{\inf_{x\in B}\, \mbox{ lowest eigenvalue of}\  \partial_{zz} G(x,z)}\,,
$$
where $B \subset \Omega$ is any ball. Indeed, note that 
\eqn{limitazione}
$$\mathcal R_{G} (z, B)\approx 1+ \|a\|_{L^\infty(B)}|z|^{q-p}$$
holds whenever $\{a(x)\equiv 0\}\cap B$ is nonempty, and it is therefore unbounded with respect to $|z|$. We refer to \cite{ciccio} for a discussion on these quantities; we just note that they do coincide in the autonomous case. It is precisely the occurrence of both \rif{finita} and \rif{limitazione} to imply the regularity of minima of the double phase functional in \rif{doppio} when assuming \rif{doppiob}. Indeed \rif{doppiob} and \rif{limitazione}, allow to implement a delicate perturbation method based on the fact that minimizers of the frozen integral $w \mapsto \int G(x_0, Dw)\dx$ enjoy good decay estimates, which is a consequence of \rif{finita}. A similar scheme, based on the occurrence of pointwise uniform ellipticity \rif{finita}, is the starting point for treating larger classes of problems featuring non-polynomial ellipticity conditions, see for instance the recent interesting papers by H\"asto \& Ok  \cite{HO1, HO2}. These also include integrals of the type 
$$
 w\mapsto \int_{\Omega}\left[\Phi_1(|Dw|)+a(x)\Phi_2(|Dw|)\right] \dx \,,\qquad  1 <p < q\,,
$$
where $t \mapsto \Phi_1(t)$ has superlinear growth (these have been treated in \cite{byun0, byun1}). Such schemes completely break down in the case of functionals as $\mathcal{L} (\cdot)$ since the boundedness of $z \mapsto \mathcal R_{G(x, \cdot)}(\cdot)$ fails for $G(x, z)\equiv  \ccc(x)|z|\log(1+|z|)+a(x)|z|^q$. Indeed, already in the case of the functional in \rif{modellol}, 
which is nonuniformly elliptic in the classical sense (i.e., \rif{finita} fails), no Lipschitz continuity result for minima is available but when considering differentiablity assumptions on $\ccc(\cdot)$ (see \cite{dm}). In other words, the validity of Hopf-Caccioppoli-Schauder estimates is an open issue. We fix the current situation in the following:
\begin{theorem}[Nearly linear Hopf-Caccioppoli-Schauder]\label{t1}
Let $u\in W^{1,1}_{\loc}(\Omega)$ be a local minimizer of the functional $\mathcal{L}(\cdot)$ in \trif{modello}, with
\eqn{bound}
\begin{cases}
$$
\displaystyle 0\le a(\cdot)\in C^{0,\alpha}(\Omega), & 1<q<1+\alpha/n\\
 \ccc(\cdot)\in C^{0, \ao}_{\loc}(\Omega),&\displaystyle 1/\Lambda \leq  \ccc(\cdot) \leq \Lambda\,,
\end{cases}
$$ where $\alpha, \ao\in (0,1)$ and $\Lambda\geq 1$. 
Then $Du$ is locally H\"older continuous in $\Omega$ and moreover, for every ball $B\equiv B_{r}\Subset \Omega$, $r\leq1$, the inequality
$$
\nr{Du}_{L^{\infty}(B/2)}\le c\left(\mint_{B}\left[\snr{Du}\log(1+\snr{Du})+a(x)\snr{Du}^{q}\right] \dx\right)^{\vartheta} +c
$$
holds with $c\equiv c(n,q, \Lambda,\alpha, \alpha_0,\nr{a}_{C^{0,\alpha}}, \nr{\ccc}_{C^{0,\ao}})$ and $\vartheta\equiv \vartheta(n,q,\alpha, \alpha_0)>0$.
\end{theorem}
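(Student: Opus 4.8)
\emph{Plan of the proof.} The argument proceeds through a two-parameter regularization that disentangles the two obstructions: the non-polynomial, nonuniform ellipticity of the frozen energy, and the $x$-dependence via the merely H\"older coefficients. Since $u$ lies a priori only in $W^{1,1}_{\loc}$, first mollify the coefficients, $a_j:=a*\phi_j\ge 0$ and $\ccc_j:=\ccc*\phi_j\in[1/\Lambda,\Lambda]$, with H\"older seminorms bounded uniformly in $j$, and replace the density by
$$
F_j(x,z):=\ccc_j(x)\,\snr{z}\log(1+\snr{z})+a_j(x)\big(\mu_j^2+\snr{z}^2\big)^{q/2}+\sigma_j\big(\mu_j^2+\snr{z}^2\big)^{s/2}\,,
$$
with $\mu_j,\sigma_j\downarrow 0$ and $s\in(q,\infty)$ fixed: $\mu_j$ kills the degeneracy of the $q$-term at the origin, while the $\sigma_j$-term puts the minimizers into $W^{1,s}$ and hence, by classical theory with smooth coefficients, makes them smooth with bounded gradient. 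Minimize $w\mapsto\int_B F_j(x,Dw)\dx$ on a fixed ball $B\Subset\Omega$ among maps agreeing with a mollification of $u$ on $\partial B$; strict convexity gives unique minimizers $v_j$, and a variational comparison (or $\Gamma$-convergence) argument yields $v_j\to u$ with convergence of energies. It then suffices to prove the stated gradient bound for the $v_j$ with constants \emph{independent of $j$}; because the leading term stays ``nearly linear'', every constant below must depend only on the doubling character of $t\mapsto t\log(1+t)$, never on $s,\mu_j,\sigma_j$ or on the mollification.

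\emph{Fractional differentiability of the gradient.} Fix $v\equiv v_j$ and let $V_\Phi(z)$ be the nonlinear field attached to $\Phi(t)=t\log(1+t)$, so that $\snr{V_\Phi(z_1)-V_\Phi(z_2)}^2$ bounds from below the ellipticity gap of $z\mapsto\partial_z[\ccc(x)\Phi(\snr{z})]$. As $\ccc(\cdot)$ and $a(\cdot)$ are not differentiable, one does not differentiate the Euler--Lagrange equation but takes finite differences $\tau_h v$, subtracts the equations at $x$ and $x+h$, and tests with $\eta^2\tau_h v$, $\eta$ a cut-off. Estimating the coefficient increments by $[a]_{0,\alpha}\snr{h}^{\alpha}$ and $[\ccc]_{0,\ao}\snr{h}^{\ao}$ and using the monotonicity of $z\mapsto\partial_z F_j(x,z)$, one gets a second-kind Caccioppoli inequality for $\tau_h V_\Phi(Dv)$ which, after division by a suitable power of $\snr{h}$, upgrades to: for an explicit $\theta=\theta(\alpha,\ao)\in(0,1)$ and concentric $B'\Subset B$ with $\operatorname{dist}(B',\partial B)\gtrsim\varrho$,
$$
[V_\Phi(Dv)]_{W^{\theta,2}(B')}^{2}\,\le\,\frac{c}{\varrho^{2}}\int_{B}\Big(1+a(x)\snr{Dv}^{q}+\ccc(x)\snr{Dv}\log(1+\snr{Dv})\Big)\dx\,,
$$
with $c$ independent of $j$. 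This is where the H\"older continuity of \emph{both} coefficients is used most heavily, and $\theta$ is forced to balance $\alpha$, $\ao$ and $n$.

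\emph{Self-improving integrability, the threshold, and the Lipschitz bound.} Fractional Sobolev embedding $W^{\theta,2}\hookrightarrow L^{2n/(n-2\theta)}$ converts the previous estimate into higher integrability of $Dv$: each step trades $\theta$ fractional derivatives for a factor $1+2\theta/(n-2\theta)$ in the integrability exponent, up to logarithmic corrections from the nearly linear leading term. The dangerous quantity $a(x)\snr{Dv}^{q}$ is controlled only in $L^1$ at the outset, and here $q<1+\alpha/n$ enters decisively: it is exactly the condition making the gain from fractional differentiability strictly exceed both the integrability consumed by the $q$-power and the degeneracy of the \emph{nonlocal} ellipticity ratio $\mathcal R_{F_j}(z,B)\approx 1+\log(1+\snr{z})+\|a\|_{L^\infty(B)}\snr{z}^{q-1}$, i.e.\ \rif{doppiob} read with $p=1$. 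After finitely many steps, their number depending only on $n,q,\alpha,\ao$, one gets a quantitative bound $\int_{B'}\big(a(x)\snr{Dv}^q+\ccc(x)\snr{Dv}\log(1+\snr{Dv})\big)^{\kk}\dx\le c$ for some $\kk>1$. A Caccioppoli inequality on the super-level sets $\{\snr{Dv}>\lambda\}$, combined with this higher integrability and the Sobolev inequality, then powers a De Giorgi/Moser iteration for $\snr{Dv}$ that closes precisely because $q<1+\alpha/n$, giving
$$
\nr{Dv_j}_{L^{\infty}(B/2)}\,\le\, c\left(\mint_{B}\big[\snr{Dv_j}\log(1+\snr{Dv_j})+a(x)\snr{Dv_j}^{q}\big]\dx\right)^{\vartheta}+c
$$
with $c,\vartheta>0$ independent of $j$; letting $j\to\infty$ transfers this to $u$.

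\emph{Gradient H\"older continuity, and the main obstacle.} Once $Du\in L^{\infty}_{\loc}$, on the relevant bounded-gradient range the density is \emph{uniformly} elliptic in the classical pointwise sense, its ellipticity ratio being then bounded by a constant depending on $\nr{Du}_{L^{\infty}}$, $\nr{a}_{\infty}$ and $q$; hence one compares $u$ on small balls $B_r(x_0)$ with the minimizer of the frozen energy $w\mapsto\int[\ccc(x_0)\snr{Dw}\log(1+\snr{Dw})+a(x_0)\snr{Dw}^{q}]\dx$ sharing the same boundary values, uses the classical $C^{1,\beta}$-decay available for the latter together with the $C^{0,\alpha}$ and $C^{0,\ao}$-continuity of the coefficients to control the comparison error, and so obtains a Campanato excess decay for $Du$, whence $Du\in C^{0,\beta}_{\loc}(\Omega)$. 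The crux, and the point where the polynomial double phase machinery genuinely breaks, is running the chain fractional differentiability $\to$ higher integrability $\to L^{\infty}$ with a leading term that is only logarithmically superlinear: there is no Sobolev anchor of exponent $>1$ to absorb error terms, so every absorption must be carried out tracking $L\log L$-type norms and exploiting the precise, infinitesimally-better-than-linear improvement of $t\log(1+t)$, with constants stable as the growth degenerates to linear. Balancing this slender gain against the loss encoded in the nonlocal ellipticity ratio is what pins down the sharp bound $q<1+\alpha/n$, while the simultaneous presence of two merely H\"older coefficients is what makes the difference-quotient step delicate.
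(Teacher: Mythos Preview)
Your outline follows the pattern that works for the superlinear double phase \rif{doppio}, but it breaks at the very point where the nearly linear growth makes this problem new. Two concrete gaps:

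\textbf{The difference-quotient step does not start.} Testing with $\tau_{-h}(\eta^2\tau_h v)$ and splitting the coefficient error, the term coming from $a(\cdot)$ is $|h|^{\alpha}\int\eta^2|Dv|^{q-1}|\tau_hDv|$. To absorb $|\tau_hDv|$ into the monotonicity you must write $|\tau_hDv|\lesssim(|Dv|+1)^{1/2}|\tau_hV_{1,1}(Dv)|$ (here $\mu=1$), and after Young's inequality you are left with $|h|^{2\alpha}\int|Dv|^{2q-1}$. Since $2q-1>1$ and the only a priori information is $|Dv|\log(1+|Dv|)+a(x)|Dv|^q\in L^1$ with $a(\cdot)$ allowed to vanish, this integral is not controlled; neither the $\sigma_j$-term nor the mollification of $a$ saves you without making constants depend on $j$. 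Your phrase ``every absorption must be carried out tracking $L\log L$-type norms'' names the obstacle but supplies no mechanism to overcome it.

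\textbf{The level-set Caccioppoli is unjustified.} A Caccioppoli inequality on $\{|Dv|>\lambda\}$ requires differentiating the Euler--Lagrange equation. With merely H\"older $a(\cdot),\ccc(\cdot)$ you cannot; with mollified $a_j,\ccc_j$ you can, but the resulting constants depend on $\|Da_j\|_{\infty}$, hence on $j$. This is exactly the obstruction the paper's introduction flags when it says perturbation and Bernstein-type arguments both fail.

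The paper bypasses both issues by a different mechanism. On each small ball $8B_h$ of radius $|h|^{\beta_0}$ it compares $u_\varrho$ with the minimizer $v$ of the \emph{frozen} integrand $H_{\varrho,\textnormal{i}}$ (constant coefficients), for which one \emph{can} differentiate and obtain a genuine level-set Caccioppoli \rif{caccada}---but for the intrinsic quantity $E_{\varrho,\textnormal{i}}(|Dv|)\approx|Dv|^{2-\mu}+a_{\textnormal{i}}|Dv|^q$, not for $|Dv|$ itself. A quantitative comparison (Lemma~\ref{comparazione}) transfers this to $E_\varrho(x,|Du_\varrho|)$ with an error $|h|^{\beta_0\tilde\alpha}$, and patching over a dyadic grid of such balls (the ``atomic'' step, \S\ref{hy2}) yields the renormalized fractional Caccioppoli \rif{16} on level sets of $E$. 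Crucially the constants there carry explicit powers $\M^{\ssss_i}$ of the sought-after sup-norm, and the iteration to $L^\infty$ is closed via nonlinear Havin--Mazya--Wolff potentials (Lemma~\ref{revlem}) rather than a plain Moser scheme. Choosing the free parameters $\delta_1,\delta_2,\beta$ so that all exponents $\ssss_i$ stay below $1$ is precisely where $q<1+\alpha/n$ enters; a direct difference-quotient iteration offers no such parameters to tune.

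For the H\"older step, your perturbation idea is in spirit what the paper does (\S\ref{holsec}), but note that the frozen integrand $\ccc(x_0)|z|\log(1+|z|)+a(x_0)|z|^q$ is \emph{not} covered by classical $C^{1,\beta}$ theory when $a(x_0)$ is small: the paper spends \S\ref{latecnica} (Proposition~\ref{tecnica}) proving the required decay with constants independent of $a(x_0)$ and $\omega$, borrowing from singular parabolic techniques.
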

Of course the condition $q<1+\alpha/n$ in \rif{bound}$_1$ is the sharp borderline version of \rif{doppiob} (let $p\to 1$), that guarantees gradient H\"older continuity of minima for the classical double phase functional \rif{doppio}. Let us only observe that the equality case in \rif{doppiob}, in comparison to \rif{bound}$_1$ assumed here, is typically linked to superlinear growth conditions $p>1$ in \rif{doppio}. The equality in \rif{doppiob} is indeed obtained via Gehring type results and it is ultimately again an effect of the uniform ellipticity in the sense of \rif{finita} (see \cite{BCM, dm0}). Such effects are missing in the nearly linear growth case.  
Once local gradient H\"older continuity is at hand, in the non-singular/degenerate case we gain\begin{corollary}[Improved exponents]\label{t2}
Under assumptions \trif{bound}, local minimizers of the functional 
\eqn{nondeggi}
$$
 w\mapsto \int_{\Omega}\big[\ccc(x)\snr{Dw}\log(1+\snr{Dw})+a(x)(\snr{Dw}^{2}+1)^{q/2}\big] \dx
$$
are locally $C^{1,\tia/2}$-regular in $\Omega$, where $\tia:=\min\{\alpha, \alpha_0\}$. In particular, local minimizers of the functional in \trif{modellol} are locally $ C^{1,\alpha_0/2}$-regular in $\Omega$ provided \trif{bound}$_2$ holds. 
\end{corollary}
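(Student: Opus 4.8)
The plan is to derive Corollary \rif{t2} from Theorem \rif{t1} by a classical Schauder-type perturbation (freezing) argument, which becomes available precisely because Theorem \rif{t1} confines the gradient of a local minimizer to a range on which the integrand in \rif{nondeggi} is \emph{uniformly} elliptic. First I would apply Theorem \rif{t1} — whose proof goes through verbatim with $(\snr{z}^{2}+1)^{q/2}$ in place of $\snr{z}^{q}$ — to obtain $Du\in L^{\infty}_{\loc}(\Omega)$ for any local minimizer $u$ of \rif{nondeggi}. Fixing a ball $B_{2R}\Subset\Omega$ and setting $M:=\nr{Du}_{L^{\infty}(B_{R})}+1$, the next step is the elementary observation that $H(x,z):=\ccc(x)\snr{z}\log(1+\snr{z})+a(x)(\snr{z}^{2}+1)^{q/2}$ is, on $\{\snr{z}\le M\}$, a uniformly elliptic integrand with $C^{2}$ dependence on $z$: the Hessian of $z\mapsto\snr{z}\log(1+\snr{z})$ lies between two positive multiples of the identity there (degenerating only as $\snr{z}\to\infty$), $z\mapsto(\snr{z}^{2}+1)^{q/2}$ is smooth with bounded, nonnegative Hessian, and — here the non-degenerate form of the second term is essential — $z\mapsto\partial_{zz}H(x,z)$ is Lipschitz on $\{\snr{z}\le M\}$ uniformly in $x$, while by \rif{bound} the map $x\mapsto\partial_{zz}H(x,z)$ is $C^{0,\tia}$, uniformly for $\snr{z}\le M$, with $\tia=\min\{\alpha,\alpha_{0}\}$. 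From here on all constants are allowed to depend on $M$ (hence, through Theorem \rif{t1}, on the data and on the energy of $u$), which is harmless for a local statement.

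The core is then the standard freezing/comparison scheme. For $x_{0}\in B_{R/2}$ and $\varrho\le R/4$, I would let $v$ be the minimizer of the frozen functional $w\mapsto\int_{B_{\varrho}(x_{0})}H(x_{0},Dw)\dx$ in the Dirichlet class of $u$; a standard barrier/maximum-principle argument for its (uniformly elliptic) Euler--Lagrange equation — applicable since the trace of $u$ on $\partial B_{\varrho}(x_{0})$ is Lipschitz with constant $\le M$ — or a direct energy comparison, gives $\nr{Dv}_{L^{\infty}(B_{\varrho}(x_{0}))}\le cM$, so that $v$ too ranges in the uniform-ellipticity regime. Then the classical interior regularity theory for autonomous, uniformly elliptic integrals (differentiate the frozen equation, apply De Giorgi--Nash--Moser and Schauder) gives $v\in C^{2,\gamma}_{\loc}$ together with the excess decay
$$
\mint_{B_{t\varrho}(x_{0})}\snr{Dv-(Dv)_{B_{t\varrho}(x_{0})}}^{2}\dx\ \le\ c\,t^{2}\mint_{B_{\varrho}(x_{0})}\snr{Dv-(Dv)_{B_{\varrho}(x_{0})}}^{2}\dx\,,\qquad 0<t\le1\,.
$$
On the other hand, comparing the minimality of $u$ against the competitor $v$ with that of $v$ against $u$, and using $\snr{H(x,z)-H(x_{0},z)}\le c\varrho^{\tia}$ for $\snr{z}\le cM$, $x\in B_{\varrho}(x_{0})$, together with the uniform convexity of $H(x_{0},\cdot)$ on $\{\snr{z}\le cM\}$, yields the comparison estimate
$$
\mint_{B_{\varrho}(x_{0})}\snr{Du-Dv}^{2}\dx\ \le\ c\,\varrho^{\,\tia}\,.
$$

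Combining the two previous displays in the usual way, the averaged excess $\Phi(x_{0},t):=\mint_{B_{t}(x_{0})}\snr{Du-(Du)_{B_{t}(x_{0})}}^{2}\dx$ satisfies $\Phi(x_{0},t\varrho)\le c\,t^{2}\Phi(x_{0},\varrho)+c\,t^{-n}\varrho^{\,\tia}$; since $\tia<2$, a classical iteration lemma gives $\Phi(x_{0},\varrho)\le c\,\varrho^{\,\tia}$ for all $x_{0}\in B_{R/4}$ and $\varrho\le R/8$, and the integral (Campanato--Meyers) characterization of Hölder continuity then yields $Du\in C^{0,\tia/2}_{\loc}(\Omega)$, i.e. $u\in C^{1,\tia/2}_{\loc}(\Omega)$ — this is where the exponent gets halved, since a $\varrho^{\tia}$-decay of the squared averaged gradient excess corresponds to membership in the Campanato space $\mathcal{L}^{2,n+\tia}$. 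For the functional in \rif{modellol} there is no $a$-term, the only coefficient being $\ccc(\cdot)\in C^{0,\alpha_{0}}_{\loc}$, so the same argument (applying Theorem \rif{t1} with $a\equiv0$) gives $u\in C^{1,\alpha_{0}/2}_{\loc}(\Omega)$. I expect the only real difficulty — the perturbation scheme itself being entirely classical once uniform ellipticity is in force — to be the careful justification of the frozen a priori estimates and of the comparison estimate in the nearly linear regime while tracking the dependence of the ellipticity constants on $M=\nr{Du}_{L^{\infty}}$, which blows up as $M\to\infty$; it is exactly Theorem \rif{t1} that keeps $M$ finite and thereby unlocks the argument.
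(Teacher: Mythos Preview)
Your proposal is correct and follows essentially the same route as the paper: once Lipschitz regularity is secured (via Theorem \ref{t1}/\ref{t3}), the non-degeneracy $\bb>0$ makes the integrand uniformly elliptic on the range of $Du$, and the standard freezing/comparison scheme with a frozen minimizer $v$ delivers $\mint|Du-Dv|^2\lesssim\varrho^{\tilde\alpha}$ and hence $Du\in C^{0,\tilde\alpha/2}_{\loc}$ by Campanato --- this is exactly the content of Section \ref{ultimina} (which in turn refers to \cite[Section 10]{piovra}). One small caution: your global Lipschitz bound for $v$ on all of $B_\varrho(x_0)$ via a barrier/maximum-principle argument is not quite routine for this class of integrands; the paper instead uses the \emph{interior} Lipschitz estimate of Proposition \ref{fzfz} for $v$ and localizes the comparison accordingly, which is the safer route and costs nothing in the final iteration.
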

We do not expect that, at least with the currently available techniques, the exponent $\tia/2$ in Corollary \ref{t2} can be improved \cite{manth1, manth2}. The loss of $1/2$ in the exponent is typical when dealing with functionals with subquadratic growth. 
\subsection{General statements} Theorem \ref{t1} and Corollary \ref{t2} are special cases of a class of results covering general functionals of the type
\eqn{generale}
$$
 w\mapsto \mathcal{N}(w,\Omega):=\int_{\Omega}\big[F(x,Dw)+a(x)(\snr{Dw}^{2}+\bb^2)^{q/2}\big] \dx\,,
$$
with $F(\cdot)$ being arbitrarily close to have linear growth and $\bb \in [0,1]$. This time we are thinking of integrands of the type
\eqn{verylinear}
$$
\begin{cases}
\ F(x,z) \equiv \ccc(x)|z|L_{k+1}(|z|)  \ \  \mbox{for $k\geq 0$} \\
 \ L_{k+1}(|z|)=\log(1+L_{k}(|z|))  \ \  \mbox{for $k\geq 0$}\,, \quad 
L_{0}(|z|)= |z|\,,
\end{cases}
$$
where $\ccc(\cdot)$ is as in \rif{bound}$_2$. For situations like this, we prepare a more technical theorem. We consider continuous integrands $F\colon \Omega \times \er^n\to [0, \infty)$ such that $z \mapsto F(x, z) \in C^{2}(\er^n)$ for every $x\in \Omega$ and satisfying
\eqn{assif}
$$
\begin{cases}
\ \nu|z|g(z) \leq F(x,z) \leq L(|z|g(z) +1)\\
\ \displaystyle \frac{\nu |\xi|^2}{(|z|^2+1)^{\mu/2}}  \leq \langle \partial_{zz}F(x,z)\xi,\xi\rangle\, , \quad  |\partial_{zz}F(x,z)| \leq  \frac{Lg(|z|)}{(|z|^2+1)^{1/2}} \\
\ |\partial_zF(x,z)-\partial_z F(y,z)| \leq L \snr{x-y}^{\ao}g(|z|), 
\end{cases}
$$
for every choice of $x,y\in \Omega$, $z, \xi \in \er^n$, with $\ao \in (0,1)$, $1\leq \mu <3/2$ and $0 < \nu \leq 1 \le L$ being fixed constants. Here $g\colon [0, \infty) \to [1, \infty)$ is a non-decreasing, concave and unbounded function such that $ t\mapsto tg(t) $ is convex. Moreover, we assume that for every $\eps >0$ there exists a constant $c_{g}(\eps)$ such that
 \eqn{0.1}
 $$
 g(t) \leq c_{g}(\eps) t^{\eps}\quad \mbox{holds for every $t\ge 1$}\,.
$$
The above assumptions are sufficient to get Lipschitz continuity of minimizers. In order to get gradient local H\"older continuity we still need a technical one (see also Remark \ref{ogniv} below):
\eqn{assi3}
$$
|\partial_z F(x, z)| \leq L|z| \quad \mbox{holds for every $|z|\leq 1$\,.}
$$
For functionals as in \rif{generale} we have
\begin{theorem}\label{t3}
Let $u\in W^{1,1}_{\loc}(\Omega)$ be a local minimizer of the functional in \trif{generale} under assumptions \trif{bound}$_1$ and \trif{assif}-\trif{0.1}. There exists $\mum \in (1,3/2)$, depending only $n,q, \alpha, \alpha_0$, such that, if $1\leq \mu < \mum$, then 
\eqn{30bis}
$$
\nr{Du}_{L^{\infty}(B/2)}\le c\left(\mint_{B}\left[F(x, Du)+a(x)(\snr{Du}^2+\bb^2)^{q/2}\right] \dx\right)^{\vartheta} +c
$$
holds whenever $B\equiv B_{r}\Subset \Omega$, $r\leq 1$, where $c\equiv c(\data)$ and $\vartheta \equiv \vartheta (n,q, \alpha, \alpha_0)\linebreak >0$. Moreover, assuming also \trif{assi3} implies that $Du$ is locally H\"older continuous is $\Omega$. Finally, if in addition to all of the above we also assume that $\bb >0$ and that $\partial_{zz}F(\cdot)$ is continuous, then $u \in C^{1,\tilde \alpha/2}_{\loc}(\Omega)$, with $\tilde \alpha = \min\{\alpha, \alpha_0\}$.   \end{theorem}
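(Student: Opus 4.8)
The plan is to reduce everything, by a suitable regularization, to an \emph{a priori} estimate --- namely \rif{30bis} for smooth minimizers --- and then to prove that estimate via a scheme combining Caccioppoli-type inequalities, fractional Sobolev embedding and an iteration argument, all adapted to nearly linear growth. Concretely, I would mollify $F$ in the $z$ variable and $a(\cdot)$ in $x$ (preserving \rif{assif}-\rif{0.1} and the H\"older seminorm of $a$ up to universal constants), add a vanishing term $\eps(\snr{Dw}^2+1)^{q_0/2}$ with $q_0>q$ to gain coercivity and qualitative smoothness of the corresponding minimizers $u_\eps$, and then carry out the analysis on $u_\eps$. The crucial point --- in the spirit of Lipschitz bounds for nonuniformly elliptic integrals --- is that \rif{30bis} must be established for $u_\eps$ with a constant \emph{independent of $\eps$ and $q_0$}, since the regularizing term only enters in the ``good'' (convex) direction and must not be used quantitatively; one then lets $\eps\to0$, with $q_0$ fixed at the end, to recover $u$ and \rif{30bis}.

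For the a priori bound I would differentiate the Euler-Lagrange equation of the regularized functional in difference-quotient form, so as to accommodate the merely H\"older dependence of $\partial_zF$ on $x$ (via \rif{assif}$_3$) and of $a(\cdot)$: writing $\tau_{h,s}$ for the increment in the $e_s$-direction, one has $\int\langle\tau_{h,s}[\partial_zF(x,Du_\eps)+qa(x)(\snr{Du_\eps}^2+\bb^2)^{q/2-1}Du_\eps+\eps(\ldots)],D\varphi\rangle\dx=0$, and I would test with $\varphi=\eta^2\tau_{h,s}u_\eps\,\Phi(\snr{Du_\eps})$ for a suitable increasing weight $\Phi$ and cut-off $\eta$. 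Splitting each increment into a \emph{monotone} part --- estimated from below through the degenerate ellipticity $\nu\snr{\xi}^2(\snr{z}^2+1)^{-\mu/2}$ of \rif{assif}$_2$ --- and an \emph{$x$-part} --- bounded by $\snr{h}^{\ao}g(\snr{Du_\eps})$ and $\snr{h}^{\alpha}(\snr{Du_\eps}^2+\bb^2)^{q/2}$ respectively --- yields a Caccioppoli inequality for a nonlinear function $V_\mu(Du_\eps)$ of the gradient which, after division by $\snr{h}^{2\beta}$ and integration in $h$, gives $V_\mu(Du_\eps)\in W^{\beta,2}_{\loc}$ with a fractional order $\beta$ quantified by $\min\{\alpha,\ao\}$ and $\mu$. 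Fractional Sobolev embedding then improves the integrability of $Du_\eps$; reinserting this information and iterating, the bootstrap closes because $q<1+\alpha/n$ supplies the arithmetic room in the Sobolev exponents, while the concavity of $g$ together with \rif{0.1} keeps every constant and exponent uniformly controlled as the lower growth approaches the linear one --- this is exactly where the threshold $\mum(n,q,\alpha,\alpha_0)$ enters, as the range of $\mu$ for which $V_\mu$ stays ``close enough'' to $Du_\eps$ for the iteration to terminate. A final iteration on the super-level sets $\{\snr{Du_\eps}>k\}$, fed by the gained higher integrability, produces \rif{30bis} with $c\equiv c(\data)$ and $\vartheta\equiv\vartheta(n,q,\alpha,\alpha_0)$.

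With $Du\in L^\infty_{\loc}$ in hand, the remaining assertions follow along more classical lines. On any sub-ball where $\snr{Du}\le M$ the ellipticity ratio of $z\mapsto F(x,z)+a(x)(\snr{z}^2+\bb^2)^{q/2}$ is bounded by $c(M)$, so the problem becomes \emph{a posteriori uniformly elliptic}; here assumption \rif{assi3} furnishes the control of the vector field near $\{Du=0\}$, where $F(x,\cdot)$ degenerates. A standard excess-decay/Campanato iteration --- freezing the $x$-dependence, comparing $u_\eps$ with the solution of the frozen equation (which is $C^{1,\beta}$ by De Giorgi-Nash-Moser in the now uniformly elliptic regime), and absorbing the error through the H\"older continuity of $\partial_zF(\cdot,z)$ and of $a(\cdot)$ --- then gives $Du\in C^{0,\beta}_{\loc}$ for some $\beta>0$ after passing to the limit. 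Finally, if $\bb>0$ and $\partial_{zz}F$ is continuous, the integrand is genuinely non-degenerate and $C^2$ in $z$, hence uniformly elliptic on $\{\snr{z}\le M\}$; Schauder estimates for the frozen equation together with the same comparison scheme give H\"older continuity of $Du$ with exponent $\min\{\alpha,\ao\}$, except that the sub-quadratic character of the problem ($\mu<3/2$, i.e.\ growth lying between the linear and the quadratic one) forces the familiar loss of a factor $1/2$ when matching exponents in the iteration, whence $u\in C^{1,\tilde\alpha/2}_{\loc}$ with $\tilde\alpha=\min\{\alpha,\ao\}$.

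The main obstacle is the a priori bound of the first stage, and within it the demand that the constant in \rif{30bis} be \emph{stable as the regularization is removed}: the double phase machinery available for \rif{doppio} degenerates as its lower exponent tends to $1$, so it cannot simply be quoted, and the whole Caccioppoli/fractional/iteration argument must be rebuilt in a ``nearly linear calculus'' in which powers are systematically replaced by the concave function $g$ and \rif{0.1} is invoked to prevent any exponent or iteration length from blowing up. It is precisely the interplay of this calculus with the restriction $\mu<\mum(n,q,\alpha,\alpha_0)$ that allows the bootstrap to close.
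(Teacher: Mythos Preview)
Your overall architecture --- regularize, prove an a priori Lipschitz bound with constants independent of the regularization, pass to the limit, then upgrade to H\"older --- matches the paper. The execution of the a priori bound, however, differs substantially, and your sketch leaves open whether your route closes.

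The paper does \emph{not} test the differentiated Euler--Lagrange equation directly with $\tau_h u_\eps\,\Phi(|Du_\eps|)$. Instead it uses a freezing/comparison mechanism at the scale $|h|^{\beta_0}$: on each ball $B_h$ of that radius it solves the frozen Dirichlet problem to obtain a comparison map $v$, applies the autonomous Moser estimate (Proposition~\ref{fzfz}) to get a genuine Caccioppoli inequality for $E_{\textnormal{i}}(|Dv|)$, and transfers this to $u_\rho$ via a comparison estimate (Lemma~\ref{comparazione}) of size $|h|^{\beta_0\tilde\alpha}$. Patching over a lattice of such balls yields the fractional Caccioppoli inequality \rif{16} for the \emph{two-phase} quantity $E_{\omega,\sigma_\eps}(x,|Du_{\omega,\eps}|)\approx |Du_{\omega,\eps}|^{2-\mu}+a_{\sigma_\eps}(x)|Du_{\omega,\eps}|^q$, not for $V_\mu(Du_\eps)$ alone. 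This is essential: the error terms on the right of \rif{16} scale like $\rho^\alpha|Du|^{q-1+\delta}$ (from the oscillation of $a(\cdot)$) and cannot be reabsorbed using only the $F$-phase information encoded in $V_\mu$. The iteration is then carried out via the nonlinear-potential De Giorgi lemma (Lemma~\ref{revlem}), with multiplicative constants $M_0,M_j$ depending on the sought $L^\infty$-norm $\mathfrak{M}$ itself; these are reabsorbed at the end through Young's inequality and Lemma~\ref{l5}, and the arithmetic in \rif{thanks}--\rif{minori} is exactly what fixes $\mu_{\max}$ and makes $q<1+\alpha/n$ sharp. Your ``bootstrap closes because $q<1+\alpha/n$'' is asserted but not shown, and tracking only $V_\mu$ is unlikely to suffice.

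For the H\"older step there is a concrete gap. The claim that the problem becomes ``a posteriori uniformly elliptic'' once $|Du|\le M$ is false \emph{uniformly in the approximation}: the frozen integrand $F(x_c,z)+\bar a(|z|^2+\omega^2)^{q/2}$ has Hessian of order $\omega^{q-2}$ near $z=0$, so a na\"ive De Giorgi--Nash--Moser argument yields constants blowing up as $\omega\to 0$. The paper circumvents this via Proposition~\ref{tecnica}, which adapts the DiBenedetto/Kuusi--Mingione alternative for \emph{singular} equations (measure conditions on $\{D_sv<\lambda/2\}$, intrinsic rescaling, the dichotomy \rif{limitatio2}--\rif{decadi2}) to obtain the oscillation decay \rif{diminutio} with constants independent of $\bar a$ and $\omega$. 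This is the point where assumption~\rif{assi3} enters, and a standard Campanato comparison will not produce it.
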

The notion of local minimizer we are using in this paper is classical and prescribes that a function $u \in W^{1,1}_{\loc}(\Omega)$ is a local minimizer of $\mathcal{N}(\cdot)$ if, for every ball $B\Subset \Omega$, it happens that 
\eqn{defilocal}
$$
\begin{cases}\mbox{$\mathcal{N}(u,B)$ is finite}\\
\mbox{$\mathcal{N}(u,B)\leq \mathcal{N}(w,B)$ holds whenever $w-u\in W^{1,1}_0(B)$}\,.
 \end{cases}
 $$ This implies, in particular, that $\mathcal N(u, \Omega_0)$ is finite whenever $\Omega_0\Subset \Omega$ is an open subset. In Theorem \ref{t3}, we are using the shorthand notation 
\eqn{idati}
$$
\data :=(n,q,\nu, L,\alpha, \ao,\nr{a}_{C^{0,\alpha}}, \nr{\ccc}_{C^{0,\ao}}, c_{g}(\cdot))$$
that we adopt for the rest of the paper. For further notation we refer to Section \ref{preliminari} below. We just remark that, in order to simplify the reading, in the following we shall still denote by $c \equiv c (\data)$ a constant actually depending only on a subset of the parameters listed 
in \rif{idati}. 

Theorem \ref{t3} has a technical nature, especially as far as the size of $\mum$ is concerned, which is not allowed to be too far from one (a natural limitation in view of the condition $q < 1+\alpha/n$). This is anyway sufficient to cover the model examples considered here, including \rif{verylinear}; see Section \ref{dimos}. We note that in the autonomous, nearly linear case, the literature reports several interesting results for functionals $w \mapsto \int F(Dw)\dx$, satisfying \rif{assif}$_{1,2}$ with restrictions as $1\leq \mu < 1+\texttt{o}(n)$, $\texttt{o}(n) \to 0$ when $n\to \infty$. These bounds are in fact of the type we also need to impose when selecting $\mum$ (this can be checked by tracking the constant dependence in the proofs). Starting by \cite{FM} conditions \rif{assif}$_2$ have been used in several papers devoted to functionals with linear or nearly linear growth; see for instance \cite{bildhauer, BF, gmek2} and related references.  
\vspace{1mm}
\begin{remark}\label{ogniv} The technical assumption \rif{assi3} is automatically implied, for a different constant $L$, by $\partial_{z}F(x, 0_{\er^n})=0_{\er^n}$; this follows using the second inequality in \rif{assif}$_2$. In turn, when $F(\cdot)$ is independent of $x$ we can always assume that $\partial_{z}F(0_{\er^n})=0_{\er^n}$. Indeed, we can switch from $F(z)$ to $F(z)-\langle \partial_zF(0_{\er^n}), z\rangle$ observing that such a replacement in the integrand in \rif{generale} does not change the set of local minimizers. Assumption \rif{assi3} is anyway verified by the model cases we are considering and in fact it deals with the behaviour of $F(\cdot)$ at the origin, while in nonuniformly elliptic equations the main problems usually come from the behaviour at infinity.
\end{remark}

\subsection{Techniques} The techniques needed for Theorem \ref{t3} are entirely different from those used in the case of classical superlinear double phase functionals in \rif{doppio} \cite{BCM, CM} and for pointwise uniformly elliptic problems with nonstandard growth conditions \cite{HO1, HO2}. This is essentially due to the failure of uniform ellipticity \rif{finita} for functionals as $\mathcal L(\cdot)$ and $\mathcal N(\cdot)$. On the other hand, the techniques based on De Giorgi and Moser's methods   used in the literature to deal with nearly linear growth functionals \cite{bildhauer, BF2, FrS, FM} are not viable here. In fact, as in our case coefficients are H\"older continuous, it is impossible to differentiate the Euler-Lagrange equation to get gradient regularity. This is in fact the classical obstruction to direct regularity proofs one encounters when proving Schauder estimates, eventually leading to the use of perturbation methods. In turn, perturbation methods are again not working in the nonuniformly elliptic case as the available a priori estimates are not homogeneous and iteration schemes fail. Our approach is new to the context and relies on a number of ingredients, partially already introduced and exploited in \cite{piovra}. Most notably, we make a delicate use of nonlinear potential theoretic methods allowing to work under the sharp bound 
\eqn{doppiob2}
$$
q < 1+ \frac{\alpha}{n}\,.
$$
We employ a rather complex scheme of renormalized fractional Caccioppoli inequalities on level sets \rif{16}. These are aimed at replacing the standard Bernstein technique, and encode the essential data of the problem via a wide range of parameters; see \rif{ilbeta2}-\rif{sonot}. Bernstein technique is traditionally based on the fact that functions as $|Du|^\gamma$ are subsolutions to linear elliptic equations for certain positive values of $\gamma$. This follows differentiating the original equation, something that is impossible in our setting. We therefore use a fractional and anisotropic version of the method according to which certain nonlinear functions of the gradient, namely 
\eqn{intrinseca}
$$
E(x,Du) \approx |Du|^{2-\mu} + a(x)|Du|^q\,,
$$
belong to suitable fractional De Giorgi's classes, i.e., they satisfy inequalities \rif{16}. The term renormalized accounts for the fact that inequalities \rif{16} are homogeneous with respect to $E(\cdot,Du)$, despite the functional $\mathcal N(\cdot)$ is not. This costs the price of getting multiplicative constants depending on $\|Du\|_{L^{\infty}}$. Such constants must be carefully kept under control all over the proof and reabsorbed at the very end. Notice that inequalities \rif{16} are obtained via a dyadic/atomic decomposition technique, finding its roots in \cite{KM}, that resembles the one used for Besov spaces \cite{AH, triebel} in the setting of Littlewood-Paley theory; see Section \ref{hy2} and Remark \ref{atomicore}. Iterating inequalities \rif{16} via a potential theoretic version of De Giorgi's iteration (Lemma \ref{revlem}) then leads to establish local Lipschitz bounds in terms of the nonlinear potentials defined in \rif{defi-P}; see Proposition \ref{priora}. A crucial point is that the fractional nature of the inequalities \rif{16} allows to sharply quantify how the rate of H\"older continuity of coefficients interacts with the growth of the terms they stick to. Using this last very fact and combining the various parameters in \rif{ilbeta2}-\rif{sonot} leads to determine the optimal nonlinear potentials allowing to prove Lipschitz continuity of local minimizers under the sharp bound in \rif{doppiob2}, but no matter how small the exponent $\alpha_0$ is. Throughout the whole process, the structure of the functional $\mathcal N(\cdot)$ needs to be carefully exploited at every point. This reflects in the peculiar choice of \rif{intrinseca} as the leading quantity to iterate. Lipschitz regularity is as usual the focal point in nonuniformly elliptic problems, since once $Du$ is (locally) bounded condition \rif{finita} gets automatically satisfied when $z\equiv Du$. Indeed at this stage gradient H\"older continuity can be achieved by exploiting and extending some hidden facts from more classical regularity theory; see Section \ref{latecnica}. In particular, we shall use some of the iteration schemes employed in the proof of nonlinear potential estimates for singular parabolic equations presented in \cite{KuM}. The arguments developed for this point are general and can be used in other settings; see Remark \ref{mare}. We finally observe that in this paper we preferred to concentrate on model cases in order to highlight the main ideas. Nevertheless we believe that the approach included here can be exported to a variety of different settings where it might yield results that are either sharp or unachievable otherwise.

\section{Preliminaries}\label{preliminari}
\subsection{Notation}
In the following we denote by $c$ a general constant larger than~$1$. Different occurrences from line to line will be still denoted by $c$. Special occurrences will be denoted by $c_*,  \tilde c$ or likewise. Relevant dependencies on parameters will be as usual emphasized by putting them in parentheses. We denote by $ B_{r}(x_0):= \{x \in \er^n  :   |x-x_0|< r\}$ the open ball with center $x_0$ and radius $r>0$; we omit denoting the center when it is not necessary, i.e., $B \equiv B_{r} \equiv B_{r}(x_0)$; this especially happens when various balls in the same context share the same center. We also denote
$$
\mathcal B_{r} \equiv B_{r}(0):= \{x \in \er^n  :   |x|< r\}
$$ when the ball in question is centred at the origin. Finally, with $B$ being a given ball with radius $r$ and $\gamma$ being a positive number, we denote by $\gamma B$ the concentric ball with radius $\gamma r$ and by $B/\gamma \equiv (1/\gamma)B$. Moreover, $Q_{\textnormal{inn}}\equiv Q_{\textnormal{inn}}(B)$ denotes the inner hypercube of $B$, i.e., the largest hypercube, with sides parallel to the coordinate axes and concentric to $B$, that is contained in $B$, 
$Q_{\textnormal{inn}}(B)\subset B$. The sidelength of $Q_{\textnormal{inn}}(B)$ equals $2r/\sqrt{n}$. The vector valued version of function spaces like $L^p(\Omega), W^{1,p}(\Omega)$, i.e., when the maps considered take values in $\er^k$, $k\in \en$, will be denoted by $L^p(\Omega;\er^k), W^{1,p}(\Omega;\er^k)$;  when no ambiguity will arise we shall still denote $L^p(\Omega)\equiv L^p(\Omega;\er^k)$ and so forth. For the rest of the paper we shall keep the notation 
\eqn{laelle}
$$\ell_{\omega}(z):=\sqrt{\snr{z}^{2}+\omega^{2}}\,,$$
 for $z\in \mathbb{R}^{k}$, $k\ge 1$ and $\omega\in [0,1]$. In particular, when $k=1$, we have a function defined on $\er$. 
With $\mathcal U \subset \er^{n}$ being a measurable subset with bounded positive measure $0<|\mathcal U|<\infty$, and with $f \colon \mathcal U \to \er^{k}$, $k\geq 1$, being an integrable map, we denote  
$$
   (f)_{\mathcal U} \equiv \mint_{\mathcal U}  f(x) \dx  :=  \frac{1}{|\mathcal U|}\int_{\mathcal U}  f(x) \dx\,.
$$ 
Moreover, given a (scalar) function $f$ and a number $\kappa\in \er$, we denote
\eqn{trunca}
$$ (f-\kk)_{+}:=\max\{f-\kk,0\}\quad \mbox{and} \quad (f-\kk)_{-}:=\max\{\kk-f,0\}\,.$$
Finally, whenever $f\colon \mathcal U \to \er^k$ and $\mathcal U$ is any set, we define 
$$
\textnormal{\texttt{osc}}_{\mathcal U}\, f := \sup_{x,y\in \mathcal U}\,  \snr{f(x)-f(y)}\,.
$$
As usual, with $\beta\in (0,1]$ we denote
$$
\|f\|_{C^{0, \beta}( \mathcal U)}:= \|f\|_{L^{\infty}( \mathcal U)} + [f]_{0, \beta;\mathcal U}\,,\quad [f]_{0, \beta;\mathcal U}:=\sup_{x,y\in \mathcal U, x\not=y}\frac{\snr{f(x)-f(y)}}{\snr{x-y}^\beta}\,.
$$
\subsection{Auxiliary results}
We shall use the vector fields $V_{\omega,p}\colon \er^{n} \to  \er^{n}$, defined by
\eqn{vpvqm}
$$
V_{\omega,p}:= (|z|^{2}+\omega^{2})^{(p-2)/4}z, \qquad 0 < p < \infty\ \ \mbox{and}\ \ \omega\in [0,1]
$$
whenever $z \in \er^{n}$ (compare \cite[(2.1)]{ha} for such extended range of $p$). 
They satisfy
\eqn{Vm}
$$
\snr{V_{\omega,p}(z_{1})-V_{\omega,p}(z_{2})}\approx_{n,p} (\snr{z_{1}}^{2}+\snr{z_{2}}^{2}+\omega^{2})^{(p-2)/4}\snr{z_{1}-z_{2}}
$$
that holds for every $z_1, z_2 \in \er^n$, $\omega \in[0,1]$ and $p>0$; for this we refer to \cite[Lemma 2.1]{ha}. We shall also use the following, trivial
\eqn{l6} 
$$
 (\snr{z_{1}}^{2}+\snr{z_{2}}^{2}+\omega^{2})^{-\gamma/2} \lesssim   \int_{0}^{1}(\snr{z_{2}+\tau(z_{1}-z_{2})}^{2}+\omega^{2})^{-\gamma/2}\, d\tau $$
that holds for every $\gamma \geq 0$ and whenever $|z_1|+|z_2|>0$. 
 When restricting from above the range of $\gamma$, the inequality becomes double-sided, i.e.
\eqn{l60}
$$
 (\snr{z_{1}}^{2}+\snr{z_{2}}^{2}+\omega^{2})^{-\gamma/2} \approx_{n,\gamma} \int_{0}^{1}(\snr{z_{2}+\tau (z_{1}-z_{2})}^{2}+\omega^{2})^{-\gamma/2}\, d\tau\,, 
$$
that instead holds for every $\gamma <1$; see \cite{ha}, \cite[Section 2]{giamod}. With $B \subset \Omega$ being a ball and $\sigma \in (0,1)$, we let 
\eqn{aii}
$$
\begin{cases}
a_{\sigma}(x):= a(x)+\sigma\\
a_{\textnormal{i}}(B):=\inf_{x\in B}a(x)\ \  \mbox{and}\ \  a_{\sigma, \textnormal{i}}(B):=a_{\textnormal{i}}(B)+\sigma 
\end{cases}
$$
and define
\eqn{vvv}
$$
\begin{cases}
\mathcal{V}_{\omega, \sigma}^{2}(x,z_{1},z_{2}) := \snr{V_{1,2-\mu}(z_{1})-V_{1,2-\mu}(z_{2})}^{2}\\  \hspace{29.7mm} +a_{\sigma}(x)\snr{V_{\omega,q}(z_{1})-V_{\omega,q}(z_{2})}^{2}\\
\mathcal{V}_{\omega,\sigma, \textnormal{i}}^{2}(z_{1},z_{2};B) :=\snr{V_{1,2-\mu}(z_{1})-V_{1,2-\mu}(z_{2})}^{2}\\  \hspace{30mm} 
+a_{\sigma,\textnormal{i}}(B)\snr{V_{\omega,q}(z_{1})-V_{\omega,q}(z_{2})}^{2}
\end{cases}
$$
where $z_{1},z_{2}\in \mathbb{R}^{n}$, $x\in \Omega$. A consequence of assumptions \rif{assif} is the following:
\begin{lemma}\label{marclemma} Let $F\colon \Omega\times \er^n\to [0, \infty)$ be as in \trif{assif}. Then 
$|\partial_z F(x,z)| \leq cg(|z|)$ holds for every $(x,z)\in \Omega$, where $c$ depends on $L$ and the constant $c_{g}(1)$ defined in \trif{0.1}. 
\end{lemma}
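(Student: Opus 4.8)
The plan is to obtain the bound purely from convexity of $z\mapsto F(x,z)$ and the upper growth bound in \rif{assif}$_1$, by comparing the values of $F$ at the point $z$ and at a point whose distance from $z$ is chosen \emph{comparable to $\snr{z}$} rather than fixed. Making the comparison at scale $\approx\snr{z}$ is the only real idea here: a comparison at a fixed scale, or a radial integration of $\partial_{zz}F$ from the origin, would merely give the much weaker (and here useless) estimate $\snr{\partial_zF(x,z)}\lesssim\snr{z}g(\snr{z})$ or an extra logarithmic factor, while scaling with $\snr{z}$ exactly cancels the linear factor.

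Concretely, I would proceed as follows. First, for each fixed $x\in\Omega$ the function $z\mapsto F(x,z)$ is convex and of class $C^2$: convexity follows from the first inequality in \rif{assif}$_2$, which forces $\partial_{zz}F(x,z)$ to be positive definite for every $z$ (only positivity, not the size of $\nu$, is used, consistently with the asserted dependence of the constant). Second, the elementary inequality for nonnegative convex functions: if $\partial_zF(x,z)\neq 0$, testing the supporting-hyperplane inequality $F(x,y)\ge F(x,z)+\langle\partial_zF(x,z),y-z\rangle$ at $y=z+r\,\partial_zF(x,z)/\snr{\partial_zF(x,z)}$ and using $F\ge 0$ gives, for every $r>0$,
\[
r\,\snr{\partial_zF(x,z)}\ \le\ F\!\left(x,\,z+r\,\frac{\partial_zF(x,z)}{\snr{\partial_zF(x,z)}}\right)-F(x,z)\ \le\ \sup_{\overline{B_r(z)}}F(x,\cdot)\,.
\]
Third, I would insert $F(x,w)\le L(\snr{w}g(\snr{w})+1)$ from \rif{assif}$_1$ and choose $r:=\max\{\snr{z},1\}$, so that $\snr{w}\le 2\max\{\snr{z},1\}$ on $\overline{B_r(z)}$; this yields $\snr{\partial_zF(x,z)}\le 2Lg(2\snr{z})+L$ when $\snr{z}\ge 1$, and $\snr{\partial_zF(x,z)}\le L(2g(2)+1)$ when $\snr{z}\le 1$. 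Finally I would absorb constants using the doubling property $g(2t)\le 2g(t)$ (immediate from concavity of $g$ and $g(0)\ge 1$), the bound $g(2)\le 2g(1)\le 2c_g(1)$ (concavity and \rif{0.1} with $\eps=1$), and $g(\snr{z})\ge 1$, arriving at $\snr{\partial_zF(x,z)}\le c\,g(\snr{z})$ with $c=c(L,c_g(1))$, uniformly in $x$.

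There is no serious obstacle: the statement is essentially the classical fact that the gradient of a nonnegative convex function at $z$ is controlled by its oscillation on a ball centred at $z$, and the only genuine point is to take the radius of that ball comparable to $\snr{z}$. The one place needing a line of care is the regime $\snr{z}\le 1$, where the growth bound produces an absolute constant that has to be rewritten as a multiple of $g(\snr{z})$ by invoking $g\ge 1$; the monotonicity and concavity of $g$, together with \rif{0.1}, then convert every occurrence of $g$ at a doubled or unit argument into $g(\snr{z})$ plus the stated constant.
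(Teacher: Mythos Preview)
Your argument is correct and is essentially the same as the paper's, which invokes \cite[Lemma 2.1]{M1}: both bound the gradient via convexity and the upper growth in \rif{assif}$_1$ by comparing at distance of order $\snr{z}$, and then reduce $g$ at the shifted argument back to $g(\snr{z})$ using concavity. The only cosmetic differences are that the paper tests along coordinate directions with step $h=1+\snr{z}$ and uses sublinearity of $g$, while you test in the gradient direction with $r=\max\{\snr{z},1\}$ and use the doubling bound $g(2t)\le 2g(t)$.
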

\begin{proof}
This is a version of \cite[Lemma 2.1]{M1} and the proof mimics that of Marcellini. Specifically, there we take $|h|=1 +|z|$ and, after using \rif{assif}$_1$ as in \cite{M1}, we estimate $g(|z\pm he_i|)\leq g(|z|+h)\leq g(|z|)+g(h)\leq 2 g(|z|)+g(1) \leq cg(|z|)$ (here $\{e_k\}$ denotes the standard basis of $\er^n$). Here we have used that $g(\cdot)\geq 1$ is non-decreasing and sublinear (since it is concave and non-negative) and finally \rif{0.1} to get $g(1)\leq c_g(1)\leq g (|z|)$. 
\end{proof}
Finally, a classical iteration lemma \cite[Lemma 6.1]{giu}.
\begin{lemma} \label{l5} Let $h\colon [t,s]\to \mathbb{R}$ be a non-negative and bounded function, and let $a,b, \gamma$ be non-negative numbers. Assume that the inequality 
$ 
h(\tau_1)\le  (1/2) h(\tau_2)+(\tau_2-\tau_1)^{-\gamma}a+b,
$
holds whenever $t\le \tau_1<\tau_2\le s$. Then $
h(t)\le c( \gamma)[a(s-t)^{-\gamma}+b]
$, holds too. 
\end{lemma}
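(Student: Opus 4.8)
The plan is to run the classical ``hole-filling'' iteration: I would apply the assumed inequality repeatedly along a geometric sequence of points running from $t$ towards $s$, chosen so that the gain factor $1/2$ in front of $h(\tau_2)$ beats the blow-up of the weight $(\tau_2-\tau_1)^{-\gamma}$. This is entirely standard, and the proof is short enough that the plan essentially is the proof; I record the steps and the one place needing (minor) care.

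Concretely, I would fix $\lambda:=2^{-1/(\gamma+1)}\in(0,1)$, which satisfies $2\lambda^{\gamma}>1$, hence $2^{-1}\lambda^{-\gamma}<1$, for every fixed $\gamma\geq 0$. Then set $t_0:=t$ and $t_{i+1}:=t_i+(1-\lambda)\lambda^{i}(s-t)$ for $i\geq 0$, so that $t_i=t+(1-\lambda^{i})(s-t)\in[t,s]$, $t_i\uparrow s$, and $t_{i+1}-t_i=(1-\lambda)\lambda^{i}(s-t)$. Since all the $t_i$ lie in $[t,s]$, the hypothesis applies with $\tau_1=t_i$, $\tau_2=t_{i+1}$ and gives
\[
h(t_i)\leq \tfrac12\, h(t_{i+1})+\big[(1-\lambda)(s-t)\big]^{-\gamma}\lambda^{-i\gamma}\,a+b\,.
\]
Iterating this bound $k$ times and using $h\geq 0$ yields
\[
h(t)=h(t_0)\leq 2^{-k}h(t_k)+\sum_{i=0}^{k-1}2^{-i}\Big(\big[(1-\lambda)(s-t)\big]^{-\gamma}\lambda^{-i\gamma}a+b\Big)\,.
\]
Letting $k\to\infty$, the boundedness of $h$ kills the term $2^{-k}h(t_k)$, while $\sum_{i\geq 0}2^{-i}=2$ and the series $\sum_{i\geq 0}(2^{-1}\lambda^{-\gamma})^{i}=(1-2^{-1}\lambda^{-\gamma})^{-1}$ converges precisely because of the choice of $\lambda$. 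One thus obtains $h(t)\leq c(\gamma)\big[a(s-t)^{-\gamma}+b\big]$ with, say, $c(\gamma):=\max\{(1-\lambda)^{-\gamma}(1-2^{-1}\lambda^{-\gamma})^{-1},\,2\}$, depending only on $\gamma$ through the fixed value of $\lambda$.

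There is no genuine obstacle here. The only two points that deserve attention are the explicit choice of $\lambda$ guaranteeing summability of the geometric series assembled from the weights $\lambda^{-i\gamma}$, and the appeal to the boundedness of $h$, which is exactly what legitimises discarding the tail term $2^{-k}h(t_k)$ as $k\to\infty$ (without it the conclusion would be vacuous).
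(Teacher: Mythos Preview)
Your proof is correct and is precisely the standard hole-filling iteration argument; the paper does not supply its own proof but cites \cite[Lemma~6.1]{giu}, where exactly this approach is used. The only cosmetic point is that with your choice $\lambda=2^{-1/(\gamma+1)}$ one has $2^{-1}\lambda^{-\gamma}=\lambda$, so your constant simplifies to $c(\gamma)=\max\{(1-\lambda)^{-\gamma-1},2\}$.
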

\subsection{Fractional spaces} 
The finite difference operator $\tau_{h}\colon L^1(\Omega;\er^{k}) \to L^{1}(\Omega_{|h|};\er^k)$ is defined as
$\tau_{h}w(x):=w(x+h)-w(x)$ for $x\in \Omega_{\snr{h}}$, for every $\tau \in L^1(\Omega)$ and 
where $\Omega_{|h|}:=\{x \in \Omega \, : \, 
\dist(x, \partial \Omega) > |h|\}$. 
With $\beta \in (0,1)$, $s \in [1, \infty)$, $k \in \en$, $n \geq 2$, and with $\Omega \subset \er^n$ being an open subset, the space $W^{\beta ,s}(\Omega;\er^k )$ is defined as the set of maps $w \colon \Omega \to \er^k$ such that
\begin{flalign*}
\notag
\| w \|_{W^{\beta ,s}(\Omega;\er^k )} & := \|w\|_{L^s(\Omega;\er^k)}+ \left(\int_{\Omega} \int_{\Omega}  
\frac{\snr{w(x)
- w(y)}^{s}}{\snr{x-y}^{n+\beta s}} \dx \dy \right)^{1/s}\\
&=: \|w\|_{L^s(\Omega;\er^k)} + [w]_{\beta, s;\Omega} < \infty\,.\label{gaglia}
\end{flalign*}
When considering regular domains, as for instance the ball $\mathcal B_{1/2}$ (this is the only case we are going to consider here in this respect), the  embedding inequality
\eqn{immersione}
$$
\nr{w}_{L^{\frac{ns}{n-s\beta}}(\mathcal B_{1/2};\er^k)}\leq c(n,s,\beta)\nr{w}_{W^{\beta,s}(\mathcal B_{1/2};\er^k)}
$$
holds provided $s\geq 1, \beta \in (0,1)$, $k\in \N$ and $s\beta<n$. For this and basic results concerning Fractional Sobolev spaces we refer to \cite{guide} and related references. A characterization of fractional Sobolev spaces via finite difference is contained in the following lemma, that can be retrieved for instance from \cite{dm}.
\begin{lemma}\label{l4}
Let $B_{\varrho} \Subset B_{r}\subset \er^n$ be concentric balls with $r\leq 1$, $w\in L^{s}(B_{r};\er^k)$, $s\geq 1$ and assume that, for $\alpha_* \in (0,1]$, $S\ge 1$, there holds
$$
\nr{\tau_{h}w}_{L^{s}(B_{\rr};\er^k)}\le S\snr{h}^{\alpha_* } \  \mbox{
for every $h\in \mathbb{R}^{n}$ with $0<\snr{h}\le \frac{r-\rr}{K}$, where $K \geq 1$}\,.
$$
Then
\begin{flalign}
\notag \nr{w}_{W^{\beta,s}(B_{\rr};\er^k)} &\le\frac{c}{(\alpha_* -\beta)^{1/s}}
\left(\frac{r-\rr}{K}\right)^{\alpha_* -\beta}S\\
& \quad +c\left(\frac{K}{r-\rr}\right)^{n/s+\beta} \nr{w}_{L^{s}(B_{r};\er^k)} \label{cru2}
\end{flalign}
holds for every $\beta < \alpha_*$, with $c\equiv c(n,s)$. 
\end{lemma}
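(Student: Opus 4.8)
The statement is the classical finite-difference characterization of fractional Sobolev regularity, so the plan is a direct splitting of the Gagliardo seminorm into a near-diagonal and a far-diagonal part. Writing $\lambda:=(r-\rr)/K$ for brevity, the hypothesis says that $\nr{\tau_{h}w}_{L^{s}(B_{\rr};\er^k)}\le S\snr{h}^{\alpha_*}$ for every $0<\snr{h}\le\lambda$, and I would record at once that $\lambda\le 1$, since $K\ge 1$ and $r\le 1$. Starting from
$$
[w]_{\beta,s;B_{\rr}}^{s}=\int_{B_{\rr}}\int_{B_{\rr}}\frac{\snr{w(x)-w(y)}^{s}}{\snr{x-y}^{n+\beta s}}\dx\dy\,,
$$
I would substitute $y=x+h$ and use Tonelli to integrate first in $x$; since $x,y\in B_{\rr}$ forces $\snr{h}<2\rr$, splitting the remaining $h$-integral according to $\snr{h}\le\lambda$ or $\lambda<\snr{h}<2\rr$ produces $[w]_{\beta,s;B_{\rr}}^{s}=\mathrm{I}+\mathrm{II}$.

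For the near-diagonal term $\mathrm{I}$, for each fixed $h$ with $\snr{h}\le\lambda$ the set $\{x\in B_{\rr}:x+h\in B_{\rr}\}$ is contained in $B_{\rr}$ and, because $\snr{h}\le\lambda\le r-\rr$, the translate $w(\cdot+h)$ is defined on the whole of $B_{\rr}$, so that the inner integral is at most $\nr{\tau_{h}w}_{L^{s}(B_{\rr};\er^k)}^{s}\le S^{s}\snr{h}^{\alpha_* s}$ by assumption. Hence
$$
\mathrm{I}\le S^{s}\int_{\{\snr{h}\le\lambda\}}\snr{h}^{(\alpha_*-\beta)s-n}\,dh=\frac{c(n)}{(\alpha_*-\beta)s}\,S^{s}\lambda^{(\alpha_*-\beta)s}\,,
$$
where the passage to polar coordinates gives a convergent radial integral precisely because $\beta<\alpha_*$, and the factor $(\alpha_*-\beta)^{-1}$ here is exactly the one appearing in \rif{cru2}. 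Taking $s$-th roots, $\mathrm{I}^{1/s}\le c(n,s)(\alpha_*-\beta)^{-1/s}S\lambda^{\alpha_*-\beta}$.

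For the far-diagonal term $\mathrm{II}$ I would simply use $\snr{h}^{-(n+\beta s)}\le\lambda^{-(n+\beta s)}$ on $\{\lambda<\snr{h}<2\rr\}$, the elementary inequality $\snr{w(x)-w(x+h)}^{s}\le 2^{s-1}(\snr{w(x)}^{s}+\snr{w(x+h)}^{s})$, and $\snr{\{\snr{h}<2\rr\}}\le c(n)\rr^{n}\le c(n)$ (here $r\le 1$ enters), which gives $\mathrm{II}\le c(n,s)\lambda^{-(n+\beta s)}\nr{w}_{L^{s}(B_{\rr};\er^k)}^{s}$ and thus $\mathrm{II}^{1/s}\le c(n,s)\lambda^{-(n/s+\beta)}\nr{w}_{L^{s}(B_{r};\er^k)}$. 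Finally, since $\lambda\le 1$ one also has $\nr{w}_{L^{s}(B_{\rr};\er^k)}\le\lambda^{-(n/s+\beta)}\nr{w}_{L^{s}(B_{r};\er^k)}$, so that combining this with the bounds for $\mathrm{I}^{1/s}$ and $\mathrm{II}^{1/s}$ yields \rif{cru2}. I do not expect a real obstacle: the computation is routine, and the only points that need care are checking that the finite-difference hypothesis is legitimately applicable on all of $B_{\rr}$ for $\snr{h}\le\lambda$ — so that the $L^{s}$-bound on $\tau_{h}w$ may actually be inserted into $\mathrm{I}$ — and the systematic use of the normalization $r\le 1$ to absorb the various measure factors and the leftover low-order term.
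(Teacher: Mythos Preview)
Your argument is correct and is exactly the standard near-diagonal/far-diagonal splitting of the Gagliardo seminorm that one expects here. The paper does not provide its own proof of this lemma: it simply states the result and refers to \cite{dm} for a proof, so there is nothing further to compare your approach against.
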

\subsection{Nonlinear potentials}\label{losec} We shall use a family of nonlinear potentials that, in their essence, goes  back to the fundamental work of Havin \& Mazya \cite{HM}. We have recently used such potentials in the setting of nonuniformly elliptic problems in \cite{BM, camel, camel2, ciccio, piovra} and in fact we shall in particular use  \cite[Section 4]{piovra} as a main source of tools in this respect. With $t,\sigma>0$, $m, \theta\geq 0$ being fixed parameters, and $f\in L^{1}(B_{r}(x_{0}))$ being such that $\snr{f}^{m} \in L^{1}(B_{r}(x_0))$, where $B_{r}(x_0)\subset \er^n$ is a ball, the nonlinear Havin-Mazya-Wolff type potential ${\bf P}_{t,\sigma}^{m,\theta}(f;\cdot)$ is defined by 
\eqn{defi-P} 
$$
{\bf P}_{t,\sigma}^{m,\theta}(f;x_0,r) := \int_0^r \varrho^{\sigma} \left(  \mint_{B_{\varrho}(x_0)} \snr{f}^{m} \dx \right)^{\theta/t} \frac{\d\varrho}{\varrho} \,.
$$
Suitable integrability properties of $f$ guarantee that ${\bf P}_{t,\sigma}^{m,\theta}(f; \cdot)$ is bounded, as stated in 
\begin{lemma}\label{crit}
Let $t,\sigma,\theta>0$ be numbers such that
\eqn{required}
$$
 n\theta>t\sigma\,.
$$ Let 
$B_{\rr}\subset B_{\rr+r_{0}}\subset \mathbb{R}^{n}$ be concentric balls with $\rr, r_0\leq1$ and $f\in L^{1}(B_{\rr+r_{0}})$ be a function such that $\snr{f}^{m}\in L^{1}(B_{\rr+r_{0}})$, for some $m>0$. Then
\eqn{stimazza}
$$
\nr{\mathbf{P}^{m,\theta}_{t,\sigma}(\cdot,r_{0})}_{L^{\infty}(B_{\rr})}\le c\|f\|_{L^{\gamma}(B_{\rr}+r_0)}^{m\theta/t} \quad \mbox{holds for every $\gamma >\frac{nm\theta}{t\sigma}>0$}
$$
and with $c\equiv c(n,t,\sigma,m,\theta, \gamma)$. 
\end{lemma}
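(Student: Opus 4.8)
The plan is to prove Lemma \ref{crit} by bounding the potential integral directly, splitting the radial integration at a suitable threshold and using H\"older's inequality on the inner averages. First I would fix $x\in B_{\rr}$ and $\varrho\in(0,r_0]$, and estimate the inner average $\mint_{B_{\varrho}(x)}\snr{f}^m\dx$ by H\"older's inequality with exponent $\gamma/m>1$ (legitimate since $\gamma>nm\theta/(t\sigma)>m$, the last inequality being a consequence of \rif{required}): this gives
$$
\left(\mint_{B_{\varrho}(x)}\snr{f}^m\dx\right)^{\theta/t}\le c\,\varrho^{-n m\theta/(t\gamma)}\nr{f}_{L^{\gamma}(B_{\rr+r_0})}^{m\theta/t}\,.
$$
Plugging this into \rif{defi-P}, the potential is bounded by $c\,\nr{f}_{L^{\gamma}(B_{\rr+r_0})}^{m\theta/t}\int_0^{r_0}\varrho^{\sigma-n m\theta/(t\gamma)}\,\D\varrho/\varrho$, and the radial integral converges precisely because the chosen range $\gamma>nm\theta/(t\sigma)$ forces the exponent $\sigma-nm\theta/(t\gamma)$ to be strictly positive; since $r_0\le1$ the integral is bounded by a constant depending on $n,t,\sigma,m,\theta,\gamma$. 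Taking the supremum over $x\in B_{\rr}$ then yields \rif{stimazza}.

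The only point requiring a little care is the interplay of the hypotheses: one must check that the condition $\gamma>nm\theta/(t\sigma)$ in the statement is compatible with $\gamma\ge m$ (so that H\"older with exponent $\gamma/m$ makes sense) and that it is exactly the condition making the $\varrho$-integral finite — and here assumption \rif{required}, namely $n\theta>t\sigma$, is what guarantees $nm\theta/(t\sigma)>m$, so that the admissible range for $\gamma$ is nonempty and automatically lies in $[m,\infty)$. I would also remark that $f\in L^{\gamma}(B_{\rr+r_0})$ is implicitly assumed on the right-hand side of \rif{stimazza}; if it is infinite the bound is trivial, so one may assume finiteness. This is the main (and essentially only) obstacle, and it is a bookkeeping matter rather than a genuine difficulty; the rest is a one-line computation once the splitting and H\"older step are set up. No De Giorgi iteration, covering argument, or fractional-space machinery is needed for this particular lemma — it is a pure potential-estimate ingredient to be used later in Proposition \ref{priora}.
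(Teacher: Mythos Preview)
Your argument is correct and is exactly the standard direct computation: H\"older on the inner average with exponent $\gamma/m>1$ (legitimate since \rif{required} forces $nm\theta/(t\sigma)>m$), followed by integration in $\varrho$ of $\varrho^{\sigma-nm\theta/(t\gamma)-1}$, which converges precisely when $\gamma>nm\theta/(t\sigma)$. The paper does not give its own proof of Lemma~\ref{crit}; it is stated there as a tool drawn from \cite[Section~4]{piovra}, and the argument in that reference is the same one-line H\"older computation you wrote. One cosmetic remark: your opening sentence mentions ``splitting the radial integration at a suitable threshold'', but you never actually split and you do not need to --- the single H\"older step already handles the whole range $(0,r_0]$ since $r_0\le 1$. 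You can drop that phrase.
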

Note that in \rif{stimazza} we are not requiring that $\gamma \geq 1$ and any value $\gamma>0$ is allowed.  
Next lemma is a version of \cite[Lemma 2.3]{piovra} and takes information from various results from Nonlinear Potential Theory starting by the seminal paper \cite{kilp}. It is essentially a nonlinear potential theoretic version of De Giorgi's iteration. Notice that the very crucial point in the next statement is the tracking, in the various inequalities, of the explicit dependence on the constants $M_0, M_i$. 
\begin{lemma}\label{revlem}
Let $B_{r_{0}}(x_{0})\subset \mathbb{R}^{n}$ be a ball, $n\ge 2$, and, for $j \in \{1, 2,3\}$, consider functions $f_j$, 
$
|f_j|^{m_j} \in L^1(B_{2r_0}(x_{0}))$, and constants $\chi >1$, $\sigma_j, m_j, \theta_j>0$ and $c_*,M_0 >0$,  $\kappa_0, M_j\geq 0$. Assume that $w \in L^2(B_{r_0}(x_0))$ is such that for all $\kk\ge \kk_{0}$, and for every ball $B_{\rr}(x_{0})\subseteq B_{r_{0}}(x_{0})$, the inequality
\begin{flalign}
\notag  \left(\mint_{B_{\rr/2}(x_{0})}(w-\kk)_{+}^{2\chi}  \dx\right)^{1/\chi}  &\le c_{*}M_{0}^{2}\mint_{B_{\rr}(x_{0})}(w-\kk)_{+}^{2}  \dx\\
 & \qquad +c_{*}\sum_{j=1}^{3} M_j^{2}\rr^{2\sigma_j}\left(\mint_{B_{\rr}(x_{0})}\snr{f_j}^{m_j}  \dx\right)^{\theta_j}
 \label{revva}
\end{flalign}
holds (recall the notation in \trif{trunca}). If $x_{0}$ is a Lebesgue point of $w$, then
\begin{flalign}\label{siapplica}
 \notag w(x_{0})  & \le\kk_{0}+cM_{0}^{\frac{\chi}{\chi-1}}\left(\mint_{B_{r_{0}}(x_{0})}(w-\kk_{0})_{+}^{2}  \dx\right)^{1/2}\\
 &\qquad 
+cM_{0}^{\frac{1}{\chi-1}}\sum_{j=1}^{3} M_j\mathbf{P}^{m_j,\theta_j}_{2,\sigma_j}(f_j;x_{0},2r_{0})
\end{flalign}
holds with $c\equiv c(n,\chi,\sigma_j,\theta_j,c_{*})$.  
\end{lemma}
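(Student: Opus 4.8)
To prove Lemma \ref{revlem} the plan is to run a De Giorgi type iteration centred at the Lebesgue point $x_{0}$, upgraded so as to carry along the nonlinear potential contributions; this is essentially the scheme of \cite[Lemma 2.3]{piovra}, ultimately rooted in \cite{kilp, KM}. One may assume that $\mathbf{P}^{m_{i},\theta_{i}}_{2,\sigma_{i}}(f_{i};x_{0},2r_{0})<\infty$ for every $i\in\{1,2,3\}$, otherwise \rif{siapplica} is void. First I fix a free parameter $\delta>0$, to be optimized only at the very end, and for integers $j\ge 0$ I set $B_{j}:=B_{r_{0}/2^{j}}(x_{0})$ and $\kk_{j}:=\kk_{0}+\delta(1-2^{-j})$, so that $\kk_{j}\ge \kk_{0}$ and $\kk_{j}\uparrow \kk_{0}+\delta$, together with
\[
A_{j}:=\left(\mint_{B_{j}}(w-\kk_{j})_{+}^{2}\dx\right)^{1/2}\,,\qquad s_{j}:=\sum_{i=1}^{3}M_{i}\left(\tfrac{r_{0}}{2^{j}}\right)^{\sigma_{i}}\left(\mint_{B_{j}}\snr{f_{i}}^{m_{i}}\dx\right)^{\theta_{i}/2}\,.
\]

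The first step is the recursive inequality. I apply \rif{revva} with $\kk=\kk_{j}$ and $\varrho=r_{0}/2^{j}$, noting that then $B_{\varrho/2}=B_{j+1}$ exactly. Since $(w-\kk_{j+1})_{+}\le (w-\kk_{j})_{+}$, Hölder's inequality on $B_{j+1}$ with exponents $(\chi,\chi/(\chi-1))$ bounds $A_{j+1}^{2}$ by the product of $(\mint_{B_{j+1}}(w-\kk_{j})_{+}^{2\chi}\dx)^{1/\chi}$ and $(\snr{\{w>\kk_{j+1}\}\cap B_{j+1}}/\snr{B_{j+1}})^{1-1/\chi}$; the first factor is controlled by the right-hand side of \rif{revva}, and Chebyshev's inequality, together with $\kk_{j+1}-\kk_{j}=\delta 2^{-j-1}$, controls the second factor by $(c(n)\delta^{-2}4^{j}A_{j}^{2})^{1-1/\chi}$. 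Writing $\lambda:=1-1/\chi\in(0,1)$ and using $\sum_{i}M_{i}^{2}(r_{0}/2^{j})^{2\sigma_{i}}(\mint_{B_{j}}\snr{f_{i}}^{m_{i}}\dx)^{\theta_{i}}\le s_{j}^{2}$, I arrive at
\[
A_{j+1}^{2}\le \bar c\, 4^{\lambda j}\,\delta^{-2\lambda}\left(M_{0}^{2}A_{j}^{2}+s_{j}^{2}\right)A_{j}^{2\lambda}\,,\qquad \bar c\equiv\bar c(n,\chi,c_{*})\,.
\]
In parallel, since $\varrho\mapsto\mint_{B_{\varrho}(x_{0})}\snr{f_{i}}^{m_{i}}\dx$ changes by at most a dimensional factor along each dyadic annulus, comparing the dyadic sum with the integral in \rif{defi-P} gives $\sum_{j\ge 0}s_{j}\le c\,\mathcal{P}$ with $\mathcal{P}:=\sum_{i=1}^{3}M_{i}\,\mathbf{P}^{m_{i},\theta_{i}}_{2,\sigma_{i}}(f_{i};x_{0},2r_{0})$; in particular $\{s_{j}\}$ is summable and the tails $\Sigma_{j}:=\sum_{k\ge j}s_{k}$ decrease to $0$.

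The heart of the matter is to iterate the displayed recursion while choosing $\delta:=c_{1}M_{0}^{\chi/(\chi-1)}A_{0}+c_{1}M_{0}^{1/(\chi-1)}\mathcal{P}$, for a suitable large $c_{1}\equiv c_{1}(n,\chi,\sigma_{i},\theta_{i},c_{*})$, so that $A_{j}\to 0$. The origin of the two exponents of $M_{0}$ is transparent in the ``principal'' regime $A_{j}\approx\Theta\gamma^{-j}$: plugging this into the leading term $\bar c\,4^{\lambda j}\delta^{-2\lambda}M_{0}^{2}A_{j}^{2+2\lambda}$ and demanding the output be again $\le \Theta^{2}\gamma^{-2(j+1)}$ forces, once $\gamma$ is a fixed constant $\ge 2$, the relation $\Theta\approx\delta M_{0}^{-\chi/(\chi-1)}$, so that the base case $A_{0}\le\Theta$ is precisely $\delta\gtrsim M_{0}^{\chi/(\chi-1)}A_{0}$, and the powers $\chi/(\chi-1)$ and $1/(\chi-1)=\chi/(\chi-1)-1$ are what surface. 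The main obstacle is that the error coefficients $s_{j}^{2}$ do \emph{not} decay term by term — they are controlled only after summation, by $\sum_{j}s_{j}\le c\mathcal{P}$. This is handled by the standard but delicate device of enriching the inductive hypothesis with the tail $\Sigma_{j}$, in the form $A_{j}\lesssim\Theta\gamma^{-j}+M_{0}^{1/(\chi-1)}\Sigma_{j}$, equivalently by allowing the level increments $\kk_{j+1}-\kk_{j}$ to be partly geometric and partly proportional to $s_{j}$ (with total still $\delta$), so that the error produced at step $j$ is absorbed into the gap $\Sigma_{j}-\Sigma_{j+1}$ and the De Giorgi scheme terminates; this is exactly the mechanism of \cite{kilp, KM} and \cite[Lemma 2.3]{piovra}, the only additional care being the bookkeeping of the multiplicative constants $M_{0}, M_{j}$ — which is precisely the information the statement records. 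Taking $c_{1}$ large in terms of $n,\chi$, the $\sigma_{i},\theta_{i}$'s and $c_{*}$ makes the induction close.

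Finally, since $(w-\kk_{j})_{+}\ge (w-\kk_{0}-\delta)_{+}$, the Lebesgue point property of $x_{0}$ gives $\liminf_{j}A_{j}^{2}\ge (w(x_{0})-\kk_{0}-\delta)_{+}^{2}$, so $A_{j}\to 0$ forces $w(x_{0})\le\kk_{0}+\delta$; recalling that $A_{0}=(\mint_{B_{r_{0}}(x_{0})}(w-\kk_{0})_{+}^{2}\dx)^{1/2}$ and the chosen value of $\delta$, this is \rif{siapplica}, with $c$ depending only on $n,\chi$, the $\sigma_{i},\theta_{i}$'s and $c_{*}$.
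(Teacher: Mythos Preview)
The paper does not give its own proof of Lemma~\ref{revlem}; it merely records it as a version of \cite[Lemma~2.3]{piovra}, pointing back to the nonlinear potential theoretic De Giorgi iteration originating in \cite{kilp}. Your proposal follows exactly that scheme---dyadic balls, increasing levels, H\"older plus Chebyshev to obtain the recursive inequality, the dyadic--integral comparison yielding $\sum_j s_j\le c\,\mathcal P$, and the choice of $\delta$ with the correct $M_0$-exponents---and is therefore consistent with what the paper intends; the sketchy part, namely closing the iteration when the error terms $s_j$ are only summable rather than geometrically decaying, is precisely the device from \cite[Lemma~2.3]{piovra} that you cite, so the argument is complete by reference.
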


\section{Auxiliary integrands and their eigenvalues.}\label{const}
With reference to a general functional of the type $\mathcal{N}(\cdot)$ in \rif{generale}, considered under the assumptions of Theorem \ref{t3} (in particular implying that $q<3/2$), we define the integrand 
\eqn{hhh0}
$$H(x, z):=F(x, z)+a(x)\snr{z}^{q}$$ for $z \in \er^n$, $x\in \Omega$. 
We fix an arbitrary ball $B\subset \Omega$ centred at $x_{\rm c}$ and, with $\omega \in [0,1]$ and $\sigma \in (0,1)$, recalling \rif{laelle} and \rif{aii}, we define 
\eqn{defiH}
$$
\begin{cases}
H_{\omega,\sigma}(x,z):=F(x,z)+a_{\sigma}(x)[\ell_{\omega}(z)]^{q}\\
H_{\omega,\sigma,\textnormal{i}}(z)\equiv H_{\omega,\sigma,\textnormal{i}}(z;B):=F(x_{\rm c}, z)+a_{\sigma, \textnormal{i}}(B)[\ell_{\omega}(z)]^{q}\\
\eh_{\omega,\sigma,\textnormal{i}}(t)\equiv \eh_{\omega,\sigma,\textnormal{i}}(t;B):=tg(t)+a_{\sigma, \textnormal{i}}(B)[t^2+\omega^2]^{q/2}+1, \  \ t\geq 0\,.
\end{cases}
$$
It follows that
\eqn{defiHHH} 
$$
\mbox{$t \mapsto \eh_{\omega,\sigma,\textnormal{i}}(t)$ is non-decreasing}\,.
$$
Note that abbreviations as $H_{\omega,\sigma,\textnormal{i}}(z)\equiv H_{\omega,\sigma,\textnormal{i}}(z;B)$, used in \rif{defiH}, will take place whenever there will be no ambiguity concerning the identity of the ball $B$ in question. A similar shortened notation will be adopted for the case of further quantities depending on a single ball $B$. We next introduce two functions, $\lambda_{\omega,\sigma},\Lambda_{\omega,\sigma}\colon \Omega \times [0, \infty)\to [0, \infty)$, bound to describe the behaviour of the eigenvalues of $\partial_{zz}H_{\omega,\sigma}(\cdot)$. These are (recall that $q, \mu<3/2$)
\eqn{autovalori}
  $$
  \begin{cases}
    \lambda_{\omega,\sigma}(x,\snr{z}):=(|z|^2+1)^{-\mu/2}+ (q-1)a_{\sigma}(x)[\ell_{\omega}(z)]^{q-2}\\
    \Lambda_{\omega,\sigma}(x,\snr{z}):= (|z|^2+1)^{-1/2}g(|z|)+a_{\sigma}(x)[\ell_{\omega}(z)]^{q-2}\,.
    \end{cases}
$$
It then follows that
\begin{flalign}\label{rege.2}
\begin{cases}
\ \sigma [\ell_{\omega}(z)]^{q}\le H_{\omega,\sigma}(x,z)\le c\left([\ell_{\omega}(z)]^{q}+1\right)\\
\ \lambda_{\omega,\sigma}(x,\snr{z})\snr{\xi}^{2}\leq c\langle\partial_{zz}H_{\omega,\sigma}(x,z)\xi,\xi\rangle\,, \quad \snr{\partial_{zz}H_{\omega,\sigma}(x,z)}\le c\Lambda_{\omega,\sigma}(x,\snr{z})\\
\ \mathcal{V}_{\omega, \sigma}^{2}(x,z_{1},z_{2})\leq c\langle \partial_z  H_{\omega,\sigma}(x,z_{1})-\partial_z  H_{\omega,\sigma}(x,z_{2}),z_{1}-z_{2} \rangle\\
\ \snr{\partial_zH_{\omega,\sigma}(x,z)-\partial_zH_{\omega,\sigma}(y,z)}\le c \snr{x-y}^{\alpha_0}g(|z|)+c\snr{x-y}^{\alpha}[\ell_{\omega}(z)]^{q-1}
\end{cases}
\end{flalign}
hold for all $x,y\in \Omega$, $z,z_{1},z_{2},\xi\in\mathbb{R}^{n}$, where $c \equiv c (\data)$ (recall the definition of $\mathcal{V}_{\omega, \sigma}$ in \rif{vvv} and \rif{vpvqm}). 
Notice that \rif{rege.2}$_{1,2}$ directly follow from \rif{assif}$_{1,2}$, respectively, and \rif{0.1} used with $\eps=q-1$. The inequality in $\eqref{rege.2}_{3}$ is a straightforward consequence of \rif{Vm}-\rif{l6} and \rif{rege.2}$_{2}$ (see Remark \ref{VVremark} below). Finally, \rif{rege.2}$_4$ is a consequence of \rif{assif}$_{3}$. When $\omega >0$, by \rif{0.1} and \rif{rege.2}$_2$ the integrand $H_{\omega,\sigma}(\cdot)$ also satisfies, for all $x,y \in \Omega$, $\snr{x-y}\leq 1$ and $z,\xi\in \mathbb{R}^{n}$
\begin{eqnarray}\label{regecor}
\begin{cases}
\ \sigma(q-1) [\ell_{1}(z)]^{q-2}\snr{\xi}^{2}\leq c\langle\partial_{zz}H_{\omega,\sigma }(x,z)\xi,\xi\rangle\\
\ \snr{\partial_z  H_{\omega,\sigma}(x,z)}\ell_{1}(z)+\snr{\partial_{zz}H_{\omega,\sigma}(x,z)}[\ell_{1}(z)]^{2}\le c[\ell_{1}(z)]^{q}\\
\ \snr{\partial_zH_{\omega,\sigma}(x,z)-\partial_zH_{\omega,\sigma}(y,z)}\le c \snr{x-y}^{\tilde \alpha}[\ell_{1}(z)]^{q-1}\,,
\end{cases}
\end{eqnarray}
with $\tia =\min\{\alpha, \alpha_0\}$ and $c\equiv c(\data, \omega).$ Note that, on the contrary of the previous displays, the constant $c$ in \rif{regecor} depends on $\omega$ and that for \rif{regecor}$_2$ we also need Lemma \ref{marclemma} and \rif{rege.2}$_{2}$. As for $H_{\omega,\sigma,\textnormal{i}}(\cdot)$, by defining 
    \eqn{ill}
      $$
  \begin{cases}
  \lambda_{\omega,\sigma,\textnormal{i}}(\snr{z}):=(|z|^2+1)^{-\mu/2}+ (q-1)a_{\sigma, \textnormal{i}}(B)[\ell_{\omega}(z)]^{q-2}\\
   \Lambda_{\omega,\sigma,\textnormal{i}}(\snr{z}):=\ (|z|^2+1)^{-1/2}g(|z|)+a_{\sigma, \textnormal{i}}(B)[\ell_{\omega}(z)]^{q-2}
    \end{cases}
    $$
we have that
    \eqn{rege.2i}
$$
\begin{cases}
\ 
\sigma [\ell_{\omega}(z)]^{q}\le H_{\omega,\sigma,\textnormal{i}}(z)\le c([\ell_{\omega}(z)]^{q}+1)\\
\  \snr{\partial_z  H_{\omega,\sigma,\textnormal{i}}(z)}\le cg(|z|) +  ca_{\sigma, \textnormal{i}}(B)[\ell_{\omega}(z)]^{q-2}|z|\\
\ \lambda_{\omega,\sigma,\textnormal{i}}(\snr{z})\snr{\xi}^{2}\leq c\langle\partial_{zz}H_{\omega,\sigma,\textnormal{i}}(z)\xi,\xi\rangle, \quad \snr{\partial_{zz}H_{\omega,\sigma,\textnormal{i}}(z)}\le c\Lambda_{\omega,\sigma,\textnormal{i}}(\snr{z})\\
\ \mathcal{V}^{2}_{\omega, \sigma, \textnormal{i}}(z_{1},z_{2};B)\leq c \langle \partial_z  H_{\omega,\sigma,\textnormal{i}}(z_{1})-\partial_z  H_{\omega,\sigma,\textnormal{i}}(z_{2}),z_{1}-z_{2} \rangle
\end{cases}
$$
holds whenever $z,z_{1},z_{2},\xi\in\mathbb{R}^{n}$, with $c\equiv c(\data)$; for \rif{rege.2i}$_2$ we have used Lemma \ref{marclemma}. 
 In particular, it holds that
\eqn{ellrati}
$$
\frac{\Lambda_{\omega,\sigma,\textnormal{i}}(\snr{z})}{\lambda_{\omega,\sigma,\textnormal{i}}(\snr{z})}\le (|z|^2+1)^{\frac{\mu-1}{2}}g(\snr{z})+ \frac{1}{q-1}\leq c(q)(|z|^2+1)^{\frac{\mu-1}{2}}g(\snr{z}) 
$$
(recall that $g(\cdot)\geq 1$). 
As in \rif{regecor}, when $\omega>0$ the function $H_{\omega,\sigma,\textnormal{i}}(\cdot)$ satisfies
\eqn{regecor.1}
$$
\begin{cases}
\ \sigma(q-1) [\ell_{1}(z)]^{q-2}\snr{\xi}^{2}\leq c\langle\partial_{zz}H_{\omega,\sigma,\textnormal{i}}(z)\xi,\xi\rangle\\
\ \snr{\partial_z  H_{\omega,\sigma,\textnormal{i}}(z)}\ell_{1}(z)+\snr{\partial_{zz}H_{\omega,\sigma,\textnormal{i}}(z)}[\ell_{1}(z)]^{2}\le c[\ell_{1}(z)]^{q}
\end{cases}
$$
for all $z,\xi\in \mathbb{R}^{n}$, with $c\equiv c(\data, \omega).$
Two quantities that will have an important role in the forthcoming computations are, for $x\in \Omega$ and $t\geq 0$
\eqn{eii}
$$
\begin{cases}
\displaystyle E_{\omega,\sigma}(x,t):=\int_{0}^{t}\lambda_{\omega,\sigma}(x,s)s\ds\\
\displaystyle E_{\omega,\sigma, \textnormal{i}}(t)\equiv E_{\omega,\sigma, \textnormal{i}}(t;B):=\int_{0}^{t}\lambda_{\omega,\sigma,\textnormal{i}}(s;B)s\ds\,.
\end{cases}
$$
It follows that (recall that $\mu<2$)
\eqn{eii2}
$$
\begin{cases}E_{\omega,\sigma}(x,t):=\frac{1}{2-\mu}\left([\ell_{1}(t)]^{2-\mu}-1\right)+(1-1/q)a_{\sigma}(x)\left([\ell_{\omega}(t)]^{q}-\omega^q\right)\\
E_{\omega,\sigma,\textnormal{i}}(t):= \frac{1}{2-\mu}\left([\ell_{1}(t)]^{2-\mu}-1\right)+(1-1/q)a_{\sigma, \textnormal{i}}(B)\left([\ell_{\omega}(t)]^{q}-\omega^q\right)
\end{cases}
$$
and this leads to define
\eqn{eiit}
$$
\begin{cases}\ti E_{\omega,\sigma}(x,t):=\frac{1}{2-\mu}[\ell_{1}(t)]^{2-\mu}+(1-1/q)a_{\sigma}(x)[\ell_{\omega}(t)]^{q}\\
\ti E_{\omega,\sigma,\textnormal{i}}(t):= \frac{1}{2-\mu}[\ell_{1}(t)]^{2-\mu}+(1-1/q)a_{\sigma, \textnormal{i}}(B)[\ell_{\omega}(t)]^{q}\,.
\end{cases}
$$
We record a few basic properties of these functions in the following:
\begin{lemma}\label{leee} The following holds about the functions in \eqref{eii}, whenever $B\subseteq \Omega$ is a ball:
\begin{itemize}
\item For every $s,t\in [0,\infty)$
\begin{flalign}\label{0.000}
 \notag &\snr{E_{\omega,\sigma,\textnormal{i}}(s)-E_{\omega,\sigma,\textnormal{i}}(t)}\\
 & \  \le
\left[(s^2+t^2+1)^{(1-\mu)/2}+a_{\sigma, \textnormal{i}}(B)(s^2+t^2+\omega^2)^{(q-1)/2}\right]\snr{s-t}.
\end{flalign}
\vspace{1mm}
\item For all $x\in B$
\eqn{0.00}
$$
\snr{E_{\omega,\sigma}(x,t)-E_{\omega,\sigma, \textnormal{i}}(t)}\le (1-1/q) \snr{a(x)-a_{\textnormal{i}}(B)}\left([\ell_{\omega}(t)]^{q}-\omega^q\right).
$$
\vspace{1mm}
\item There exists constant $T\equiv T(\mu)\geq 1$ such that 
\eqn{0.001}
$$
t\le 2 [E_{\omega,\sigma}(x,t)]^{\frac{1}{2-\mu}}  \quad \mbox{and}\quad t\le  2[E_{\omega,\sigma,\textnormal{i}}(t)]^{\frac{1}{2-\mu}}
$$
hold for all $x\in \Omega$, $t\ge T$.
\vspace{1mm}
\item There exists a constant $c\equiv c(\mu, q,\nu)$ such that 
\eqn{0.002}
$$
\begin{cases}
|z|+\tilde E_{\omega,\sigma}(x,|z|)\leq c [H_{\omega,\sigma}(x,z)+1]\\
|z|+\tilde E_{\omega,\sigma, \textnormal{i}}(|z|) \leq c [H_{\omega,\sigma,\textnormal{i}}(z)+1]
 \end{cases}
$$
hold for every $x\in \Omega$ and $z\in \er^n$. 
\end{itemize}
\end{lemma}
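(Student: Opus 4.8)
The four items are elementary consequences of the closed-form expressions \eqref{eii2}, \eqref{eiit} together with the structural bounds \eqref{assif}, and I would prove them one at a time. For \eqref{0.000} I would argue by the fundamental theorem of calculus: by \eqref{eii} the map $r\mapsto E_{\omega,\sigma,\textnormal{i}}(r)$ is $C^{1}$ with derivative $\lambda_{\omega,\sigma,\textnormal{i}}(r;B)\,r$, so, assuming without loss of generality $0\le t\le s$ and recalling \eqref{ill},
$$
E_{\omega,\sigma,\textnormal{i}}(s)-E_{\omega,\sigma,\textnormal{i}}(t)=\int_{t}^{s}\left[(r^{2}+1)^{-\mu/2}r+(q-1)a_{\sigma,\textnormal{i}}(B)(r^{2}+\omega^{2})^{(q-2)/2}r\right]\, dr\,.
$$
It then suffices to bound the two summands of the integrand uniformly for $r\in[t,s]$: for the second one, $r\le[\ell_{\omega}(r)]$, $q-1\in(0,1)$ and $r\le s$ give $(q-1)(r^{2}+\omega^{2})^{(q-2)/2}r\le(q-1)[\ell_{\omega}(r)]^{q-1}\le(s^{2}+t^{2}+\omega^{2})^{(q-1)/2}$; for the first one, since $1\le\mu<3/2$ one has $(r^{2}+1)^{-\mu/2}r\le(r^{2}+1)^{(1-\mu)/2}$, and the comparison with $(s^{2}+t^{2}+1)^{(1-\mu)/2}$ becomes transparent upon applying the mean value theorem to $v\mapsto v^{(2-\mu)/2}$ on $[t^{2}+1,s^{2}+1]$. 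Multiplying by $|s-t|=s-t$ gives \eqref{0.000}.

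Item \eqref{0.00} is immediate from \eqref{eii2}: subtracting the two explicit expressions the first summands cancel and one is left with $E_{\omega,\sigma}(x,t)-E_{\omega,\sigma,\textnormal{i}}(t)=(1-1/q)(a_{\sigma}(x)-a_{\sigma,\textnormal{i}}(B))([\ell_{\omega}(t)]^{q}-\omega^{q})$; by \eqref{aii} one has $a_{\sigma}(x)-a_{\sigma,\textnormal{i}}(B)=a(x)-a_{\textnormal{i}}(B)\ge0$ for $x\in B$, and $[\ell_{\omega}(t)]^{q}\ge\omega^{q}$ by \eqref{laelle}, so the right-hand side is nonnegative and coincides with the bound in \eqref{0.00}. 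For \eqref{0.001} I would discard the nonnegative $a_{\sigma}$-term in \eqref{eii2} and use $\ell_{1}(t)\ge t$ to obtain $E_{\omega,\sigma}(x,t),\, E_{\omega,\sigma,\textnormal{i}}(t)\ge\tfrac{1}{2-\mu}(t^{2-\mu}-1)$ for every $x\in\Omega$; choosing $T\equiv T(\mu)\ge1$ so large that $t^{2-\mu}-1\ge t^{2-\mu}/2$ for $t\ge T$, and using the elementary inequality $2(2-\mu)\le2^{2-\mu}$ (valid for $\mu\in[1,3/2)$), this yields $t\le(2(2-\mu))^{1/(2-\mu)}[E]^{1/(2-\mu)}\le2[E]^{1/(2-\mu)}$, which is \eqref{0.001}.

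Finally, for \eqref{0.002} I would combine \eqref{eiit}, the lower bound $F(x,z)\ge\nu|z|g(|z|)\ge\nu|z|$ coming from the first line of \eqref{assif} (recall $g\ge1$), and the decomposition $H_{\omega,\sigma}(x,z)=F(x,z)+a_{\sigma}(x)[\ell_{\omega}(z)]^{q}$ of \eqref{defiH}: then $|z|\le F(x,z)/\nu\le H_{\omega,\sigma}(x,z)/\nu$; $[\ell_{1}(z)]^{2-\mu}\le2(|z|+1)$ (treating $|z|\le1$ and $|z|\ge1$ separately and using $2-\mu\le1$), so that $\tfrac{1}{2-\mu}[\ell_{1}(z)]^{2-\mu}\le c(\nu)(H_{\omega,\sigma}(x,z)+1)$; and $(1-1/q)a_{\sigma}(x)[\ell_{\omega}(z)]^{q}\le H_{\omega,\sigma}(x,z)$ since $F\ge0$. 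Adding these bounds gives \eqref{0.002}, and its frozen counterpart follows verbatim after replacing $x$ by $x_{\rm c}$, $a_{\sigma}(x)$ by $a_{\sigma,\textnormal{i}}(B)$, and the first line of \eqref{defiH} by its second line. The only point requiring a little care is \eqref{0.000}, where — since $\mu\ge1$ and $q<2$ make several of the exponents non-positive — the monotonicity estimates on $[t,s]$ must be applied in the correct direction, and where (as for the threshold $T$ in \eqref{0.001}) one must check that all constants are independent of $\omega\in[0,1]$, $\sigma\in(0,1)$ and of the ball $B$; beyond this bookkeeping of absolute constants depending only on $\mu,q,\nu$, none of the items presents any real difficulty.
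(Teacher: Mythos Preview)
Your treatments of \eqref{0.00}, \eqref{0.001} and \eqref{0.002} are correct and match the paper's view that these are direct consequences of \eqref{eii2}--\eqref{eiit}. In \eqref{0.000} your handling of the second summand is also fine: for $r\in[t,s]$ you have $(q-1)(r^2+\omega^2)^{(q-2)/2}r\le (q-1)(r^2+\omega^2)^{(q-1)/2}\le (s^2+t^2+\omega^2)^{(q-1)/2}$ since $q-1\in(0,1)$ and $r^2+\omega^2\le s^2+t^2+\omega^2$.

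The gap is in the first summand of \eqref{0.000}. You write $(r^2+1)^{-\mu/2}r\le (r^2+1)^{(1-\mu)/2}$ and then assert that the comparison with $(s^2+t^2+1)^{(1-\mu)/2}$ ``becomes transparent upon applying the mean value theorem to $v\mapsto v^{(2-\mu)/2}$ on $[t^2+1,s^2+1]$''. This does not work. Since $\mu\ge 1$ the exponent $(1-\mu)/2$ is nonpositive, so $r\mapsto(r^2+1)^{(1-\mu)/2}$ is nonincreasing and the best pointwise bound on $[t,s]$ is $(t^2+1)^{(1-\mu)/2}$, which exceeds $(s^2+t^2+1)^{(1-\mu)/2}$ by an \emph{unbounded} factor when $t$ is small and $s$ is large. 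The mean value theorem on $v^{(2-\mu)/2}$ gives $\tfrac12\eta^{-\mu/2}(s+t)(s-t)$ for some $\eta\in[t^2+1,s^2+1]$, but $\eta$ may be as small as $t^2+1$ and the same obstruction applies: no uniform control of $\eta^{-\mu/2}(s+t)$ by $(s^2+t^2+1)^{(1-\mu)/2}$ is available.

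The paper avoids this by not estimating the integrand pointwise. After writing $|E_{\omega,\sigma,\textnormal{i}}(s)-E_{\omega,\sigma,\textnormal{i}}(t)|\le\int_0^1 |E_{\omega,\sigma,\textnormal{i}}'(t+\tau(s-t))|\,d\tau\,|s-t|$ it invokes the standard one-dimensional equivalence \eqref{l60},
\[
\int_0^1 \bigl(|t+\tau(s-t)|^2+\omega^2\bigr)^{-\gamma/2}\,d\tau \ \approx_{\gamma}\ (s^2+t^2+\omega^2)^{-\gamma/2},\qquad \gamma<1,
\]
with $\gamma=\mu-1$ (first summand, $\omega$ replaced by $1$) and $\gamma=1-q$ (second summand). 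The point is precisely that the \emph{average} of $(|t+\tau(s-t)|^2+1)^{(1-\mu)/2}$ over $\tau\in[0,1]$ is comparable to $(s^2+t^2+1)^{(1-\mu)/2}$ even though no pointwise comparison holds. Note that \eqref{l60} introduces a constant depending on $\mu,q$; this is consistent with every downstream use of \eqref{0.000} in the paper, which always carries a constant $c$.
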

\vspace{1mm}
\begin{proof} The only inequality deserving some comments is \rif{0.000}, the other being direct consequences of the definitions in \rif{eii2}-\rif{eiit}. Notice that 
$$
|E_{\omega,\sigma,\textnormal{i}}'(t)| \leq c [\ell_{1}(t)]^{1-\mu}+ca_{\sigma}(x)[\ell_{\omega}(t)]^{q-1}
$$
with $c\equiv c (q, \mu)$. Using this last inequality we have 
\begin{flalign*}
 \snr{E_{\omega,\sigma,\textnormal{i}}(s)-E_{\omega,\sigma,\textnormal{i}}(t)} & \le \int_0^1
\snr{E_{\omega,\sigma,\textnormal{i}}'(t+\tau(s-t))}\, d\tau |s-t|\\
&  \leq c \int_0^1 (\snr{t+\tau(s-t)}^2+1)^{\frac{1-\mu}{2}}\, d\tau \\
& \quad +c 
a_{\sigma, \textnormal{i}}(B) \int_0^1 (\snr{t+\tau(s-t)}^2+\omega^2)^{\frac{q-1}{2}}\, d\tau
\end{flalign*}
so that \rif{0.000} follows applying \rif{l60} with $\gamma\equiv \mu-1$ and $\gamma\equiv 1-q$. \end{proof}
\begin{remark}\label{VVremark}The proof of $\eqref{rege.2}_{3}$ is rather standard, but we report it for completeness as the exponents involved here are not the usual ones. Using, in turn, \rif{Vm} (with $p=2-\mu, q$) and \rif{l6} (with $\gamma = \mu, 2-q$), and finally the first inequality in $\eqref{rege.2}_{2}$, we have  
\begin{flalign*}
 \mathcal{V}_{\omega, \sigma}^{2}(x,z_{1},z_{2}) &  \leq c (\snr{z_{1}}^{2}+\snr{z_{2}}^{2}+1)^{-\mu/2} |z_{1}-z_{2}|^2\\
  & \qquad +c a_{\sigma}(x)(\snr{z_{1}}^{2}+\snr{z_{2}}^{2}+\omega^{2})^{(q-2)/2} |z_{1}-z_{2}|^2 \\
  & \leq c \int_{0}^{1}(\snr{z_{2}+\tau(z_{1}-z_{2})}^{2}+1)^{-\mu/2}\, d\tau  |z_{1}-z_{2}|^2\\
& \qquad + c a_{\sigma}(x)\int_{0}^{1}(\snr{z_{2}+\tau(z_{1}-z_{2})}^{2}+\omega^{2})^{(q-2)/2}\, d\tau  |z_{1}-z_{2}|^2\\
& \leq c \int_0^1 \langle \partial_{zz}  H_{\omega,\sigma}(x,z_{2}+\tau (z_{1}-z_{2}))(z_1-z_2),z_{1}-z_{2} \rangle \d\tau\\
& = c\langle \partial_z  H_{\omega,\sigma}(x,z_{1})-\partial_z  H_{\omega,\sigma}(x,z_{2}),z_{1}-z_{2} \rangle\,.
\end{flalign*}

\end{remark}
\section{Lipschitz estimates with small anisotropicity}\label{rere}
Let $B_{\tau}\equiv B_{\tau}(x_{\rm c})\Subset \Omega$ be a ball with $\tau \leq 1$ and let $u_{0}\in W^{1,\infty}(B_{\tau})$. With $\omega \in (0,1]$, we define $v_{\omega,\sigma}\in u_{0}+W^{1,q}_{0}(B_{\tau})$ as the unique solution to the Dirichlet problem
\eqn{pdfz}
$$
v_{\omega,\sigma}\mapsto \min_{w \in u_{0}+W^{1,q}_{0}(B_{\tau})} \int_{B_{\tau}}H_{\omega,\sigma,\textnormal{i}}(Dw) \dx, 
$$
where, according to the notation in \rif{defiH}$_2$, it is $H_{\omega,\sigma,\textnormal{i}}(z)\equiv H_{\omega,\sigma,\textnormal{i}}(z;B_{\tau})$. In particular, we have 
\eqn{e.fz}
$$
\int_{B_{\tau}}H_{\omega,\sigma,\textnormal{i}}(Dv_{\omega,\sigma}) \dx \le \int_{B_{\tau}}H_{\omega,\sigma,\textnormal{i}}(Du_{0}) \dx\,.
$$
Existence, and uniqueness, follow by Direct Methods of the Calculus of Variations and the Euler-Lagrange equation
\eqn{el.fz}
$$
\int_{B_{\tau}}\langle\partial_z  H_{\omega,\sigma,\textnormal{i}}(Dv_{\omega,\sigma}), D\varphi\rangle \dx=0
$$
holds for all $\varphi\in W^{1,q}_{0}(B_{\tau})$ by \rif{regecor.1}. 
Again \eqref{regecor.1} and standard regularity theory \cite[Chapter 8]{giu}, \cite{manth1, manth2} imply
\eqn{0.fz}
$$
\begin{cases}
v_{\omega,\sigma}\in W^{1,\infty}_{\loc}(B_{\tau})\cap W^{2,2}_{\loc}(B_{\tau})
\\
 \partial_z  H_{\omega, \sigma, \textnormal{i}}(Dv_{\omega,\sigma})\in W^{1,2}_{\loc}(B_{\tau};\mathbb{R}^{n})\,.
 \end{cases}
$$
\begin{proposition}\label{fzfz} With $v_{\omega,\sigma}\in u_{0}+W^{1,q}_{0}(B_{\tau})$ as in \trif{pdfz}, for every integer $n\geq 2$ there exists a continuous and non-decreasing function $\mu_n \colon (0,1) \to (1, 3/2)$ such that, for every $\delta \in (0,1)$, if \trif{assif} holds with $1\leq \mu < \mu_n(\delta)$, then
\eqn{8.fz}
$$
\nr{Dv_{\omega,\sigma}}_{L^{\infty}(B_{3\tau/4})}\le c \eh_{\omega,\sigma,\textnormal{i}}^{\delta}\left(\nr{Du_{0}}_{L^{\infty}(B_{\tau})}\right)\nr{Du_{0}}_{L^{\infty}(B_{\tau})}+c
$$
holds too, with $c\equiv c(\data, \delta)\geq 1$, where $\eh_{\omega,\sigma,\textnormal{i}}(\cdot)$ has been defined in \eqref{defiH}$_3$. Moreover, whenever $M$ is a number such that
\eqn{mm}
$$ \nr{Dv_{\omega,\sigma}}_{L^{\infty}(B)}+1\leq M\,, $$
where $B\Subset B_{\tau}$ is a ball (not necessarily concentric to $B_{\tau}$), the Caccioppoli type inequality
\eqn{10.fz2}
$$
\mint_{3B/4}\snr{D(E_{\omega,\sigma,\textnormal{i}}(\snr{Dv_{\omega,\sigma}})-\kk)_{+}}^{2} \dx\leq \frac{cM^{\delta}}{|B|^{2/n}}\mint_{B}(E_{\omega,\sigma,\textnormal{i}}(\snr{Dv_{\omega,\sigma}})-\kk)_{+}^{2} \dx
$$holds whenever $\kappa \geq 0$, again with $c\equiv c(\data, \delta)$. Here, according to the notation in \eqref{eii}$_2$, it is  
$E_{\omega,\sigma,\textnormal{i}}(\snr{Dv_{\omega,\sigma}})\equiv E_{\omega,\sigma,\textnormal{i}}(\snr{Dv_{\omega,\sigma}};B_{\tau})$. \end{proposition}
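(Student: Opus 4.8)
The plan is the following. Since $\omega>0$, the integrand $H_{\omega,\sigma,\textnormal{i}}(\cdot)\equiv H_{\omega,\sigma,\textnormal{i}}(\cdot;B_{\tau})$ is of class $C^2$ and, at each fixed gradient value, uniformly convex (see \rif{regecor.1}); hence the regularity recorded in \rif{0.fz} is available, namely $v:=v_{\omega,\sigma}\in W^{1,\infty}_{\loc}(B_{\tau})\cap W^{2,2}_{\loc}(B_{\tau})$ with $\partial_zH_{\omega,\sigma,\textnormal{i}}(Dv)\in W^{1,2}_{\loc}(B_{\tau})$, so that all the manipulations below are legitimate; in particular no fractional machinery is needed and the Bernstein argument can be run directly. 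First I would differentiate the Euler--Lagrange equation \rif{el.fz} in the coordinate direction $e_s$, obtaining
\[
\int_{B_{\tau}}\langle\partial_{zz}H_{\omega,\sigma,\textnormal{i}}(Dv)\,D(\partial_sv),D\varphi\rangle\dx=0
\]
for $s\in\{1,\dots,n\}$ and every compactly supported $\varphi\in W^{1,2}(B_{\tau})$ (the passage from smooth test functions uses the local bounds \rif{regecor.1}$_2$, whose $\omega$-dependence is immaterial as it only serves to justify the differentiation).

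The core step is the anisotropic Bernstein estimate yielding \rif{10.fz2}. Fixing a ball $B\Subset B_{\tau}$, $M$ as in \rif{mm}, a cut-off and $\kk\ge 0$, I would test the differentiated equation with a function built from $\partial_sv$, the cut-off and a truncation-type nondecreasing function of $\mathcal E:=E_{\omega,\sigma,\textnormal{i}}(\snr{Dv})$, sum over $s$, use $\sum_s\partial_sv\,D(\partial_sv)=\tfrac12 D\snr{Dv}^2$, and invoke the ellipticity information \rif{rege.2i}--\rif{ill}. The point of the choice of $\mathcal E$ is that $D\mathcal E=\lambda_{\omega,\sigma,\textnormal{i}}(\snr{Dv})(D^2v)Dv$, whence $\snr{D\mathcal E}\le\lambda_{\omega,\sigma,\textnormal{i}}(\snr{Dv})\snr{Dv}\snr{D^2v}$: the lower eigenvalue $\lambda_{\omega,\sigma,\textnormal{i}}$ is thereby reabsorbed on the left-hand side after the usual Young's inequality reabsorptions, while the cut-off terms remain weighted by the ellipticity ratio $\Lambda_{\omega,\sigma,\textnormal{i}}/\lambda_{\omega,\sigma,\textnormal{i}}$. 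By \rif{ellrati} together with the sublinearity bound \rif{0.1} used with a small exponent $\eps=\eps(\delta)$, that ratio is $\le c\,(t^2+1)^{(\mu-1+\eps)/2}\le cM^{\mu-1+\eps}$ for $t\le M$; requiring $\mu$ close enough to $1$ that $2(\mu-1+\eps)\le\delta$ yields the factor $M^{\delta}$ and hence \rif{10.fz2}. This is the first restriction forcing $\mu<\mu_n(\delta)$. It is crucial that only the $\omega$- and $\sigma$-free bounds \rif{rege.2i}--\rif{ellrati} enter quantitatively, so that the constant in \rif{10.fz2} is $c\equiv c(\data,\delta)$.

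Next I would derive \rif{8.fz}. Coupling \rif{10.fz2} with the Sobolev--Poincar\'e inequality on balls (Sobolev exponent $2\chi$, with $\chi=n/(n-2)$ if $n\ge 3$ and any fixed $\chi>1$ if $n=2$) gives, for every $\kk\ge 0$ and every concentric sub-ball,
\[
\Big(\mint_{3B/4}(\mathcal E-\kk)_+^{2\chi}\dx\Big)^{1/\chi}\le c(M^{\delta}+1)\mint_{B}(\mathcal E-\kk)_+^2\dx ,
\]
which is exactly hypothesis \rif{revva} of Lemma \ref{revlem} with vanishing $f_j$'s (the frozen integrand being autonomous). Applying Lemma \ref{revlem} at a.e.\ $x_0\in B_{3\tau/4}$ with $\kk_0=0$ and then reabsorbing via $\mathcal E^2\le\nr{\mathcal E}_{L^{\infty}(B_{3\tau/4})}\mathcal E$ gives $\nr{\mathcal E}_{L^{\infty}(B_{3\tau/4})}\le c(M^{\delta}+1)^{\chi/(\chi-1)}\mint_{B_{\tau}}\mathcal E\dx$. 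To close, I would: (i) bound $\mint_{B_{\tau}}\mathcal E$ from above by $c\mint_{B_{\tau}}[H_{\omega,\sigma,\textnormal{i}}(Dv)+1]$ via \rif{0.002}, then by $c\mint_{B_{\tau}}[H_{\omega,\sigma,\textnormal{i}}(Du_0)+1]$ via the minimality \rif{e.fz}, and finally in terms of $\nr{Du_0}_{L^{\infty}(B_{\tau})}$ and $\eh_{\omega,\sigma,\textnormal{i}}(\nr{Du_0}_{L^{\infty}(B_{\tau})})$ via \rif{assif}$_1$ and the monotonicity \rif{defiHHH}; (ii) invert, passing from $\nr{\mathcal E}_{L^{\infty}}$ back to $\nr{Dv}_{L^{\infty}(B_{3\tau/4})}$ by means of the two lower bounds for $E_{\omega,\sigma,\textnormal{i}}$, namely $t\le 2\,E_{\omega,\sigma,\textnormal{i}}(t)^{1/(2-\mu)}$ from \rif{0.001} (to be used when $a_{\sigma,\textnormal{i}}(B_{\tau})$ is small) and $a_{\sigma,\textnormal{i}}(B_{\tau})\ell_\omega(t)^q\le c\,E_{\omega,\sigma,\textnormal{i}}(t)+c$ (to be used when $a_{\sigma,\textnormal{i}}(B_{\tau})$ is large), keeping in mind $M\simeq\nr{Dv}_{L^{\infty}(B_{3\tau/4})}+1$. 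This produces a self-improving inequality for $\nr{Dv}_{L^{\infty}(B_{3\tau/4})}$ which, provided $\mu$ is close enough to $1$ depending on $n,q,\delta$ (the second restriction on $\mu_n(\delta)$) and $\eps=\eps(\delta)$ is chosen small enough, can be absorbed and then, exploiting the explicit form of $\eh_{\omega,\sigma,\textnormal{i}}$, massaged into exactly \rif{8.fz}. Defining $\mu_n(\cdot)$ as the minimum of the two continuous non-decreasing profiles produced by the two restrictions, capped below $3/2$, gives a continuous non-decreasing map $(0,1)\to(1,3/2)$, as required; the remaining assertion \rif{10.fz2} has already been obtained.

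I expect the Bernstein step producing \rif{10.fz2} to be the main obstacle: one must single out $\mathcal E=E_{\omega,\sigma,\textnormal{i}}(\snr{Dv})$ as the correct \textit{intrinsic} subsolution so that the degenerate weight $\lambda_{\omega,\sigma,\textnormal{i}}$ cancels, perform the reabsorptions so as to land precisely on the $M^{\delta}$-weighted inequality, and carry this out using only the $\omega,\sigma$-independent ellipticity bounds, the $\omega$-dependent inequalities \rif{regecor.1} being admissible only to justify differentiability and never for the quantitative estimate. A close second is the bookkeeping in the closing argument — in particular the case analysis on the size of $a_{\sigma,\textnormal{i}}(B_{\tau})$ and the tuning of $\eps(\delta)$ and $\mu_n(\delta)$ — that is needed to obtain the near-homogeneous-of-degree-one form \rif{8.fz} rather than a cruder power bound.
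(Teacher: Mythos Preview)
Your derivation of the Caccioppoli inequality \rif{10.fz2} is correct and essentially identical to the paper's: differentiate \rif{el.fz}, test with $\eta^2(E_{\omega,\sigma,\textnormal{i}}(|Dv|)-\kk)_+\partial_sv$, sum over $s$, and use \rif{ellrati} together with \rif{0.1} to replace the ellipticity ratio by $M^{\mu-1+\eps}$, finally choosing $\mu-1+\eps\le\delta/2$.

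There is, however, a genuine gap in your closing argument for \rif{8.fz}. You write ``$M\simeq\nr{Dv}_{L^{\infty}(B_{3\tau/4})}+1$'' and claim the resulting inequality is self-improving on $B_{3\tau/4}$. This is not the case. In order to apply Lemma~\ref{revlem} at a point $x_0\in B_{3\tau/4}$ you must verify \rif{revva} on every ball $B_\varrho(x_0)\subset B_{r_0}(x_0)$, and for that you need \rif{10.fz2} with $M\ge\nr{Dv}_{L^\infty(B_{r_0}(x_0))}+1$. Since $B_{r_0}(x_0)$ protrudes beyond $B_{3\tau/4}$, the constant $M$ must be taken on a \emph{strictly larger} ball. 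Likewise your reabsorption $\mathcal E^2\le\nr{\mathcal E}_{L^\infty}\mathcal E$ involves the supremum on that larger ball, not on $B_{3\tau/4}$. So what you actually obtain is an inequality of the form
\[
E_{\omega,\sigma,\textnormal{i}}\bigl(\nr{Dv}_{L^\infty(B_{\tau_1})}\bigr)\le \frac{c}{(\tau_2-\tau_1)^{n/2}}\Bigl[E_{\omega,\sigma,\textnormal{i}}\bigl(\nr{Dv}_{L^\infty(B_{\tau_2})}\bigr)\Bigr]^{\gamma_*}\left(\int_{B_{5\tau/6}}[H_{\omega,\sigma,\textnormal{i}}(Dv)+1]\right)^{1/2}
\]
for $3\tau/4\le\tau_1<\tau_2\le 5\tau/6$, with an exponent $\gamma_*<1$ (this is where \rif{0.001} is used to convert $M$ back into $E_{\omega,\sigma,\textnormal{i}}$). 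This is \emph{not} directly absorbable; the paper closes it by Young's inequality followed by the iteration Lemma~\ref{l5} applied to $h(\mathfrak t)=E_{\omega,\sigma,\textnormal{i}}(\nr{Dv}_{L^\infty(B_{\mathfrak t})})$ --- an ingredient your sketch omits entirely.

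A smaller point: your ``inversion'' step (ii), via a case analysis on the size of $a_{\sigma,\textnormal{i}}(B_\tau)$, is more delicate than you suggest. The paper avoids this split by introducing the auxiliary function $\tilde E_0(t):=t+a_{\sigma,\textnormal{i}}(B_\tau)t^q$, observing $\tilde E_0(t)\le c[E_{\omega,\sigma,\textnormal{i}}(t)]^{1/(2-\mu)}+c$, and then applying $\tilde E_0^{-1}$ (which is concave, hence subadditive and sublinear with respect to scalar multiplication) to both sides; this handles the two regimes uniformly and leads cleanly to \rif{8.fz}.
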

\begin{proof} The final shape of $\mu_n>1$ will be determined in the course of the proof via successive choices of $\mu$ (to be taken closer and closer to one, depending on $\delta$).  
All the constants in the forthcoming estimates will be independent of $\omega,\sigma$, so we shall omit indicating the dependence on such parameters, simply denoting $v \equiv v_{\omega,\sigma}$, $H_{\textnormal{i}}\equiv H_{\omega,\sigma,\textnormal{i}}$, $\eh_{\textnormal{i}}\equiv \eh_{\omega,\sigma,\textnormal{i}}$, $E_{\textnormal{i}}\equiv E_{\omega,\sigma,\textnormal{i}}$, and so on. Properties \eqref{0.fz} allow to differentiate \eqref{el.fz}, i.e., replacing $\varphi$ by $D_s\varphi$ for $s \in \{1, \ldots, n\}$ and integrate by parts. Summing the resulting equations over $s \in \{1, \ldots, n\}$, we get
\eqn{1.fz}
$$
\sum_{s=1}^{n}\int_{B_{\tau}}\langle\partial_{zz}H_{\textnormal{i}}(Dv)DD_{s}v,D\varphi\rangle \dx=0\,,
$$
that, again thanks to \eqref{0.fz}, holds for all $\varphi\in W^{1,2}(B_{\tau})$ such that $\supp\, \varphi \Subset B_{\tau}$. Let $B\Subset B_{\tau}$ be a ball, $\kk\ge 0$ be any non-negative number and $\eta\in C^{1}_{c}(B_{\tau})$ be a cut-off function such that $\mathds{1}_{3B/4}\le \eta\le \mathds{1}_{5B/6}$ and $\snr{D\eta}\lesssim |B|^{-1/n}$. By  \eqref{0.fz} the functions $\varphi\equiv \varphi_s:=\eta^{2}(E_{\textnormal{i}}(\snr{Dv})-\kk)_{+}D_{s}v$ are  admissible in \eqref{1.fz}, therefore via \eqref{ill}-\eqref{rege.2i} and Young's inequality we obtain
\begin{flalign}
\notag &\int_{B_{\tau}}\lambda_{\textnormal{i}}(\snr{Dv})(E_{\textnormal{i}}(\snr{Dv})-\kk)_{+}\snr{D^{2}v}^{2}\eta^{2} \dx+\int_{B_{\tau}}\snr{D(E_{\textnormal{i}}(\snr{Dv})-\kk)_{+}}^{2} \eta^{2}\dx \nonumber \\
&\qquad       \qquad \qquad\qquad\le c\int_{B_{\tau}}\left[\frac{\Lambda_{\textnormal{i}}(\snr{Dv})}{\lambda_{\textnormal{i}}(\snr{Dv})}\right]^2(E_{\textnormal{i}}(\snr{Dv})-\kk)_{+}^{2}\snr{D\eta}^{2} \dx
\label{vetti}
\end{flalign}
with $c\equiv c (\data)$. See \cite[Lemmas 4.5-4.6]{BM} for more details and a similar inequality. Using
\rif{ellrati} and a few elementary manipulations, we again find 
\begin{flalign*} &\int_{B}\snr{D[\eta^2(E_{\textnormal{i}}(\snr{Dv})-\kk)_{+}]}^{2} \dx \\
& \qquad  \le c
\int_{B}
(|Dv|^2+1)^{ \mu-1 }[g(\snr{Dv})]^2
(E_{\textnormal{i}}(\snr{Dv})-\kk)_{+}^{2}\snr{D\eta}^{2} \dx
\end{flalign*}
and taking $M$ as in 
\rif{mm} we arrive at
$$
\mint_{B}\snr{D[\eta^2(E_{\textnormal{i}}(\snr{Dv})-\kk)_{+}]}^{2} \dx \leq \frac{cM^{2(\mu-1)} [g(M)]^2}{|B|^{2/n}}\mint_{B}(E_{\textnormal{i}}(\snr{Dv})-\kk)_{+}^{2} \dx,
$$
again with $c\equiv c(\data)$. In turn, using \rif{0.1} with $\eps >0$ and recalling that $M\geq 1$, we find
\eqn{10.fz}
$$
\mint_{B}\snr{D[\eta^2(E_{\textnormal{i}}(\snr{Dv})-\kk)_{+}]}^{2} \dx \leq \frac{cM^{2(\mu-1+\eps)} }{|B|^{2/n}}\mint_{B}(E_{\textnormal{i}}(\snr{Dv})-\kk)_{+}^{2} \dx,
$$
again with $c\equiv c (\data, \eps)$. This gives \rif{10.fz2} recalling that $\eta \geq 1$ on $3B/4$ and provided $\mu\equiv \mu(\delta) \geq 1$ and $\eps\equiv \eps(\delta)>0$ are such that 
\eqn{sce1}
$$
\mu-1+\eps \leq \delta/2\,.
$$
We pass to the proof of \rif{8.fz}. With $T$ being the constant in \rif{0.001}, we can assume without loss of generality that 
\eqn{prebb}
$$
\nr{Dv_{\omega,\sigma}}_{L^{\infty}(B_{3\tau/4})} \geq  T\geq 1
$$
holds, otherwise \rif{8.fz} follows trivially. By \rif{10.fz} and 
Sobolev embedding theorem we obtain
\eqn{scegli}
$$
\left(\mint_{B/2}(E_{\textnormal{i}}(\snr{Dv})-\kk)_{+}^{2\chi} \dx\right)^{1/\chi}\le cM^{2(\mu-1+\eps)} \mint_{B}(E_{\textnormal{i}}(\snr{Dv})-\kk)_{+}^{2} \dx,
$$
where $2\chi\equiv 2\chi(n)>2$ is the exponent coming from the Sobolev embedding exponent, and $c\equiv c (\data, \eps)$. We now want to apply Lemma \ref{revlem} to $w\equiv E_{\textnormal{i}}(\snr{Dv})$ using \rif{scegli} to satisfy \rif{revva} (here all the terms involving the functions $f_j$ in \rif{revva} are not present). We fix parameters $3\tau/4\le \tau_{1}<\tau_{2}\le 5\tau/6$ and related balls
$$B_{3\tau/4}(x_{\rm c})\Subset B_{\tau_1}(x_{\rm c})\Subset B_{\tau_2}(x_{\rm c})\subset B_{5\tau/6}(x_{\rm c})\Subset B_{\tau}(x_{\rm c})\equiv B_{\tau}\,,$$ this time all concentric to the initial ball $B_{\tau}\equiv B_{\tau}(x_{\rm c})$. 
Take $x_{0}\in B_{\tau_{1}}$ arbitrary and set $r_{0}:=(\tau_{2}-\tau_{1})/8$, so that $ B_{r_{0}}(x_0) \subset B_{\tau_2}$, choose $M\equiv 2\nr{Dv}_{L^{\infty}(B_{\tau_2})}$ and take $B\equiv B_{\rr}(x_0)\subset B_{r_0}(x_0)$ in \rif{scegli}. This yields
\begin{flalign}
\notag & \left(\mint_{B_{\rr/2}(x_0)}(E_{\textnormal{i}}(\snr{Dv})-\kk)_{+}^{2\chi} \dx\right)^{1/\chi}\\
& \quad \quad  \le cM^{2(\mu-1+\eps)} \mint_{B_{\rr}(x_0)}(E_{\textnormal{i}}(\snr{Dv})-\kk)_{+}^{2} \dx\,,\label{scegli2}
\end{flalign}
where $c\equiv c(\data, \eps)$. 
Note that the above choice of $M$ fits \rif{mm} via \rif{prebb}. By \rif{scegli2} we are able to apply Lemma \ref{revlem} with $\kk_{0}=0$, $M_0:= M^{\mu-1+\eps} $ and $M_1=M_2=M_3\equiv 0$, to get
\eqn{3.fz}
$$
E_{\textnormal{i}}(\snr{Dv(x_{0})})\le cM^{\frac{(\mu-1+\eps)\chi}{\chi-1}}\left(\mint_{B_{r_{0}}}[E_{\textnormal{i}}(\snr{Dv})]^{2} \dx\right)^{1/2}\,,
$$
with $c\equiv c(n,q)$. Being $x_{0}$ arbitrary in $B_{\tau_{1}}$, \eqref{3.fz} implies
$$
E_{\textnormal{i}}\left(\nr{Dv}_{L^{\infty}(B_{\tau_{1}})}\right)\le\frac{c\nr{Dv}_{L^{\infty}(B_{\tau_{2}})}^{\frac{(\mu-1+\eps) \chi}{\chi-1}}}{ (\tau_{2}-\tau_{1})^{n/2}}\left(\int_{B_{\tau_2}}[E_{\textnormal{i}}(\snr{Dv})]^{2} \dx\right)^{1/2}
$$
and we have used the actual definition of $M$. Recalling \rif{prebb}, we continue to estimate as
\begin{flalign}
\notag E_{\textnormal{i}}\left(\nr{Dv}_{L^{\infty}(B_{\tau_{1}})}\right)
&  \stackrel{\eqref{0.001}}{\le}\frac{c}{(\tau_{2}-\tau_{1})^{n/2}}\left[E_{\textnormal{i}}\left(\nr{Dv}_{L^{\infty}(B_{\tau_{2}})}\right)\right]^{\frac{(\mu-1+\eps) \chi}{(\chi-1)(2-\mu)}+\frac{1}{2}}\\
& \hspace{12mm}\cdot\left(\int_{B_{5\tau/6}}[E_{\textnormal{i}}(\snr{Dv})]\dx\right)^{1/2}\nonumber \\
&  \stackrel{\eqref{0.002}}{\le}\frac{c}{{(\tau_{2}-\tau_{1})^{n/2}}}\left[E_{\textnormal{i}}\left(\nr{Dv}_{L^{\infty}(B_{\tau_{2}})}\right)\right]^{\frac{(\mu+2\eps) \chi-2+\mu}{2(\chi-1)(2-\mu)}} \notag \\
& \hspace{12mm}\cdot \left(\int_{B_{5\tau/6}}\left[H_{\textnormal{i}}(Dv)+1\right] \dx\right)^{1/2}
\label{ninetta}
\end{flalign}
for $c\equiv c(\data, \eps)$. By choosing $\eps>0$ and $\mu > 1$ such that 
\eqn{sce2}
$$
 \gamma_*(\mu, \eps):=\frac{(\mu+2\eps) \chi-2+\mu}{2(\chi-1)(2-\mu)}<1
$$
holds (note that this function is increasing in both $\mu$ and $\eps$ and that $ \gamma_*(1,0)=1/2$), we can apply Young's inequality in \rif{ninetta}, thereby obtaining 
\begin{flalign*}
E_{\textnormal{i}}\left(\nr{Dv}_{L^{\infty}(B_{\tau_{1}})}\right) & \leq \frac 12 E_{\textnormal{i}}\left(\nr{Dv}_{L^{\infty}(B_{\tau_{2}})}\right) \\
& \qquad +\frac{c} {(\tau_{2}-\tau_{1})^{n\tilde \gamma}}\left(\int_{B_{5\tau/6}}\left[H_{\textnormal{i}}(Dv)+1\right] \dx\right)^{\tilde \gamma}\,,
\end{flalign*}
where 
$$
\tilde \gamma(\mu, \eps):= \frac{(\chi-1)(2-\mu)}{(4-3\mu-2\eps)\chi-2+\mu}\,.
$$
Notice that $\tilde \gamma(1, 0)=1$ and that this is an increasing function of its arguments, therefore we find $\eps>0$ and $\mu > 1$ such that 
\eqn{sce3}
$$
\tilde \gamma(\mu, \eps) \leq 1 +\delta/4
$$
is satisfied in addition to \rif{sce2}. Lemma \ref{l5}, applied with $$h(\mathfrak t)\equiv E_{\textnormal{i}}\left(\nr{Dv}_{L^{\infty}(B_{\mathfrak{t}})}\right), \quad t\equiv 3\tau/4< \mathfrak t < 5\tau/6\equiv s, $$ which is always finite by \rif{0.fz}$_2$, now gives
$$
E_{\textnormal{i}}\left(\nr{Dv}_{L^{\infty}(B_{3\tau/4})}\right)\le c\left(\mint_{B_{\tau}}\left[H_{\textnormal{i}}(Dv)+1\right] \dx\right)^{1 +\delta/4}\,,
$$
so that, recalling \rif{e.fz}, we arrive at 
\eqn{nowg}
$$
E_{\textnormal{i}}\left(\nr{Dv}_{L^{\infty}(B_{3\tau/4})}\right)\le c\left(\mint_{B_{\tau}}\left[H_{\textnormal{i}}(Du_{0})+1\right] \dx\right)^{1 +\delta/4}\,.
$$
Setting $\ti{E}_{0}(t):=t+a_{\sigma, \textnormal{i}}(B)t^{q}$ for $t\geq 0$, we find, also thanks to \rif{0.001}, that
\eqn{findthat}
$$
\ti{E}_{0}(t) \leq c [E_{\textnormal{i}}(t)]^{\frac{1}{2-\mu}}+c\quad \mbox{holds for every $t\geq 0$}\,,
$$
where $c\equiv c (\nu, q)$. 
Recalling \rif{assif}$_1$, the definitions in \rif{defiH} imply
$$
H_{\textnormal{i}}(z)\equiv H_{\omega,\sigma,\textnormal{i}}(z)\equiv H_{\omega,\sigma,\textnormal{i}}(z;B_{\tau})\leq L\eh_{\omega,\sigma,\textnormal{i}}(|z|; B_{\tau})\equiv L\eh_{\textnormal{i}}(|z|), 
$$
so that inequality \rif{nowg} together with \rif{defiHHH} and \rif{findthat} now gives
\begin{flalign*}
\ti{E}_{0}\left(\nr{Dv}_{L^{\infty}(B_{3\tau/4})}\right) &\le 
c \left[\eh_{\textnormal{i}}\left(\nr{Du_{0}}_{L^{\infty}(B_{\tau})}\right)+1\right]^{\frac{1+\delta/4}{2-\mu}}\\
& 
\leq c \left[\eh_{\textnormal{i}}\left(\nr{Du_{0}}_{L^{\infty}(B_{\tau})}\right)+1\right]^{1+\frac{\delta}{2}}
\end{flalign*}
with $c\equiv c(\data, \delta)$, provided we further take $\mu> 1$ such that
\eqn{sce4}
$$
\frac{1+\delta/4}{2-\mu}\leq 1+\frac{\delta}2\,.
$$
Note that in the above inequality we have incorporated the dependence on $\eps$ in the dependence on $\delta$ as this last quantity influences the choice of $\eps$ via \rif{sce2} and \rif{sce3}. Using the very definition \rif{defiH}$_3$ we then continue to estimate
\begin{flalign*}
&\ti{E}_{0}\left(\nr{Dv}_{L^{\infty}(B_{3\tau/4})}\right) \\ & \quad \le c \eh_{\textnormal{i}}^{\delta/2}\left(\nr{Du_{0}}_{L^{\infty}(B_{\tau})}\right)  \nr{Du_{0}}_{L^{\infty}(B_{\tau})} g\left(\nr{Du_{0}}_{L^{\infty}(B_{\tau})}\right)\nonumber \\
& \qquad   + c\eh_{\textnormal{i}}^{\delta/2}\left(\nr{Du_{0}}_{L^{\infty}(B_{\tau})}\right)a_{\sigma, \textnormal{i}}(B_{\tau})\left[\ell_{\omega}\left(\nr{Du_{0}}_{L^{\infty}(B_{\tau})}\right)\right]^{q}+c    
\end{flalign*}
and, by again using \rif{0.1} with $\eps=\delta/2$, we conclude with 
\eqn{7.fz}
$$
\ti{E}_{0}\left(\nr{Dv}_{L^{\infty}(B_{3\tau/4})}\right)\le c\eh_{\textnormal{i}}^{\delta}\left(\nr{Du_{0}}_{L^{\infty}(B_{\tau})}\right) 
\ti{E}_{0}\left(\nr{Du_{0}}_{L^{\infty}(B_{\tau})}\right)+c
$$
with $c\equiv c (\data, \delta)$. Since $\ti{E}_{0}(\cdot)$ is monotone increasing, convex and such that $\ti{E}_{0}(0)=0$, we deduce that its inverse $\ti{E}_{0}^{-1}(\cdot)$ is increasing, concave and $\ti{E}_{0}^{-1}(0)=0$, therefore it is subadditive and, for any given constant $c_{*}\ge 0$ it is $\ti{E}_{0}^{-1}(c_{*}t)\le (c_{*}+1)\ti{E}_{0}^{-1}(t)$, see for instance \cite[Remark 10 in Section 4]{DM}. Using this last property, we can apply $\ti{E}_{0}^{-1}(\cdot)$ to both sides of \eqref{7.fz} in order to conclude with \eqref{8.fz}. Finally, the choice of the function $\mu_n(\delta)$ mentioned in the statement, that we can always take such that $\mu_n(\delta)<3/2$, comes by the choices made in \rif{sce1}, \rif{sce2}, \rif{sce3} and \rif{sce4} and a standard continuity argument (for this recall that both $\gamma_*(\cdot)$ and $\tilde{\gamma}(\cdot)$ are increasing functions of their arguments and that $\chi$ depends on $n$).  Notice that by the above construction it follows that $\mu_n(0_+)=1$. \end{proof}

\section{Proof of Theorem \ref{t3}}\label{dimos}
The proof will take eleven different steps, distributed along Sections \ref{assenza}-\ref{ultimina} below. We shall concentrate on the singular case $\bb=0$, then giving remarks on how to deal with the (actually simpler) case $\bb >0$ at the very end of Section \ref{holsec}.  We shall start assuming that \rif{assif}-\rif{0.1} for some $\mu \in [1, 3/2)$. We shall make further restrictions on the size of $\mu$ in the course of the proof until we finally come to determine the value $\mum$ mentioned in the statement of Theorem \ref{t3}. 
 \subsection{Absence of Lavrentiev phenomenon}\label{assenza}
We have the following approximation-in-energy result that actually implies the absence of Lavrentiev phenomenon:
\begin{lemma}\label{nolav}
With $H(\cdot)$ defined in \trif{hhh0}, assume \eqref{bound}$_1$ and that $F(\cdot)$ satisfies \trif{assif}$_1$ where $g(\cdot)$ is as in Theorem \ref{t3} (described after \trif{assif}). Let $w\in W^{1,1}_{\loc}(\Omega)$ be any function such that $H(\cdot, Dw)\in L^1_{\loc}(\Omega)$. For every ball $B \Subset \Omega$ there exists a sequence $\{w_{\varepsilon}\}\subset W^{1,\infty}(B)$ such that
$w_{\varepsilon}\to w$ in  $W^{1,1}(B)$ and $H(\cdot,Dw_{\varepsilon})\to H(\cdot,Dw)  $ in $L^{1}(B)$.
\end{lemma}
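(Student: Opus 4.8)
The plan is to construct the approximating sequence by mollification. Fix a ball $B\Subset\Omega$ and choose a slightly larger concentric ball $B'$ with $B\Subset B'\Subset\Omega$; let $\{\phi_\eps\}$ be a family of standard radial mollifiers and set $w_\eps:=w*\phi_\eps$, which is well defined and smooth on $B$ as soon as $\eps$ is smaller than the distance between $B$ and $\partial B'$. Since $w\in W^{1,1}(B')$ and $H(\cdot,Dw)\in L^{1}(B')$, we have $F(\cdot,Dw),\,a(\cdot)|Dw|^{q}\in L^{1}(B')$ and $|Dw|\,g(|Dw|)\le F(\cdot,Dw)/\nu\in L^{1}(B')$ by \rif{assif}$_1$. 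Standard properties of mollifiers give $w_\eps\in W^{1,\infty}(B)$, $w_\eps\to w$ in $W^{1,1}(B)$, and $Dw_\eps=(Dw)*\phi_\eps\to Dw$ a.e.\ in $B$ (at every Lebesgue point of $Dw$); consequently $F(x,Dw_\eps(x))\to F(x,Dw(x))$ and $a(x)|Dw_\eps(x)|^{q}\to a(x)|Dw(x)|^{q}$ for a.e.\ $x\in B$, by continuity of $z\mapsto F(x,z)$. It then remains to establish the $L^{1}(B)$ convergences $F(\cdot,Dw_\eps)\to F(\cdot,Dw)$ and $a(\cdot)|Dw_\eps|^{q}\to a(\cdot)|Dw|^{q}$; adding these gives $H(\cdot,Dw_\eps)\to H(\cdot,Dw)$ in $L^{1}(B)$ and completes the proof.

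For the $F$-term, the map $t\mapsto tg(t)$ is convex and non-decreasing (recall $g\ge 1$ is non-decreasing), hence $z\mapsto|z|\,g(|z|)$ is convex on $\er^{n}$, and Jensen's inequality gives $|Dw_\eps(x)|\,g(|Dw_\eps(x)|)\le\big((|Dw|\,g(|Dw|))*\phi_\eps\big)(x)$ for a.e.\ $x\in B$; the right-hand side converges to $|Dw|\,g(|Dw|)$ in $L^{1}(B)$. By \rif{assif}$_1$ then $0\le F(\cdot,Dw_\eps)\le L\big(1+(|Dw|\,g(|Dw|))*\phi_\eps\big)$, a sequence converging in $L^{1}(B)$; combining with the pointwise a.e.\ convergence above and the generalized dominated convergence theorem yields $F(\cdot,Dw_\eps)\to F(\cdot,Dw)$ in $L^{1}(B)$.

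For the $a$-term, Fatou's lemma gives $\liminf_{\eps}\int_{B}a|Dw_\eps|^{q}\,dx\ge\int_{B}a|Dw|^{q}\,dx$, so the real point is the matching bound for the $\limsup$. Here I would exploit the H\"older continuity of $a$ from \rif{bound}$_1$: whenever $\phi_\eps(x-y)\neq 0$ one has $|x-y|<\eps$, hence $a(x)^{1/q}\le a(y)^{1/q}+c\eps^{\alpha/q}$ with $c\equiv c(q,\nr{a}_{C^{0,\alpha}})$ by subadditivity of $t\mapsto t^{1/q}$. Multiplying by $\phi_\eps(x-y)|Dw(y)|$, integrating in $y$, raising to the power $q$, and using $(A+B)^{q}\le(1+\delta)A^{q}+c(q,\delta)B^{q}$ together with Jensen (so that $\big((a^{1/q}|Dw|)*\phi_\eps\big)^{q}\le(a|Dw|^{q})*\phi_\eps$), we get, for every $\delta\in(0,1)$,
$$
a(x)|Dw_\eps(x)|^{q}\le(1+\delta)\big((a|Dw|^{q})*\phi_\eps\big)(x)+c(q,\delta)\,\eps^{\alpha}\big((|Dw|*\phi_\eps)(x)\big)^{q}\,.
$$
The first term converges in $L^{1}(B)$ to $(1+\delta)\,a|Dw|^{q}$, since $a|Dw|^{q}\in L^{1}(B')$. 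The second term is the only spot where the sharp bound $q<1+\alpha/n$ is used: since $|Dw|*\phi_\eps\le c\eps^{-n}\nr{Dw}_{L^{1}(B')}$ on $B$ while $\int_{B}|Dw|*\phi_\eps\,dx\le\nr{Dw}_{L^{1}(B')}$, peeling off a single factor gives
$$
\eps^{\alpha}\int_{B}\big((|Dw|*\phi_\eps)(x)\big)^{q}\,dx\le c\,\eps^{\,\alpha-n(q-1)}\nr{Dw}_{L^{1}(B')}^{q}\longrightarrow 0\,,
$$
because $\alpha-n(q-1)>0$. Hence $\limsup_{\eps}\int_{B}a|Dw_\eps|^{q}\,dx\le(1+\delta)\int_{B}a|Dw|^{q}\,dx$ for every $\delta\in(0,1)$; letting $\delta\to 0$ and combining with the Fatou bound gives $\int_{B}a|Dw_\eps|^{q}\,dx\to\int_{B}a|Dw|^{q}\,dx$, and since $a|Dw_\eps|^{q}\to a|Dw|^{q}$ a.e.\ as well, Scheff\'e's lemma upgrades this to convergence in $L^{1}(B)$.

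The only genuinely delicate point is the control of the error term $\eps^{\alpha}\int_{B}(|Dw|*\phi_\eps)^{q}\,dx$: the obstruction is that $|Dw|$ need not be $q$-integrable --- on $\{a\equiv 0\}$ one only controls $|Dw|\,g(|Dw|)\in L^{1}$ --- so $|Dw|^{q}$ cannot be pushed through the convolution. The device above circumvents this by keeping exactly one factor $|Dw|*\phi_\eps$ inside the integral, where it is uniformly $L^{1}$-bounded, and absorbing the remaining $q-1$ factors into the $L^{\infty}$-blow-up of $|Dw|*\phi_\eps$, which is of order $\eps^{-n}$; this blow-up is beaten by the gain $\eps^{\alpha}$ produced by the H\"older continuity of $a$ precisely when $q<1+\alpha/n$, i.e.\ under \rif{bound}$_1$. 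All remaining steps are routine manipulations with mollifiers, Jensen's inequality, and (generalized) dominated convergence.
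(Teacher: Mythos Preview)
Your proof is correct and follows essentially the same mollification argument that the paper invokes by reference to \cite[Section~5]{dm} and \cite[Lemma~13]{sharp}: Jensen via the convexity of $t\mapsto tg(t)$ handles the $F$-term, while for the $a$-term the H\"older continuity of $a(\cdot)$ produces the error $\eps^{\alpha}\int_B(|Dw|*\phi_\eps)^{q}\,dx$, which is then killed by the $L^1$--$L^\infty$ interpolation of the mollified gradient precisely under the condition $q<1+\alpha/n$. This is exactly the mechanism the paper has in mind, and your identification of the delicate step --- that $|Dw|\notin L^{q}$ in general, so one cannot push $|\cdot|^{q}$ through the convolution --- is the point of the whole construction.
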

The proof follows \cite[Section 5]{dm} almost verbatim (see also \cite[Lemma 13]{sharp}). Notice that this is essentially the only point where the assumed convexity of $t \mapsto tg(t)$ is used (this implies that $z \mapsto |z|g(|z|)$ is convex as $g(\cdot)$ is also non-decreasing). For more related results on the absence of Lavrentiev phenomenon we refer to the recent papers \cite{AFM, balci, balci2, buli, koch1, koch2} and related references.   
\subsection{Auxiliary Dirichlet problems and convergence}\label{aux}
In the following $u\in W^{1,1}_{\loc}(\Omega)$ denotes a local minimizer of $\mathcal{N}(\cdot)$, as in Theorem \ref{t3}. By $\omega, \eps\equiv\{\omega\},  \{\eps\} \equiv  \{\omega_k\}_k, \{\eps_k\}_k,$ we denote two decreasing sequences of positive numbers such that $\omega, \eps \to 0$, and $\eps,\omega \leq 1$; we will several times extract subsequences and these will still be denoted by $\omega, \eps$ (for this reason we drop the pendice $k$). We denote by $\texttt{o}(\varepsilon)$ a quantity such that $\texttt{o}(\varepsilon)\to 0$ as $\eps \to 0$. Similarly, we denote by $\texttt{o}_{\eps}(\omega)$ a quantity, depending both on $\eps$ and $\omega$, such that $\texttt{o}_{\eps}(\omega)\to 0$ as $\omega \to 0$ for each fixed $\eps$. The exact value of such quantities might change on different occurences and only the aforementioned asymptotic properties will matter. Let $B_{r}\Subset \Omega$ be a ball with $0< r \leq 1$; by Lemma \ref{nolav}, there exists a sequence $\{\ti{u}_{\varepsilon}\}\in W^{1, \infty}(B_{r})$ so that
\eqn{1}
$$
\ti{u}_{\varepsilon}\to u \ \ \mbox{in} \ \ W^{1,1}(B_{r})\quad \ \ \mbox{and}\quad \ \ \mathcal{N}(\ti{u}_{\varepsilon},B_{r}) = \mathcal{N}(u,B_{r})+\texttt{o}(\varepsilon) \,.
$$
We define the sequence
\eqn{2}
$$
\sigma_{\eps}:=\left(1+\varepsilon^{-1}+\nr{D\ti{u}_{\varepsilon}}_{L^{q}(B_{r})}^{2q}\right)^{-1} \ \Longrightarrow \ \sigma_{\eps}\int_{B_{r}}[\ell_{\omega}(D\ti{u}_{\varepsilon})]^{q} \dx \to 0
$$
(uniformly with respect to $\omega \in (0,1]$). 
Then we consider $u_{\omega,\varepsilon}\in \ti{u}_{\varepsilon}+W^{1,q}_{0}(B_{r})$ as the unique solution to the Dirichlet problem
\eqn{pd}
$$
u_{\omega,\varepsilon} \mapsto \min_{w\in \ti{u}_{\varepsilon}+W^{1,q}_{0}(B_{r})} \mathcal{N}_{\omega,\varepsilon}(w,B_{r})
$$
where 
\eqn{approssimaF}
$$
\mathcal{N}_{\omega,\varepsilon}(w,B_{r}):=\int_{B_{r}} H_{\omega,\sigma_{\eps}}(x,Dw) \dx\,.
$$
Recall that the integrand $H_{\omega,\sigma_{\eps}}(\cdot)$ has been defined in \rif{defiH}, with $\sigma\equiv \sigma_{\eps}$. 
The solvability of \rif{pd} follows by Direct Methods and standard convexity arguments. By \eqref{regecor}, we can apply the by now classical regularity theory contained in \cite[Chapter 8]{giu} and \cite{manth1, manth2}, therefore
\eqn{3}
$$
u_{\omega,\varepsilon}\in C^{1,\beta}_{\loc}(B_{r})\quad \mbox{for some} \ \ \beta\equiv \beta(n,
\nu, L, q,\omega, \varepsilon)\in (0,1)\,.
$$
Mean value theorem and \rif{2} imply
\begin{flalign}
\notag |\mathcal{N}_{\omega,\varepsilon}(\ti{u}_{\varepsilon},B_{r})-\mathcal{N}(\ti{u}_{\varepsilon},B_{r})| & \leq
 c \omega \int_{B_{r}} (|D\ti{u}_{\varepsilon}|^{q-1}+1)\dx \notag
 \\ \notag & \qquad +    \sigma_{\eps}\int_{B_{r}}[\ell_{\omega}(D\ti{u}_{\varepsilon})]^{q} \dx \\
 &  =  \texttt{o}_{\eps}(\omega)+\texttt{o}(\varepsilon) \,.
 \label{22}
\end{flalign}
Similarly, noting that $|z|^{q-1} \leq |z| +1 \leq c F(x,z) +c$ by \rif{assif}$_1$ and $q <3/2$ (follows from \rif{bound}$_1$), we find, using also mean value theorem
\begin{flalign}
\notag  & \left|\mathcal{N}_{\omega,\varepsilon}(u_{\omega, \varepsilon},B_{r})-\mathcal{N}(u_{\omega, \varepsilon},B_{r})- \sigma_{\eps}\int_{B_{r}}[\ell_{\omega}(Du_{\omega, \varepsilon})]^{q} \dx\right| \\ 
& \quad \quad \leq
 c \omega \int_{B_{r}}( |Du_{\omega, \varepsilon}|^{q-1}+1)\dx \leq c \omega  \mathcal{N}_{\omega,\varepsilon}(u_{\omega, \varepsilon},B_{r})+c\omega \,.
 \label{22d}
\end{flalign}
In turn, using in order: the minimality of $u_{\omega,\varepsilon}$, \rif{1} and \rif{22}, we have 
\begin{flalign}
\mathcal{N}_{\omega,\varepsilon}(u_{\omega,\varepsilon},B_{r})&\le \mathcal{N}_{\omega,\varepsilon}(\ti{u}_{\varepsilon},B_{r})\nonumber \\
&\le\mathcal{N}(\ti{u}_{\varepsilon},B_{r})+\snr{\mathcal{N}_{\omega,\varepsilon}(\ti{u}_{\varepsilon},B_{r})-\mathcal{N}(\ti{u}_{\varepsilon},B_{r})}\nonumber \\
&\leq \mathcal{N}(u,B_{r})+\texttt{o}_{\eps}(\omega)+\texttt{o}(\varepsilon) \label{24}
\end{flalign} 
so that, using the content of the last two displays we gain 
\begin{flalign} 
 \notag & \mathcal{N}(u_{\omega, \varepsilon},B_{r}) +\sigma_{\eps}\int_{B_{r}}[\ell_{\omega}(Du_{\omega, \varepsilon})]^{q} \dx \\
 & \qquad \ \  = \mathcal{N}_{\omega,\varepsilon}(u_{\omega, \varepsilon},B_{r}) + \texttt{o}_{\eps}(\omega)+\texttt{o}(\varepsilon) + c\omega \,.
\label{24.5}
\end{flalign} 
Estimate \eqref{24} and \rif{rege.2}$_1$ imply that for every $\varepsilon\in (0,1)$ the sequence $\{u_{\omega,\eps}\}_{\omega}$ is uniformly bounded in $W^{1,q}(B_{r})$, therefore, up to not relabelled subsequences, we have
\eqn{23}
$$
u_{\omega,\eps}\rightharpoonup u_{\varepsilon}\ \ \mbox{weakly in} \ \ W^{1,q}(B_{r})\quad \mbox{and}\quad u_{\varepsilon}-\ti{u}_{\varepsilon} \in W^{1,q}_0(B_{r})
$$
as $\omega \to 0$. Letting $\omega \to 0$ in \rif{24.5} and using standard weak lower semicontinuity theorems, yields
$$
\mathcal{N}(u_{\varepsilon},B_{r}) \leq \liminf_{\omega \to 0} \mathcal{N}(u_{\omega, \varepsilon},B_{r})\leq  \liminf_{\omega \to 0}\mathcal{N}_{\omega,\varepsilon}(u_{\omega,\varepsilon},B_{r}) +\texttt{o}(\varepsilon) 
$$ 
for every fixed $\eps \in (0,1)$. 
Using \rif{24} we conclude with 
\eqn{25}
$$
\mathcal{N}(u_{\varepsilon},B_{r})  \le\mathcal{N}(u,B_{r})+\texttt{o}(\varepsilon)
$$
and again this holds for every $\eps\in (0,1)$. 
By \rif{assif}$_1$ and \eqref{25} the sequence $\{\snr{Du_{\varepsilon}}g(Du_{\varepsilon})\}$ is uniformly bounded in $L^{1}(B_{r})$. Recalling that the assumptions on $g(\cdot)$ imply that $g(t)\to \infty$ as $t\to \infty$, by classical results of Dunford \& Pettis and de la Vall\'ee
Poussin, there exists $\hat{u}\in W^{1,1}(B_{r})$ such that
$
u_{\varepsilon}\rightharpoonup \hat{u}$ weakly in $W^{1,1}(B_{r})$ and $\hat{u}-u \in W^{1,1}_0(B_{r})$. 
Letting $\varepsilon\to 0$ in \eqref{25} weak lower semicontinuity (see \cite[Theorem 4.3]{giu}) and \rif{2} yield $\mathcal{N}(\hat{u},B_{r})\le \mathcal{N}(u,B_{r})$, while the opposite inequality follows by the minimality of $u$. We conclude with $\mathcal{N}(\hat{u},B_{r})= \mathcal{N}(u,B_{r})$, so that, by strict convexity of the functional $w \mapsto \mathcal{N}(w,B_{r})$ we find that $u\equiv \hat{u}$ in $B_{r}$ and we deduce that
\eqn{27}
$$
 u_{\varepsilon}\rightharpoonup u \ \ \mbox{weakly in} \ \ W^{1,1}(B_{r})\,.
$$
\subsection{Blow-up}
We fix $\omega, \varepsilon \in (0,1]$ and $u_{\omega,\varepsilon}\in W^{1,q}(B_{r})$ as in \eqref{pd}. With $B_{\rr}(x_{0})\Subset B_{r}$ being a ball not necessarily concentric to $B_{r}$, we take $\M$ such that
\eqn{mmm}
$$
\mathfrak{M}\ge \nr{\ti{E}_{\omega,\sigma_{\eps}}(\cdot,\snr{Du_{\omega,\varepsilon}})}_{L^{\infty}(B_{\rr}(x_{0}))}+ 1\,,
$$
where $\ti E_{\omega,\sigma_{\eps}}(\cdot)$ is defined in \rif{eiit}. The above quantities are finite by \rif{3}. 
We rescale $u$ and $H(x,z)=F(x,z)+a(x)|z|^q$ on $B_{\rr}(x_{0})$ defining
\eqn{notational0}
$$\begin{cases} 
\ \displaystyle u_{\omega, \varepsilon,\rr}(x):= u_{\omega, \eps}(x_{0}+\rr x)/\rr\\
\ F_{\rr}(x,z):= F(x_0+\rr x,z),\quad   \mathcal{a}_{\rr}(x):= a(x_0+\rr x)\\
\ \mathcal H_{\rr}(x, z) := H(x_0+\rr x, z)= F_{\rr}(x, z)+ \mathcal{a}_{\rr}(x)|z|^q\,,
\end{cases}
$$
with $(x,z) \in \mathcal B_{1}\times  \er^n$. Note that, obviously, $\mathcal H_{\rr}(\cdot)$ is still an integrand of the type in \rif{hhh0} (see also \rif{vedisotto} below) and therefore the content of Section \ref{const} applies to $\mathcal H_{\rr}(\cdot)$ as well. Since $u_{\omega, \eps}$ solves \eqref{pd}, recalling the notation fixed in \rif{defiH}, it follows that $u_{\omega, \varepsilon,\rr}\in W^{1,q}(\mathcal B_{1})$ is a local minimizer on $\mathcal B_{1}$ of the functional
$$
W^{1,q}(\mathcal B_{1})\ni w\mapsto \int_{\mathcal B_{1}}(\mathcal H_{\rr})_{\omega, \sigma_\eps}(x,Dw) \dx
$$
where, recalling the notation in \rif{aii}$_1$, with $(x,z) \in \mathcal B_{1}\times  \er^n$ it is 
\eqn{vedisotto}
$$
\begin{cases}
(\mathcal H_{\rr})_{\omega, \sigma_\eps}(x,z)=F_{\rr}(x, z)+(\mathcal{a}_{\rr})_{\sigma_{\eps}}(x)[\ell_{\omega}(z)]^{q}\\
(\mathcal{a}_{\rr})_{\sigma_{\eps}}(x)= \mathcal{a}_{\rr}(x)+\sigma_{\eps}=
a(x_0+\rr x)+\sigma_{\eps} \,.
 \end{cases}
$$
From now on, keeping fixed the choice of $\omega, \eps$ made at the beginning, in order to simplify the notation we shall omit to specify dependence on such parameters, simply abbreviating
\eqn{abb}
$$
u_{\rr}(x)\equiv u_{\omega, \varepsilon,\rr}(x), \quad  
H_{\rr}(x, z)\equiv (\mathcal H_{\rr})_{\omega, \sigma_\eps}(x,z)
$$
for $(x,z) \in \mathcal B_1\times \er^n$. 
The minimality of $u_{\rr}$ implies the validity of the Euler-Lagrange equation
\eqn{el}
$$
\int_{\mathcal B_{1}}\langle\partial_z  H_{\rr}(x,Du_{\rr}), D\varphi \rangle \dx=0\quad \mbox{for all} \ \ \varphi\in W^{1,q}_{0}(\mathcal B_{1})\,.
$$
By \rif{rege.2} the integrand $H_{\rr}(\cdot)$ satisfies
\eqn{assr}
$$
\begin{cases}
\ \lambda_{\rr}(x,\snr{z})\snr{\xi}^{2}\le c\langle\partial_{zz}H_{\rr}(x,z)\xi,\xi\rangle\\
\  \snr{ \partial_{zz}H_{\rr}(x,z)}\le c\Lambda_{\rr}(x,\snr{z})
\\
\ \snr{\partial_z  H_{\rr}(x,z)-\partial_z  H_{\rr}(y,z)}\\
\qquad \le c\rr^{\ao}\snr{x-y}^{\ao}g(|z|)+ c \rr^{\alpha}\snr{x-y}^{\alpha}[\ell_{\omega}(z)]^{q-1}
\end{cases}
$$
for any $x,y\in \mathcal B_{1}$ and all $z,\xi\in \mathbb{R}^{n}$, where $c\equiv c (\data)$ and, according to the definitions in \rif{autovalori} and the notation in \rif{abb}, we are denoting
\eqn{nuoviaut0}
$$
\lambda_{\rr}(x,\snr{z}):=\lambda_{\omega,\sigma_\varepsilon}(x_{0}+\rr x,\snr{z})\,, \quad \Lambda_{\rr}(x,\snr{z}):=\Lambda_{\omega,\sigma_\varepsilon}(x_{0}+\rr x,\snr{z})\,.
$$
\subsection{Minimal integrands}\label{minisec}
Here we are going to play with auxiliary functionals whose integrands are of the type in \rif{defiH}$_2$, and therefore featuring no explicit dependence on $x$ (these are usually called ``frozen'' integrands). The results of Section \ref{rere} can be therefore applied. Let us fix a number $\beta_{0}\in (0,1)$, to be determined in a few lines, and a vector $h\in \mathbb{R}^{n}\setminus \{0\}$ such that
\eqn{hhh}
$$
0< \snr{h}\leq 
\frac{1}{2^{8/\beta_{0}}}\,.
$$
We take $x_{\rm c}\in \mathcal B_{1/2+ 2|h|^{\beta_0}}$ and fix a ball centered at $x_{\rm c}$ with radius $\snr{h}^{\beta_{0}}$, denoted by $B_h\equiv  B_{\snr{h}^{\beta_{0}}}(x_{\rm c})$. By \eqref{hhh} we have $8B_h\Subset \mathcal B_{1}$. We set
\eqn{mmm0} 
$$
\mm\equiv \mm(8B_h):= \nr{Du_{\rr}}_{L^{\infty}(8B_h)}+1\,.
$$
According to the notation established in \rif{aii}, \rif{defiH} and \rif{vedisotto}-\rif{abb}, we define
\eqn{deffiH}
$$
\begin{cases} 
H_{\rr,\textnormal{i}}(z)\equiv (\mathcal H_{\rr})_{\omega, \sigma_{\eps}, \textnormal{i}}(z;8B_h)\equiv  F_{\rr}(x_{\rm c}, z)+\tilde a_{\rr, \textnormal{i}}(8B_h)[\ell_{\omega}(z)]^{q}\\
\displaystyle \tilde a_{\rr, \textnormal{i}}(8B_h):=(\mathcal{a}_{\rr})_{\sigma_{\eps}, \textnormal{i}}(8B_h)=\inf_{x\in 8B_h}(\mathcal{a}_{\rr})_{ \sigma_{\eps}}(x)=\inf_{x\in 8B_h}a(x_0+\rr x)+\sigma_{\eps}\,.
\end{cases}
$$
Note that $\partial_z  H_{\rr,\textnormal{i}}\in C^{1}(\mathbb{R}^{n};\mathbb{R}^{n})$. As in \rif{ill}, with
      $$
  \begin{cases}
\lambda_{\rr,\textnormal{i}}(\snr{z}) \displaystyle \equiv \lambda_{\rr,\textnormal{i}}(\snr{z}; 8B_h):=(|z|^2+1)^{-\mu/2}+ (q-1)\tilde a_{\rr, \textnormal{i}}(8B_h)[\ell_{\omega}(z)]^{q-2}\\
 \Lambda_{\rr,\textnormal{i}}(\snr{z}) \equiv \Lambda_{\rr,\textnormal{i}}(\snr{z};8B_h):=(|z|^2+1)^{-1/2}g(|z|)+\tilde a_{\rr, \textnormal{i}}(8B_h)[\ell_{\omega}(z)]^{q-2}\,,
    \end{cases}
    $$
from \rif{0.1} and \rif{rege.2i} it follows that
\eqn{5}
$$
\begin{cases}
\ 
\sigma_{\eps} [\ell_{\omega}(z)]^{q}\le H_{\rr,\textnormal{i}}(z)\le c\left([\ell_{\omega}(z)]^{q}+1\right)\\
\  \snr{\partial_z H_{\rr,\textnormal{i}}(z)}\le c\left([\ell_{\omega}(z)]^{q-1}+1\right)\\
\ \lambda_{\rr,\textnormal{i}}(\snr{z})\snr{\xi}^{2}\le c\langle \partial_{zz}H_{\rr,\textnormal{i}}(z)\xi,\xi\rangle, \quad \snr{\partial_{zz}H_{\rr,\textnormal{i}}(z)}\le c \Lambda_{\rr,\textnormal{i}}(\snr{z})
\\
\ \mathcal{V}_{\rr, \textnormal{i}}^{2}(z_{1},z_{2};8B_h)\leq c\langle \partial_z  H_{\rr,\textnormal{i}}(z_{1})-\partial_z  H_{\rr,\textnormal{i}}(z_{2}),z_{1}-z_{2}\rangle
\end{cases}
$$
hold for all $z,z_{1},z_{2},\xi\in \mathbb{R}^{n}$, with $c\equiv c(\data)$. Consistently with \rif{vvv}, here we are denoting 
\begin{flalign}
\notag \mathcal{V}_{\rr, \textnormal{i}}^{2}(z_1,z_2;8B_h) &:=
\snr{V_{1,2-\mu}(z_{1})-V_{1,2-\mu}(z_{2})}^{2}\\
& \qquad +\tilde a_{\rr, \textnormal{i}}(8B_h)\snr{V_{\omega,q}(z_{1})-V_{\omega,q}(z_{2})}^{2}\,.\label{deffiV}
\end{flalign}
Recalling \rif{eii} and \rif{nuoviaut0}, we define, for $t\geq 0$ and $x \in \mathcal B_1$
\eqn{eiin}
$$
\begin{cases}
\displaystyle E_{\rr}(x,t):=\int_{0}^{t}\lambda_{\rr}(x,s)s\ds\\
\displaystyle E_{\rr,\textnormal{i}}(t)\equiv  E_{\rr,\textnormal{i}}(t;8B_h):=\int_{0}^{t}\lambda_{\rr,\textnormal{i}}(s;8B_h)s\ds
\end{cases}
$$
with related explicit expressions as in \rif{eii2} and
$$
\begin{cases}
\ \ti E_{\rr}(x,t):= \frac{1}{2-\mu}[\ell_{1}(t)]^{2-\mu}+(1-1/q) (\mathcal{a}_{\rr})_{\sigma_{\eps}}(x)[\ell_{\omega}(t)]^{q}\\
\ \ti E_{\rr,\textnormal{i}}(t):= \frac{1}{2-\mu}[\ell_{1}(t)]^{2-\mu}+(1-1/q) \tilde a_{\rr, \textnormal{i}}(8B_h)[\ell_{\omega}(t)]^{q}\,.
\end{cases}
$$
By \rif{0.00} we have 
\eqn{0.00bis}
$$
\snr{E_{\rr}(x,t)- E_{\rr,\textnormal{i}}(t)}\le  c|h|^{\alpha\beta_0} \rr^{\alpha}[\ell_{\omega}(t)]^{q}\quad \mbox{for every $x \in 8B_h$ and $t \geq 0$}
$$
and \rif{mmm}  implies
\eqn{3.1M}
$$
 \mathfrak{M}\ge \nr{\ti E_{\rr}(\cdot,\snr{Du_{\rr}})}_{L^{\infty}(\mathcal B_1)}+ 1\,.
$$
Looking at \rif{5}, Direct Methods and strict convexity provide us with a unique minimizer $v\in  u_{\rr}+W^{1,q}_{0}(8B_h)$ defined by
\eqn{atomi} 
$$
 v \equiv v (8B_h)  \mapsto  \min_{w\in u_{\rr}+W^{1,q}_{0}(8B_h)}\int_{8B_h}H_{\rr,\textnormal{i}}(Dw) \dx
$$
so that, \rif{5}$_1$ obviously implies
\eqn{el2}
$$
\int_{8B_h}\langle\partial_z  H_{\rr,\textnormal{i}}(Dv), D\varphi\rangle \dx=0\qquad \mbox{for all} \ \ \varphi\in W^{1,q}_{0}(8B_h)
$$
and minimality gives
\eqn{enes}
$$
\int_{8B_h}H_{\rr,\textnormal{i}}(Dv) \dx \le \int_{8B_h}H_{\rr,\textnormal{i}}(Du_{\rr}) \dx\,.
$$
\begin{lemma}\label{le2} Let $\delta_{1} \in (0,1)$ and let $\mu_n(\cdot)$ be the (non-decreasing) function introduced in Proposition \ref{fzfz}. If $1\leq \mu < \mu_n(\delta_{1}/2)$, then the inequalities 
\eqn{stim3}
$$
 \nr{Dv}_{L^{\infty}(6B_h)}\le c \mm^{1+\delta_{1}}
$$
and 
\eqn{caccada}
$$
 \int_{2B_h}\snr{D(E_{\rr,\textnormal{i}}(\snr{Dv})-\kk)_{+}}^{2}\dx \le 
 \frac{c\, \mm^{\delta_{1}}}{\snr{h}^{2\beta_{0}}}\int_{4B_h}(E_{\rr,\textnormal{i}}(\snr{Dv})-\kk)_{+}^{2} \dx
$$
hold with $c\equiv c(\data,\delta_{1})$ and $\mm$ is defined in \eqref{mmm0}. 
\end{lemma}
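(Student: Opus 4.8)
The plan is to obtain both \eqref{stim3} and \eqref{caccada} from a single application of Proposition \ref{fzfz} to the frozen minimizer $v$ defined in \eqref{atomi}, once the relevant identifications are in place. First I would check that $v$ is exactly an object of the type $v_{\omega,\sigma}$ considered in Section \ref{rere}. Indeed, $H_{\rr,\textnormal{i}}(\cdot)$ has the ``frozen'' form $H_{\omega,\sigma_\eps,\textnormal{i}}(\cdot\,;8B_h)$ from \eqref{defiH}$_2$, built over the integrand $F_\rr(x,z)=F(x_0+\rr x,z)$; this $F_\rr$ still satisfies \eqref{assif} with the same constants, since \eqref{assif}$_{1,2}$ are unaffected by translation and dilation, while in \eqref{assif}$_3$ the rescaling only produces the harmless factor $\rr^{\ao}\le 1$. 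Moreover $u_\rr\in W^{1,\infty}_{\loc}(\mathcal B_1)$ by \eqref{3} and $8B_h\Subset\mathcal B_1$ by \eqref{hhh}, so $u_\rr\in W^{1,\infty}(8B_h)$ is an admissible datum, and $v$ solves \eqref{pdfz} with $B_\tau\equiv 8B_h$, $u_0\equiv u_\rr$, $\sigma\equiv\sigma_\eps$. Hence Proposition \ref{fzfz} applies with $\delta\equiv\delta_1/2$, which is exactly the point where the hypothesis $1\le\mu<\mu_n(\delta_1/2)$ of the lemma enters.

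For \eqref{stim3}: since here $\tau=8\snr{h}^{\beta_0}$ we have $B_{3\tau/4}=6B_h$, so \eqref{8.fz} reads
$$
\nr{Dv}_{L^{\infty}(6B_h)}\le c\,\eh_{\rr,\textnormal{i}}^{\delta_1/2}\!\left(\nr{Du_\rr}_{L^\infty(8B_h)}\right)\nr{Du_\rr}_{L^\infty(8B_h)}+c\,,
$$
with $c\equiv c(\data,\delta_1)$. As $\eh_{\rr,\textnormal{i}}$ is non-decreasing by \eqref{defiHHH} and $\nr{Du_\rr}_{L^\infty(8B_h)}\le\mm$, it remains to estimate $\eh_{\rr,\textnormal{i}}(\mm)$; using $\tilde a_{\rr,\textnormal{i}}(8B_h)\le\nr{a}_{L^\infty}+\sigma_\eps\le c(\data)$, $\omega\le1$, the bound \eqref{0.1} (say with $\eps=1$), and $q<3/2$, $\mm\ge1$, one finds $\eh_{\rr,\textnormal{i}}(\mm)\le c\mm^{2}$, whence $\eh_{\rr,\textnormal{i}}^{\delta_1/2}(\mm)\le c\mm^{\delta_1}$, and \eqref{stim3} follows.

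For \eqref{caccada}: I would apply the renormalized Caccioppoli inequality \eqref{10.fz2} (again with $\delta\equiv\delta_1/2$) on $B\equiv 4B_h$, which satisfies $B\Subset 8B_h$ and $3B/4=3B_h\supseteq 2B_h$, and with $M\equiv c\,\mm^{1+\delta_1}$ chosen large enough that $\nr{Dv}_{L^\infty(4B_h)}+1\le\nr{Dv}_{L^\infty(6B_h)}+1\le M$ by \eqref{stim3} (legitimate since $\mm\ge1$). This yields
$$
\mint_{3B_h}\snr{D(E_{\rr,\textnormal{i}}(\snr{Dv})-\kk)_{+}}^{2}\dx\le\frac{c\,M^{\delta_1/2}}{\snr{4B_h}^{2/n}}\mint_{4B_h}(E_{\rr,\textnormal{i}}(\snr{Dv})-\kk)_{+}^{2}\dx\,,
$$
and since $M^{\delta_1/2}=c\,\mm^{(1+\delta_1)\delta_1/2}\le c\,\mm^{\delta_1}$ and $\snr{4B_h}^{2/n}=c(n)\snr{h}^{2\beta_0}$, passing from averages to integrals (using $2B_h\subset 3B_h$ and $\snr{3B_h}\le\snr{4B_h}$, together with non-negativity of the integrands) gives exactly \eqref{caccada}.

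The argument is essentially bookkeeping, and the only point requiring genuine care — and the only place where $\delta_1<1$ is actually used — is that the power of $\mm$ produced in the Caccioppoli step, namely $M^{\delta_1/2}=c\,\mm^{(1+\delta_1)\delta_1/2}$, must collapse to at most $\mm^{\delta_1}$; this holds precisely because $(1+\delta_1)/2<1$ and $\mm\ge1$. The remaining ingredients (the chain of inclusions $2B_h\subset 3B_h\subset 4B_h\subset 6B_h=B_{3\tau/4}\subset 8B_h$ and the elementary estimate $\eh_{\rr,\textnormal{i}}(\mm)\le c\mm^{2}$) are routine.
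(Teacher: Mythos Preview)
Your proof is correct and follows essentially the same approach as the paper's: apply Proposition~\ref{fzfz} to $v$ on $B_\tau=8B_h$ with boundary datum $u_\rr$, use \eqref{8.fz} to obtain \eqref{stim3}, then feed \eqref{stim3} into \eqref{10.fz2} with $\delta=\delta_1/2$ and observe $(1+\delta_1)\delta_1/2\le\delta_1$ to get \eqref{caccada}. The only cosmetic differences are that the paper chooses $\delta=\delta_1/q$ (rather than $\delta_1/2$) in \eqref{8.fz} and $\eps=q-1$ (rather than $\eps=1$) in \eqref{0.1}, giving $\eh_{\rr,\textnormal{i}}(\mm)\le c\mm^q$ instead of your $c\mm^2$, and that it states the Caccioppoli step directly between $2B_h$ and $4B_h$ rather than going through $3B_h$; none of this matters.
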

\vspace{1mm}
\begin{proof}
The integrand $H_{\rr,\textnormal{i}}(\cdot)$ is of the type considered in Proposition \ref{fzfz} (compare with \rif{deffiH} and \rif{5}), and we apply this last result to $v$, with $B_\tau\equiv 8B_h$. It follows that for every $\delta_{1} >0$ 
\begin{eqnarray}
\notag \nr{Dv}_{L^{\infty}(6B_h)} &\stackleq{8.fz} & c \eh_{\rr,\textnormal{i}}^{\delta_{1}/q}\left(\nr{Du_{\rr}}_{L^{\infty}(8B_h)}\right)\nr{Du_{\rr}}_{L^{\infty}(8B_h)}+c\\
&\stackleq{mmm0} & c \eh_{\rr,\textnormal{i}}^{\delta_{1}/q}(\mathfrak{m})\mathfrak{m}+c \label{9.fz}
\end{eqnarray}
holds provided $1\leq \mu< \mu_n(\delta_{1}/2)\leq  \mu_n(\delta_{1}/q)$, where $c\equiv c(\data,\delta_{1})$. Here, as in \rif{defiH}$_3$, it is 
$
\eh_{\rr,\textnormal{i}}(t) := tg(t)+\tilde a_{\rr, \textnormal{i}}(8B_h)[t^2+\omega^2]^{q/2}+1
$ for  $t\geq 0$ and in fact in \rif{9.fz} we have used \rif{defiHHH}. 
From \rif{9.fz} we can derive \rif{stim3} using \rif{0.1} with $\eps =q-1>0$. Next, we apply \rif{10.fz2}, that gives
\begin{flalign*}
  &\int_{2B_h}\snr{D(E_{\rr,\textnormal{i}}(\snr{Dv})-\kk)_{+}}^{2}\dx \\
  & \qquad \le 
 \frac{c\left(\|Dv\|_{L^\infty(4B_h)}+1\right)^{\delta_{1}/2}}{\snr{h}^{2\beta_{0}}}\int_{4B_h}(E_{\rr,\textnormal{i}}(\snr{Dv})-\kk)_{+}^{2} \dx
\end{flalign*}
so that \rif{caccada} follows using \rif{stim3} in the above inequality and observing that $(1+\delta_1)\delta_1/2 \leq \delta_1$. 
\end{proof}
\subsection{A comparison estimate}\label{hy} This is in the following:
\begin{lemma}\label{comparazione} Let $u_{\rr} \in W^{1,q}(\mathcal B_1)$ be as in \trif{notational0}, $v \in u_{\rr}+W^{1,q}_{0}(8B_h)$ as in \trif{atomi} and $\delta_{2} \in (0,1/2)$. The inequality 
\begin{flalign}\label{10}
\nonumber   \int_{8B_h}\mathcal{V}_{\rr, \textnormal{i}}^{2}(Du_{\rr},Dv;8B_h)  \dx &  \le c\snr{h}^{\beta_{0}\tia}\M^{\frac{1-\delta_{2}/2}{2-\mu}}\rr^{\alpha}\int_{8B_h}(|Du_{\rr}|+1)^{q-1+\delta_{2}} \dx\\
& \quad \ +c\snr{h}^{\beta_{0}\tia}\M^{\frac{1-\delta_{2}/2}{2-\mu}} \rr^{\alpha_0} \int_{8B_h}(|Du_{\rr}|+1)^{3\delta_{2}} \dx
\end{flalign}
holds with $c\equiv c(\data,\delta_{2})$, where
$\tia =\min\{\alpha, \alpha_0\}$, 
$\mathcal{V}_{\rr, \textnormal{i}}(\cdot)$ is as in \trif{deffiV}, and $\M$ is any number satisfying \trif{mmm} and therefore \eqref{3.1M}. 
\end{lemma}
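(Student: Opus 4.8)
The plan is a comparison between the Euler--Lagrange systems of $u_{\rr}$ and of $v$, combined with the monotonicity of $\partial_z H_{\rr,\textnormal{i}}(\cdot)$ in \trif{5}$_{4}$, the smallness of the radius $\snr{h}^{\beta_0}$ of $8B_h$, and the Lipschitz bounds for $Dv$ (Lemma \ref{le2}) and for $Du_{\rr}$ (encoded in $\M$ via \trif{3.1M}).

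First I would test \trif{el2} (for $v$) and \trif{el} (for $u_{\rr}$; legitimate for test functions in $W^{1,q}_{0}(8B_h)$, which extend by zero to elements of $W^{1,q}_{0}(\mathcal B_1)$) with $\varphi:=u_{\rr}-v\in W^{1,q}_{0}(8B_h)$, subtract, and use $\partial_z H_{\rr,\textnormal{i}}(Du_{\rr})$ as a pivot, arriving at
$$
\int_{8B_h}\langle \partial_z H_{\rr,\textnormal{i}}(Du_{\rr})-\partial_z H_{\rr,\textnormal{i}}(Dv),\, Du_{\rr}-Dv\rangle \dx=\int_{8B_h}\langle \partial_z H_{\rr,\textnormal{i}}(Du_{\rr})-\partial_z H_{\rr}(x,Du_{\rr}),\, Du_{\rr}-Dv\rangle \dx\,.
$$
By \trif{5}$_{4}$ the left-hand side controls $c^{-1}\int_{8B_h}\mathcal V_{\rr, \textnormal{i}}^{2}(Du_{\rr},Dv;8B_h)\dx$ from below. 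For the integrand on the right, since $H_{\rr,\textnormal{i}}(z)=F_{\rr}(x_{\rm c},z)+\tilde a_{\rr,\textnormal{i}}(8B_h)[\ell_{\omega}(z)]^{q}$ while $H_{\rr}(x,z)=F_{\rr}(x,z)+(\mathcal a_{\rr})_{\sigma_{\eps}}(x)[\ell_{\omega}(z)]^{q}$, I would split into the $F$-part, estimated by the rescaled assumption \trif{assr}$_{3}$ (i.e.\ \trif{assif}$_{3}$), and the $a$-part, estimated by $\osc_{8B_h}\mathcal a_{\rr}\le [a]_{0,\alpha}(16\rr\snr{h}^{\beta_0})^{\alpha}$ together with $\snr{\partial_z([\ell_{\omega}(z)]^{q})}\le q[\ell_{\omega}(z)]^{q-1}$. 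As every $x\in 8B_h$ is within $8\snr{h}^{\beta_0}$ of $x_{\rm c}$, this gives
$$
\snr{\partial_z H_{\rr,\textnormal{i}}(z)-\partial_z H_{\rr}(x,z)}\le c\rr^{\ao}\snr{h}^{\beta_0\ao}g(\snr{z})+c\rr^{\alpha}\snr{h}^{\beta_0\alpha}[\ell_{\omega}(z)]^{q-1}\le c\snr{h}^{\beta_0\tia}\big(\rr^{\ao}g(\snr{z})+\rr^{\alpha}[\ell_{\omega}(z)]^{q-1}\big),
$$
using $\tia=\min\{\alpha,\ao\}$ and $\snr{h}^{\beta_0}<1$ (by \trif{hhh}).

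It then remains to estimate $\int_{8B_h}\big(\rr^{\ao}g(\snr{Du_{\rr}})+\rr^{\alpha}[\ell_{\omega}(Du_{\rr})]^{q-1}\big)\snr{Du_{\rr}-Dv}\dx$, which I would do after writing $\snr{Du_{\rr}-Dv}\le\snr{Du_{\rr}}+\snr{Dv}$. The $\snr{Du_{\rr}}$-contribution is handled directly: bounding $g(\snr{Du_{\rr}})\le c(\snr{Du_{\rr}}+1)^{\delta_2}$ by \trif{0.1}, using the elementary inequalities $[\ell_{\omega}(t)]^{q-1}t\le c(t+1)^{1-\delta_2}(t+1)^{q-1+\delta_2}$ and $g(t)t\le c(t+1)^{1-2\delta_2}(t+1)^{3\delta_2}$ (all four exponents positive since $\delta_2<1/2<1$ and $q>1$), and finally $\mm\le c\M^{1/(2-\mu)}$ — which follows from $\tilde E_{\rr}(x,t)\ge (2-\mu)^{-1}[\ell_{1}(t)]^{2-\mu}$ and \trif{3.1M}, in the spirit of \trif{0.001} — so that $\mm^{1-\delta_2}$ and $\mm^{1-2\delta_2}$ are both $\le c\M^{(1-\delta_2/2)/(2-\mu)}$; this produces exactly the two terms in the right-hand side of \trif{10} (recall $\ao=\alpha_0$). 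For the $\snr{Dv}$-contribution I would instead resort to Young's inequality: writing $\snr{Du_{\rr}-Dv}\approx(\snr{Du_{\rr}}^2+\snr{Dv}^2+1)^{\mu/4}\snr{V_{1,2-\mu}(Du_{\rr})-V_{1,2-\mu}(Dv)}$ by \trif{Vm} with $p=2-\mu$ and $\snr{V_{1,2-\mu}(Du_{\rr})-V_{1,2-\mu}(Dv)}^{2}\le\mathcal V_{\rr, \textnormal{i}}^{2}(Du_{\rr},Dv;8B_h)$, so that a fraction of $\int_{8B_h}\mathcal V_{\rr, \textnormal{i}}^{2}$ can be reabsorbed; on $6B_h$ the resulting weight $(\snr{Du_{\rr}}^2+\snr{Dv}^2+1)^{\mu/2}$ is tamed via $\nr{Dv}_{L^{\infty}(6B_h)}\le c\mm^{1+\delta_1}$ from Lemma \ref{le2} (with $\delta_1=\delta_1(\delta_2)$ small), while on $8B_h\setminus 6B_h$ one uses only the minimality \trif{enes}, which by \trif{assif}$_{1}$ and \trif{5}$_{1}$ controls $\int_{8B_h}\snr{Dv}g(\snr{Dv})\dx$ and $\int_{8B_h}H_{\rr,\textnormal{i}}(Dv)\dx$ by $c\int_{8B_h}H_{\rr,\textnormal{i}}(Du_{\rr})\dx$; balancing the accumulated powers (with $\rr\le1$, $\snr{h}^{\beta_0}<1$, $\mu$ close to $1$ and $\delta_1$ small) then lands everything inside the right-hand side of \trif{10}.

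The main obstacle is the $\snr{Dv}$-term: the sharp pointwise bound on $Dv$ is available only on $6B_h$, so the annular part $8B_h\setminus 6B_h$ must be absorbed with the weaker energy comparison alone, and one is allowed to spend no more than the factor $\M^{(1-\delta_2/2)/(2-\mu)}$, which is precisely what dictates the interpolations above and the choice of $\delta_1$ — hence of the admissible window $\mu<\mu_n(\delta_1/2)$ inherited from Lemma \ref{le2} — in terms of $\delta_2$. The restriction $q<3/2$, equivalent to \trif{bound}$_{1}$, enters exactly to keep the exponents $1-\delta_2$ and $q-1+\delta_2$ positive in the splitting of $(\snr{Du_{\rr}}+1)^{q}$.
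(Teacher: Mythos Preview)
Your opening steps --- testing \eqref{el} and \eqref{el2} with $\varphi=u_{\rr}-v$, invoking the monotonicity \eqref{5}$_4$, and bounding $\snr{\partial_z H_{\rr,\textnormal{i}}(Du_{\rr})-\partial_z H_{\rr}(x,Du_{\rr})}$ via \eqref{assr}$_3$ --- are exactly what the paper does. The divergence is in how you treat the factor $\snr{Du_{\rr}}+\snr{Dv}$ afterwards, and there your route is both harder and not fully justified.

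The paper does \emph{not} split into a $\snr{Du_{\rr}}$--part and a $\snr{Dv}$--part. Instead it pulls the $L^{\infty}$-norm of the \emph{coefficient} out first: since $\snr{Du_{\rr}}\le\mm$ on $8B_h$, one has $[\ell_{\omega}(Du_{\rr})]^{q-1}\le\mm^{q-1}$ and, by \eqref{0.1}, $g(\snr{Du_{\rr}})\le c\mm^{\delta_2}$. This leaves the factor $\int_{8B_h}(\snr{Du_{\rr}}+\snr{Dv})\dx$, which by \eqref{assif}$_1$ is dominated by $c\int_{8B_h}[H_{\rr,\textnormal{i}}(Du_{\rr})+H_{\rr,\textnormal{i}}(Dv)]\dx$ and hence, via the energy comparison \eqref{enes}, by $c\int_{8B_h}H_{\rr,\textnormal{i}}(Du_{\rr})\dx$. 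Expanding $H_{\rr,\textnormal{i}}(Du_{\rr})\le c(\snr{Du_{\rr}}g(\snr{Du_{\rr}})+\tilde a_{\rr,\textnormal{i}}(8B_h)\snr{Du_{\rr}}^{q}+1)$ and estimating the four resulting cross terms with \eqref{4.1} gives \eqref{10} directly. No Young's inequality, no reabsorption, no use of Lemma~\ref{le2}, no annular decomposition --- and consequently the constant depends only on $\data,\delta_2$ and the result holds for every $\mu\in[1,3/2)$, as the paper explicitly records at the end of the proof.

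Your alternative for the $\snr{Dv}$-contribution is not only heavier but has a genuine gap. After Young's inequality with the $V_{1,2-\mu}$ representation you are left with $\int_{8B_h}(\text{coeff})^{2}(\snr{Du_{\rr}}^{2}+\snr{Dv}^{2}+1)^{\mu/2}\dx$, and on the annulus $8B_h\setminus 6B_h$ you propose to control this ``using only the minimality \eqref{enes}''. But the energy comparison gives you $\int_{8B_h}\snr{Dv}\,g(\snr{Dv})\dx$, not $\int_{8B_h}\snr{Dv}^{\mu}\dx$; for $\mu>1$ the latter cannot be recovered from the former without an $L^\infty$ bound on $Dv$ (which you do not have on the annulus) or without introducing a $\sigma_{\eps}^{-1}$ through the $q$-coercivity in \eqref{5}$_1$, which would be fatal. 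Even if this were patched, your argument would import a dependence on $\delta_1$ and the restriction $\mu<\mu_n(\delta_1/2)$ from Lemma~\ref{le2}, contradicting the stated form $c\equiv c(\data,\delta_2)$. The fix is simply to follow the paper's order of operations: extract the $\mm$-powers from the coefficient \emph{before} touching $\snr{Dv}$, so that only $\int_{8B_h}\snr{Dv}\dx$ needs to be controlled.
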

\begin{proof} From \rif{mmm0} and \rif{3.1M}, and yet recalling \rif{0.001}, we deduce
\eqn{4.1}
$$
\begin{cases}
\ \tilde a_{\rr, \textnormal{i}}(8B_h)[\ell_{\omega}(\mathfrak{m})]^{q}\le c\nr{(\mathcal a_{\rr})_{\sigma_{\eps}}(\cdot)[\ell_{\omega}(Du_{\rr})]^{q}}_{L^{\infty}(8B_h)}+c\le c\mathfrak{M}\\
 \ \mm\leq \ell_{\omega}(\mm)\le c(n,q)\M^{\frac{1}{2-\mu}}\,,
\end{cases}
$$
where $c\equiv c(\mu, q, \|a\|_{L^\infty})$ in \rif{4.1}$_1$.  
We have
\begin{flalign*}
&\int_{8B_h}\mathcal{V}_{\rr, \textnormal{i}}^{2}(Du_{\rr},Dv;8B_h) \dx \\
&\quad \stackrel{\eqref{5}_{4}}{\le} c\int_{8B_h}\langle \partial_z  H_{\rr,\textnormal{i}}(Du_{\rr})-\partial_z  H_{\rr,\textnormal{i}}(Dv),Du_{\rr}-Dv\rangle \dx\nonumber \\
&\quad \stackrel{\eqref{el2}}{=} c\int_{8B_h}\langle\partial_z  H_{\rr,\textnormal{i}}(Du_{\rr}),Du_{\rr}-Dv\rangle \dx\nonumber \\
&\quad \stackrel{\eqref{el}}{=} c\int_{8B_h}\langle\partial_z  H_{\rr,\textnormal{i}}(Du_{\rr})-\partial_z  H_{\rr}(x,Du_{\rr}),Du_{\rr}-Dv\rangle \dx\nonumber \\
&\ \ \ \, \stackrel{\eqref{assr}_{3}}{\le}c\snr{h}^{\beta_{0}\alpha}\rr^{\alpha}\int_{8B_h}[\ell_{\omega}(Du_{\rr})]^{q-1}(\snr{Du_{\rr}}+\snr{Dv}) \dx\nonumber \\
&\quad \qquad \quad  +c\snr{h}^{\beta_{0}\alpha_0}\rr^{\alpha_0}\int_{8B_h}g(|Du_{\rr}|)(\snr{Du_{\rr}}+\snr{Dv})\dx\nonumber \\
&\quad \stackrel{\eqref{0.1}}{\le}  c\snr{h}^{\beta_{0}\tia}(\mm^{q-1}\rr^{\alpha}+\mm^{\delta_{2}}\rr^{\alpha_0})\int_{8B_h}(\snr{Du_{\rr}}+\snr{Dv}) \dx\nonumber \\
&\quad \stackrel{\eqref{assif}}{\le}  c\snr{h}^{\beta_{0}\tia}(\mm^{q-1}\rr^{\alpha}+\mm^{\delta_{2}}\rr^{\alpha_0})\int_{8B_h}[H_{\rr,\textnormal{i}}(Du_{\rr})+H_{\rr,\textnormal{i}}(Dv)] \dx\nonumber \\
&\quad \stackrel{\eqref{enes}}{\le}  c\snr{h}^{\beta_{0}\tia}(\mm^{q-1}\rr^{\alpha}+\mm^{\delta_{2}}\rr^{\alpha_0})\int_{8B_h}H_{\rr,\textnormal{i}}(Du_{\rr}) \dx\nonumber \\
&\quad \stackrel{\eqref{assif}}{\le} c\snr{h}^{\beta_{0}\tia}(\mm^{q-1}\rr^{\alpha}+\mm^{\delta_{2}}\rr^{\alpha_0})\\
&\hspace{13mm} \cdot  \int_{8B_h}(\snr{Du_{\rr}}g(\snr{Du_{\rr}})+\tilde a_{\rr, \textnormal{i}}(8B_h)\snr{Du_{\rr}}^{q}+1) \dx \\
&\quad \stackrel{\eqref{0.1}}{\le} c\snr{h}^{\beta_{0}\tia}(\mm^{q-1}\rr^{\alpha}+\mm^{\delta_{2}}\rr^{\alpha_0}) \int_{8B_h}(\snr{Du_{\rr}}^{1+\delta_{2}/2}+\tilde a_{\rr, \textnormal{i}}(8B_h)\snr{Du_{\rr}}^{q}) \dx \\
& \quad \qquad \quad+ c\snr{h}^{\beta_{0}\tia}(\mm^{q-1}\rr^{\alpha}+\mm^{\delta_{2}}\rr^{\alpha_0})|B_h|\,,
\end{flalign*}
where $c\equiv c(\data,\delta_{2})$. We now estimate the four integrals stemming from the second-last line in the above display. As $\delta_{2}<  1/2$, we have
\begin{flalign*}
 \mm^{\delta_{2}}\rr^{\alpha_0} \int_{8B_h}\snr{Du_{\rr}}^{1+\delta_{2}/2}\dx  & = 
 \mm^{\delta_{2}}\rr^{\alpha_0} \int_{8B_h}\snr{Du_{\rr}}^{1-2\delta_{2}}\snr{Du_{\rr}}^{5\delta_{2}/2}\dx\\
& \leq  c\mm^{1-\delta_{2}}\rr^{\alpha_0} \int_{8B_h}\snr{Du_{\rr}}^{5\delta_{2}/2}\dx\\
 &\leq c\M^{\frac{1-\delta_{2}/2}{2-\mu}}\rr^{\alpha_0} \int_{8B_h}(|Du_{\rr}|+1)^{3\delta_{2}} \dx
\end{flalign*}
and in the last line we have used \rif{4.1}$_2$. Similarly, this time using \rif{4.1}$_1$, we find 
\begin{flalign*}
 & \mm^{\delta_{2}}\rr^{\alpha_0} \int_{8B_h}\tilde a_{\rr, \textnormal{i}}(8B_h)\snr{Du_{\rr}}^{q}\dx  \\ & \qquad  \leq  
c[\tilde a_{\rr, \textnormal{i}}(8B_h)]^{1-\delta_{2}/q}\mm^{\delta_{2}} \rr^{\alpha_0}\int_{8B_h}\snr{Du_{\rr}}^{q-2\delta_{2}}\snr{Du_{\rr}}^{2\delta_{2}} \dx\\
& \qquad \leq  c[\tilde a_{\rr, \textnormal{i}}(8B_h)]^{1-\delta_{2}/q} \mm^{q\left(1-\delta_{2}/q\right)} \rr^{\alpha_0}\int_{8B_h}\snr{Du_{\rr}}^{2\delta_{2}}\dx\\
 &\qquad \leq c\M^{1-\delta_{2}/q} \rr^{\alpha_0}\int_{8B_h}|Du_{\rr}|^{2\delta_{2}} \dx\\
 &\qquad \leq c\M^{\frac{1-\delta_{2}/2}{2-\mu}}\rr^{\alpha_0} \int_{8B_h}(|Du_{\rr}|+1)^{3\delta_{2}} \dx\,.
\end{flalign*}
Next, note that $\delta_{2} < 1/2$ and $q<3/2$ (follows from \rif{bound}$_1$) implies $2-q-\delta_{2}/2>0$ and therefore we can estimate
\begin{flalign*}
 \mm^{q-1}\rr^{\alpha} \int_{8B_h}\snr{Du_{\rr}}^{1+\delta_{2}/2}\dx  & = \mm^{q-1}\rr^{\alpha} \int_{8B_h}\snr{Du_{\rr}}^{2-q-\delta_{2}/2}
 \snr{Du_{\rr}}^{q-1+\delta_{2}}\dx\\
& \leq  c\mm^{1-\delta_{2}/2}\rr^{\alpha} \int_{8B_h}\snr{Du_{\rr}}^{q-1+\delta_{2}}\dx\\
 &\leq c\M^{\frac{1-\delta_{2}/2}{2-\mu}}\rr^{\alpha}\int_{8B_h}|Du_{\rr}|^{q-1+\delta_{2}} \dx
\end{flalign*}
where we again used \rif{4.1}$_2$. By means of \rif{4.1}$_1$  we find
\begin{flalign*}
 & \mm^{q-1}\rr^{\alpha} \int_{8B_h}\tilde a_{\rr, \textnormal{i}}(8B_h)\snr{Du_{\rr}}^{q}\dx  \\ & \qquad \leq  
c[\tilde a_{\rr, \textnormal{i}}(8B_h)]^{1-\delta_{2}/q}\mm^{q-1} \rr^{\alpha}\int_{8B_h}\snr{Du_{\rr}}^{1-\delta_{2}}\snr{Du_{\rr}}^{q-1+\delta_{2}} \dx\\
& \qquad \leq  c[\tilde a_{\rr, \textnormal{i}}(8B_h)]^{1-\delta_{2}/q} \mm^{q\left(1-\delta_{2}/q\right)}\rr^{\alpha} \int_{8B_h}\snr{Du_{\rr}}^{q-1+\delta_{2}} \dx\\
 &\qquad \leq c\M^{1-\delta_{2}/q} \rr^{\alpha}\int_{8B_h}|Du_{\rr}|^{q-1+\delta_{2}} \dx\\
 &\qquad \leq c\M^{\frac{1-\delta_{2}/2}{2-\mu}}\rr^{\alpha}\int_{8B_h}|Du_{\rr}|^{q-1+\delta_{2}} \dx\,.
\end{flalign*}
Finally, observe that $\delta_{2}< 2-q$ as $q < 3/2$ so that
$$
\mm^{q-1}\rr^{\alpha}+\mm^{\delta_{2}}\rr^{\alpha_0} \leq c\M^{\frac{1-\delta_{2}}{2-\mu}}(\rr^{\alpha}+\rr^{\alpha_0}) \leq \M^{\frac{1-\delta_{2}/2}{2-\mu}}(\rr^{\alpha}+\rr^{\alpha_0}) \,.
$$
Merging the content of the last six displays yields
\rif{10} with the asserted dependence of the constant $c$. Notice that the dependence of the constants on $\delta_{2}$ comes from \rif{0.1} (that has been used with $\eps = \delta_{2}/2$). Notice also that \rif{10} holds whenever we are assuming \rif{assif} with $\mu \in [1, 3/2)$. 
\end{proof}
\subsection{A fractional Caccioppoli inequality via nonlinear \\ atomic type decompositions}\label{hy2} 
\begin{lemma}[Fractional Caccoppoli inequality]\label{fracacc}
Let $u_{\omega,\varepsilon}\in W^{1,q}(B_{r})$ be as in \eqref{pd}; fix numbers $\delta_{1}\in (0,1)$, $\delta_{2} \in (0,1/2)$
and a ball $B_{\rr}(x_{0})\Subset B_{r}$. The inequality
\begin{flalign}\label{16}
\notag & \left(\mint_{B_{\rr/2}(x_{0})}(E_{\omega,\sigma_{\eps}}(x,\snr{Du_{\omega,\varepsilon}})-\kk)_{+}^{2\chi} \dx\right)^{1/\chi}\\
& \quad \quad   +\rr^{2\beta-n}[(E_{\omega,\sigma_{\eps}}(\cdot,\snr{Du_{\omega,\varepsilon}})-\kk)_{+}]_{\beta,2;B_{\rr/2}(x_{0})}^2\nonumber \\
&\quad \le c\M^{2\ssss_{1}}\mint_{B_{\rr}(x_{0})}(E_{\omega,\sigma_{\eps}}(x,\snr{Du_{\omega,\varepsilon}})-\kk)_{+}^{2} \dx\nonumber\\
& \notag \qquad  \quad +c\M^{2\ssss_{2}}\rr^{2\alpha}\mint_{B_{\rr}(x_{0})}(|Du_{\omega, \varepsilon}|+1)^{2(q-1+\delta_{2})} \dx\nonumber \\
& \notag  \qquad \quad  +c\M^{2\ssss_{3}}\rr^{\alpha}\mint_{B_{\rr}(x_{0})}(|Du_{\omega, \varepsilon}|+1)^{q-1+\delta_{2}} \dx \\
& \qquad \quad 
+c\M^{2\ssss_{3}}\rr^{\alpha_0}\mint_{B_{\rr}(x_{0})}(|Du_{\omega, \varepsilon}|+1)^{3\delta_{2}} \dx
\end{flalign}
holds whenever
\eqn{ilbeta2}
$$ \displaystyle \beta\in (0,\alpha_{*})\ \  \mbox{with $\chi(\beta):=\frac{n}{n-2\beta}, $\ \ \ $\alpha_{*}:=\frac{\tia}{\tia+2}=\frac{\min\{\alpha, \alpha_0\}}{\min\{\alpha, \alpha_0\}+2}$} \,,$$
and provided 
\eqn{sce5} 
$$\begin{cases}
\, 1\leq \mu < \mu_{n}(\delta_{1}/2)\\
 \, \nr{\ti{E}_{\omega,\sigma_{\eps}}(\cdot,\snr{Du_{\omega,\varepsilon}})}_{L^{\infty}(B_{\rr}(x_{0}))}+ 1 \leq  \mathfrak{M}\,,
 \end{cases}
 $$
where $c\equiv c (\data, \delta_{1}, \delta_{2}, \beta)$ and \eqn{sonot}
$$
\begin{cases}
\displaystyle \
\ssss_{1}\equiv \ssss_{1}(\mu, \delta_{1}):=\frac{\delta_{1}}{2(2-\mu)} \\ 
\displaystyle \ \ssss_{2}\equiv \ssss_{2}(\mu, \delta_{2} ):=\frac{1-\delta_{2}}{2-\mu}\\
 \displaystyle\ \ssss_{3}\equiv \ssss_{3}(\mu, \delta_{1},\delta_{2}):=\frac{3-\mu+(q+1)\delta_{1}-\delta_{2}/2}{2(2-\mu)}\,.
 \end{cases}
 $$
The function $\mu_{n}(\cdot)$ is the one defined in Lemma \ref{le2}. 
\end{lemma}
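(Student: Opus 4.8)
The plan is to establish \rif{16} by a \emph{nonlinear atomic decomposition}, reducing everything to a finite–difference estimate and then invoking Lemma \ref{l4} together with the embedding \rif{immersione}. I would carry out all computations for the rescaled minimiser $u_{\rr}$ on $\mathcal B_1$: the factors $\rr^{\alpha},\rr^{\alpha_0}$ in \rif{16} will then arise from the Hölder seminorms of $\mathcal a_{\rr}(\cdot)=a(x_0+\rr\,\cdot)$ (cf.\ \rif{assr}$_3$), while the weight $\rr^{2\beta-n}$ in front of the Gagliardo seminorm and the averages $\mint$ will come from undoing the dilation $x\mapsto x_0+\rr x$ at the very end. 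Writing $w:=(E_{\rr}(\cdot,\snr{Du_{\rr}})-\kk)_{+}$ and using that $t\mapsto(t-\kk)_{+}$ is $1$-Lipschitz (so $\snr{\tau_h w}\le\snr{\tau_h E_{\rr}(\cdot,\snr{Du_{\rr}})}$), the goal is a bound $\nr{\tau_h w}_{L^2(\mathcal B_{1/2})}\le S\snr{h}^{\alpha_*}$ valid for $0<\snr{h}\le 2^{-8/\beta_0}$, with $S^2$ equal to a constant times the $\mathcal B_1$–version of the right–hand side of \rif{16}; Lemma \ref{l4} (with $s=2$, exponent $\alpha_*$, fixed $K$) and then \rif{immersione} ($2\beta<n$, $\chi(\beta)=n/(n-2\beta)$) upgrade this to control of $\nr{w}_{L^{2\chi}(\mathcal B_{1/2})}$ and $[w]_{\beta,2;\mathcal B_{1/2}}$, after which reabsorbing $\nr{w}_{L^2}$ into the $\ssss_1$–term and reversing the dilation give \rif{16}.

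For the finite–difference estimate I would fix $h$ as in \rif{hhh}, cover $\mathcal B_{1/2}$ by balls $B_h=B_{\snr{h}^{\beta_0}}(x_{\rm c})$ with $x_{\rm c}\in\mathcal B_{1/2+2\snr{h}^{\beta_0}}$ and bounded overlap (also of $\{8B_h\}$, with $8B_h\Subset\mathcal B_1$), and on each $8B_h$ bring in the frozen minimiser $v=v(8B_h)$ from \rif{atomi}. On $2B_h$ one splits
\[
\tau_h E_{\rr}(\cdot,\snr{Du_{\rr}})=\tau_h[E_{\rr}(\cdot,\snr{Du_{\rr}})-E_{\rr,\textnormal{i}}(\snr{Du_{\rr}})]+\tau_h[E_{\rr,\textnormal{i}}(\snr{Du_{\rr}})-E_{\rr,\textnormal{i}}(\snr{Dv})]+\tau_h E_{\rr,\textnormal{i}}(\snr{Dv}).
\]
The first summand is handled pointwise by \rif{0.00bis}: splitting $[\ell_{\omega}(\snr{Du_{\rr}})]^{q}\le[\ell_{\omega}(\snr{Du_{\rr}})]^{q-1+\delta_2}[\ell_{\omega}(\snr{Du_{\rr}})]^{1-\delta_2}$ and using $\ell_{\omega}(\snr{Du_{\rr}})^{2-\mu}\le c\M$ from \rif{4.1}, it is $\le c\snr{h}^{\beta_0\alpha}\rr^{\alpha}\M^{(1-\delta_2)/(2-\mu)}(\snr{Du_{\rr}}+1)^{q-1+\delta_2}$, which fits the $\ssss_2$–term of \rif{16} with extra decay in $\snr{h}$. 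For the \emph{frozen term}, I would use $\nr{Dv}_{L^\infty(6B_h)}\le c\mm^{1+\delta_1}$ from \rif{stim3} and the Caccioppoli inequality \rif{caccada}; since $\beta_0<1$ forces $\snr{h}\le\snr{h}^{\beta_0}$, the elementary bound for finite differences of Sobolev maps gives
\[
\nr{\tau_h(E_{\rr,\textnormal{i}}(\snr{Dv})-\kk)_+}_{L^2(B_h)}\le\snr{h}\,\nr{D(E_{\rr,\textnormal{i}}(\snr{Dv})-\kk)_+}_{L^2(2B_h)}\le c\snr{h}^{1-\beta_0}\mm^{\delta_1/2}\nr{(E_{\rr,\textnormal{i}}(\snr{Dv})-\kk)_+}_{L^2(4B_h)},
\]
and then $(E_{\rr,\textnormal{i}}(\snr{Dv})-\kk)_+\le(E_{\rr,\textnormal{i}}(\snr{Du_{\rr}})-\kk)_++\snr{E_{\rr,\textnormal{i}}(\snr{Dv})-E_{\rr,\textnormal{i}}(\snr{Du_{\rr}})}$, the first addendum being $\le w$ plus the lower–order term above (by \rif{0.00bis}) and the second, via \rif{0.000}, \rif{Vm} (exponents $2-\mu,q$) and $\snr{Du_{\rr}},\snr{Dv}\le c\M^{1/(2-\mu)}$, being $\le c\M^{1/2}\mathcal V_{\rr,\textnormal{i}}(Du_{\rr},Dv;8B_h)$; this extra power of $\M$ is precisely the price of replacing a difference of the $E$'s by the monotone quantity $\mathcal V^2$ of \rif{deffiV}. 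The comparison integral $\int_{8B_h}\mathcal V_{\rr,\textnormal{i}}^2(Du_{\rr},Dv;8B_h)\dx$ is then bounded through Lemma \ref{comparazione}, i.e.\ \rif{10}, producing $\snr{h}^{\beta_0\tia}\M^{(1-\delta_2/2)/(2-\mu)}$ times the $\ssss_3$–type integrals of $\snr{Du_{\rr}}$.

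For the \emph{comparison term} itself, \rif{0.000}, \rif{Vm} and the same $L^\infty$–bounds give $\nr{\tau_h[E_{\rr,\textnormal{i}}(\snr{Du_{\rr}})-E_{\rr,\textnormal{i}}(\snr{Dv})]}_{L^2(2B_h)}^2\le c\M\int_{8B_h}\mathcal V_{\rr,\textnormal{i}}^2\dx$, hence by \rif{10} a factor $\snr{h}^{\beta_0\tia}$; so the comparison term carries $\snr{h}^{\beta_0\tia/2}$ and the frozen term carries $\snr{h}^{1-\beta_0}$ in $L^2$–norm, and these balance exactly for $\beta_0:=2/(\tia+2)\in(0,1)$, for which $1-\beta_0=\beta_0\tia/2=\tia/(\tia+2)=\alpha_*$, precisely the threshold in \rif{ilbeta2}. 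Summing the squared local estimates over the covering (bounded overlap of $\{8B_h\}$, $8B_h\Subset\mathcal B_1$) yields $\nr{\tau_h w}_{L^2(\mathcal B_{1/2})}\le S\snr{h}^{\alpha_*}$, with the exponents $\ssss_1,\ssss_2,\ssss_3$ of \rif{sonot} produced by combining $\M^{1/(2-\mu)}$ from \rif{4.1}, $\M^{(1-\delta_2/2)/(2-\mu)}$ from \rif{10}, the $\mm^{\delta_1}$ and $\mm^{1+\delta_1}$ of Lemma \ref{le2} (converted to powers of $\M^{1/(2-\mu)}$ via \rif{4.1}$_2$), and the single powers of $\M$ spent in the passages from $E$–differences to $\mathcal V^2$. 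Finally, Lemma \ref{l4}, \rif{immersione} and the reversal of the dilation give \rif{16}; the constraint $1\le\mu<\mu_n(\delta_1/2)$ in \rif{sce5} is the one demanded by Lemma \ref{le2} (hence Proposition \ref{fzfz}).

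The routine parts are the finite–difference/Sobolev interplay and the applications of Lemmas \ref{l4}, \ref{comparazione}, \ref{le2}. The genuine obstacle is the \emph{bookkeeping of the $\M$–exponents}: one must keep the $\M$–dependence explicit and affine in the exponent through every step — each replacement of a difference of the $E$'s by $\mathcal V^2$ costs one power of $\M$, each use of \rif{4.1} or \rif{stim3} replaces $\mm$ by a power of $\M^{1/(2-\mu)}$ — so that the outcome lands \emph{exactly} on $\ssss_1,\ssss_2,\ssss_3$ of \rif{sonot}; one must also keep the mismatch between the true quantity $E_{\rr}(x,\cdot)$ and its $8B_h$–frozen surrogate $E_{\rr,\textnormal{i}}$, handled via \rif{0.00bis}, of strictly lower order in $\snr{h}$; and the value $\beta_0=2/(\tia+2)$ is forced by the requirement that the loss $\snr{h}^{\beta_0\tia}$ coming from \rif{10} and the gain $\snr{h}^{2(1-\beta_0)}$ coming from the Lipschitz bound \rif{stim3} on $v$ balance precisely at the exponent $\alpha_*$ of \rif{ilbeta2}.
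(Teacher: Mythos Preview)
Your proposal is correct and follows essentially the same route as the paper: the nonlinear ``atomic'' covering by balls $B_h$ of radius $\snr{h}^{\beta_0}$, the frozen minimiser $v(8B_h)$, the three-way split coefficient/comparison/frozen, the Caccioppoli inequality \rif{caccada} for the frozen piece, Lemma \ref{comparazione} for the comparison piece, the choice $\beta_0=2/(\tia+2)$ to balance the powers of $\snr{h}$, and finally Lemma \ref{l4} plus \rif{immersione} after summing over the covering. One small imprecision worth flagging in your exponent bookkeeping: when you write ``$\snr{Du_{\rr}},\snr{Dv}\le c\M^{1/(2-\mu)}$'' on $4B_h$ and deduce the factor $\M^{1/2}$ in front of $\mathcal V_{\rr,\textnormal{i}}$, you are implicitly using $\snr{Dv}\le c\mm$, but \rif{stim3} only gives $\snr{Dv}\le c\mm^{1+\delta_1}$ on $6B_h$; carrying this through (as the paper does in the chain leading to \rif{12}) produces the additional $\mm^{q\delta_1}$, respectively $\mm^{(2-\mu)\delta_1}$, which together with the further $\mm^{\delta_1}$ coming from \rif{caccia2} is exactly what makes the $(q+1)\delta_1$ appear in $\ssss_3$.
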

\begin{proof} This will be obtained by a technique that works as an analogue of dyadic decomposition in Besov spaces, but using the functions $v\equiv v(8B_h)$ in \rif{atomi} as ``atoms''; see Remark \ref{atomicore} below. We therefore divide the proof in two steps. 

{\em Step 1: Estimates on a single ball $B_h$}. Here we again use the notation and the results in Sections \ref{minisec}-\ref{hy}. In particular, here we again argue on a fixed ball $B _h$. Our goal here is to prove estimate \rif{patch} below. We recall that basic properties of difference quotients yield
$$
\int_{B_h}\snr{\tau_{h}(E_{\rr,\textnormal{i}}(\snr{Dv})-\kk)_{+}}^{2}\dx \le 
|h|^{2}\int_{2B_h}\snr{D(E_{\rr,\textnormal{i}}(\snr{Dv})-\kk)_{+}}^{2}\dx
$$
where $\kk\ge 0$ is any number (recall that $|h|\leq |h|^{\beta_0}$). Using this in connection with \rif{caccada} we have  
\eqn{caccia2}
$$
\int_{B_h}\snr{\tau_{h}(E_{\rr,\textnormal{i}}(\snr{Dv})-\kk)_{+}}^{2}\dx 
  \leq c\snr{h}^{2(1-\beta_{0})}\mm^{\delta_{1}}\int_{4B_h}(E_{\rr,\textnormal{i}}(\snr{Dv})-\kk)_{+}^{2} \dx 
$$
with $c\equiv c(\data,\delta_{1})$ and moreover \rif{stim3} holds by \rif{sce5}$_1$; recall  that $\mm \equiv \mm(8B_h)$ is defined in \rif{mmm0}. Use of \rif{caccada} is legitimate here as 
\rif{sce5}$_1$ is assumed. Let us recall that here it is $E_{\rr,\textnormal{i}}(\snr{z})\equiv E_{\rr,\textnormal{i}}(\snr{z};8B_h)$. 
Let us now record a couple of auxiliary estimates. The first is obtained as follows:
\begin{flalign}\label{11}
 &\notag  \int_{4B_h}\snr{E_{\rr}(x,\snr{Du_{\rr}})-E_{\rr,\textnormal{i}}(\snr{Du_{\rr}})}^{2} \dx\\ &\notag \qquad \stackrel{\eqref{0.00bis}}{\le}c\snr{h}^{2\beta_{0}\alpha}\rr^{2\alpha}\int_{4B_h}[\ell_{\omega}(Du_{\rr})]^{2q} \dx\nonumber\\
&\qquad \stackleq{mmm0}c\snr{h}^{2\beta_{0}\alpha}\mm^{2(1-\delta_{2})}\rr^{2\alpha}\int_{4B_h}(|Du_{\rr}|+1)^{2(q-1+\delta_{2})} \dx\nonumber \\
&\qquad \stackrel{\eqref{4.1}_2}{\le} c\snr{h}^{\beta_{0}\tilde \alpha}\M^{\frac{2(1-\delta_{2})}{2-\mu}}\rr^{2\alpha}\int_{4B_h}(|Du_{\rr}|+1)^{2(q-1+\delta_{2})} \dx
\end{flalign}
for $c\equiv c(n,q)$. For the second auxiliary inequality, we estimate
\begin{flalign*}
&\int_{4B_h}\snr{E_{\rr,\textnormal{i}}(\snr{Du_{\rr}})-E_{\rr,\textnormal{i}}(\snr{Dv})}^{2} \dx\\
& \quad \stackleq{0.000} c\int_{4B_h}(\snr{Du_{\rr}}^2+\snr{Dv}^2+1)^{1-\mu}\snr{Du_{\rr}-Dv}^{2} \dx\nonumber \\
&\quad \qquad +c[\tilde a_{\rr, \textnormal{i}}(8B_h)]^{2}\int_{4B_h}(\snr{Du_{\rr}}^2+\snr{Dv}^2+\omega^2)^{q-1}\snr{Du_{\rr}-Dv}^{2} \dx\nonumber \\
&\quad  \stackleq{stim3} c\mm^{(2-\mu)(1+\delta_{1})}\int_{4B_h}
(\snr{Du_{\rr}}^2+\snr{Dv}^2+1)^{-\mu/2}\snr{Du_{\rr}-Dv}^2 \dx\nonumber \\
&\quad \qquad+c[\tilde a_{\rr, \textnormal{i}}(8B_h)]^2\mm^{q(1+\delta_{1})}\\
& \hspace{17mm}\cdot \int_{4B_h}(\snr{Du_{\rr}}^2+\snr{Dv}^2+\omega^2)^{(q-2)/2}\snr{Du_{\rr}-Dv}^{2} \dx\nonumber \\
&\quad  \stackleq{Vm} c\mm^{(2-\mu)(1+\delta_{1})}\int_{4B_h}\snr{V_{1,2-\mu}(Du_{\rr})-V_{1,2-\mu}(Dv)}^{2} \dx\nonumber \\
&\qquad \quad   +c\left( \tilde a_{\rr, \textnormal{i}}(8B_h)[\ell_{\omega}(\mathfrak{m})]^{q}\right)\mm^{q\delta_{1}}\\
& \hspace{17mm}\cdot\int_{4B_h}\tilde a_{\rr, \textnormal{i}}(8B_h)\snr{V_{\omega,q}(Du_{\rr})-V_{\omega,q}(Dv)}^{2} \dx\nonumber \\
&\quad  \stackrel{\eqref{deffiV}}{\leq} c\left(\mm^{(2-\mu)(1+\delta_{1})}+\M\mm^{q\delta_{1}}\right)\int_{4B_h}\mathcal{V}_{\rr, \textnormal{i}}^{2}(Du_{\rr},Dv;8B_h) \dx\,,
\end{flalign*}
where $c\equiv c (\data, \delta_{1})$ and in the last line we have also used \rif{4.1}$_1$. 
Using \eqref{4.1}$_2$, we gain
$$
\int_{4B_h}\snr{E_{\rr,\textnormal{i}}(\snr{Du_{\rr}})-E_{\rr,\textnormal{i}}(\snr{Dv})}^{2} \dx \leq c \M^{\frac{2-\mu+q\delta_{1}}{2-\mu}}\int_{4B_h}\mathcal{V}_{\rr, \textnormal{i}}^{2}(Du_{\rr},Dv;8B_h) \dx\,.
$$
Using this last estimate with \rif{10} we conclude with 
\begin{flalign}\label{12}
\notag & \int_{4B_h}\snr{E_{\rr,\textnormal{i}}(\snr{Du_{\rr}})-E_{\rr,\textnormal{i}}(\snr{Dv})}^{2} \dx\\ & \qquad 
\le c\snr{h}^{\beta_{0}\tia}\M^{\frac{3-\mu+q\delta_{1}-\delta_{2}/2}{2-\mu}}\rr^{\alpha}\int_{8B_h}(|Du_{\rr}|+1)^{q-1+\delta_{2}} \dx\notag \\
& \qquad \quad +c\snr{h}^{\beta_{0}\tia}\M^{\frac{3-\mu+q\delta_{1}-\delta_{2}/2}{2-\mu}}\rr^{\alpha_0} \int_{8B_h}(|Du_{\rr}|+1)^{3\delta_{2}} \dx
\end{flalign}
where $c\equiv c(\data, \delta_{1}, \delta_{2})$, that is the second auxiliary estimate we were aiming at. Triangle inequality now yields
\begin{flalign*}
&\int_{B_h}\snr{\tau_{h}(E_{\rr}(x,\snr{Du_{\rr}})-\kk)_{+}}^{2} \dx \le c\int_{B_h}\snr{\tau_{h}(E_{\rr,\textnormal{i}}(\snr{Dv})-\kk)_{+}}^{2} \dx\nonumber \\
&\ \   +c\int_{B_h}\snr{(E_{\rr}(x+h,\snr{Du_{\rr}(x+h)})-\kappa)_+-(E_{\rr,\textnormal{i}}(\snr{Du_{\rr}(x+h)})-\kappa)_+}^{2} \dx\\
&\ \   +c\int_{B_h}\snr{(E_{\rr,\textnormal{i}}(\snr{Du_{\rr}(x+h)})-\kappa)_+-(E_{\rr,\textnormal{i}}(\snr{Dv(x+h)})-\kappa)_+}^{2} \dx\\
&\ \    +c\int_{B_h}\snr{(E_{\rr,\textnormal{i}}(\snr{Dv})-\kappa)_+-(E_{\rr,\textnormal{i}}(\snr{Du_{\rr}})-\kappa)_+}^{2} \dx
\\
&\ \    +c\int_{B_h}\snr{(E_{\rr,\textnormal{i}}(\snr{Du_{\rr}})-\kappa)_+-(E_{\rr}(x,\snr{Du_{\rr}})-\kk)_{+}}^{2} \dx\,.
\end{flalign*} 
Using the standard property of translations
$$
\int_{B_h}\snr{g(x+h)}^{2} \dx\leq  \int_{4B_h}\snr{g}^{2} \dx 
$$ valid for every $g \in L^2(4B_h)$, (for this note that $h + B_h \subset 4B_h$ as $|h|\leq 1$), and also the Lipschitz continuity of truncations, that is
$
\snr{(s-\kk)_{+}-(t-\kk)_{+}}\leq \snr{s-t}$ for every $ s, t\in \er$, 
we continue to estimate as follows 
\begin{flalign*}
& \int_{B_h}\snr{\tau_{h}(E_{\rr}(x,\snr{Du_{\rr}})-\kk)_{+}}^{2} \dx \\ &\quad  \  \ \le c\int_{B_h}\snr{\tau_{h}(E_{\rr,\textnormal{i}}(\snr{Dv})-\kk)_{+}}^{2} \dx\\  &\quad \qquad +c\int_{4B_h}\snr{E_{\rr}(x,\snr{Du_{\rr}})-E_{\rr,\textnormal{i}}(\snr{Du_{\rr}})}^{2} \dx\\ & \qquad \quad +c\int_{4B_h}\snr{E_{\rr,\textnormal{i}}(\snr{Dv})-E_{\rr,\textnormal{i}}(\snr{Du_{\rr}})}^{2} \dx\nonumber \\
&\quad   \stackrel{\rif{caccia2}}{\leq} c\snr{h}^{2(1-\beta_{0})}\mm^{\delta_{1}}\int_{4B_h}(E_{\rr}(x,\snr{Du_{\rr}})-\kk)_{+}^{2} \dx\nonumber \\
&\quad      \qquad  +c \int_{4B_h}\snr{E_{\rr}(x,\snr{Du_{\rr}})-E_{\rr,\textnormal{i}}(\snr{Du_{\rr}})}^{2} \dx \\
&\quad    \qquad  +c\mm^{\delta_{1}}\int_{4B_h}\snr{E_{\rr,\textnormal{i}}(\snr{Du_{\rr}})-E_{\rr,\textnormal{i}}(\snr{Dv})}^{2} \dx
\end{flalign*}
where $c\equiv c(\data,\delta_{1})$. Note that we have used the elementary estimate $$(E_{\rr,\textnormal{i}}(\snr{Du_{\rr}})-\kk)_{+}\leq (E_{\rr}(x,\snr{Du_{\rr}})-\kk)_{+} \quad \mbox{on $8B_h$}\,,$$ that follows from the very definition of $E_{\rr,\textnormal{i}}(\cdot)$ in \rif{eiin}$_2$. By then using \rif{11} and \rif{12} to estimate the last two integrals in the above display, respectively, and again \rif{4.1}$_2$, we come to
\begin{flalign}
& \int_{B_h}\snr{\tau_{h}(E_{\rr}(\snr{Du_{\rr}})-\kk)_{+}}^{2} \dx \notag\\
&\quad \le \tilde c\snr{h}^{2\alpha_{*}}\M^{2\ssss_{1}}\int_{4B_h}(E_{\rr}(x,\snr{Du_{\rr}})-\kk)_{+}^{2} \dx\nonumber \\
&\quad \qquad + \tilde c\snr{h}^{2\alpha_{*}}\M^{2\ssss_{2}}\rr^{2\alpha}\int_{8B_h}(|Du_{\rr}|+1)^{2(q-1+\delta_{2})} \dx\nonumber \\
&\quad \qquad + \tilde c\snr{h}^{2\alpha_{*}}\M^{2\ssss_{3}}\rr^{\alpha}\int_{8B_h}(|Du_{\rr}|+1)^{q-1+\delta_{2}}\notag  \\
& \quad \qquad +\tilde c\snr{h}^{2\alpha_{*}}\M^{2\ssss_{3}}\rr^{\alpha_0} \int_{8B_h}(|Du_{\rr}|+1)^{3\delta_{2}} \dx
\label{patch}
\end{flalign}
with $\tilde c\equiv \tilde c(\data, \delta_{1}, \delta_{2})$, where $\mathfrak s_1$, $\mathfrak s_2$ and $\mathfrak s_3$ are as in \rif{sonot} 
 and we have taken $\beta_0$ such that
 $$\beta_{0}:=\frac 2{\tia+2} \Longleftrightarrow \beta_{0}\tia=2(1-\beta_{0})=2 \alpha_{*}\,.$$

{\em Step 2: Patching estimates \trif{patch} on different balls}. 
In this second and final step we are now going to recover estimates for $\tau_{h}(E_{\rr}(\cdot, \snr{Du_{\rr}})-\kk)_{+}$ on $\mathcal B_{1/2}$ by patching up estimates \rif{patch} via a dyadic covering argument. This goes as follows: we take a lattice of cubes $\{Q_{\gamma}\}_{\gamma\le \mathfrak{n}}$ with sidelength equal to $2\snr{h}^{\beta_{0}}/\sqrt{n}$, centered at points $\{x_{\gamma}\}_{\gamma\le \mathfrak{n}}\subset \mathcal \mathcal B_{1/2+ 2|h|^{\beta_0}}$, with sides parallel to the coordinate axes, and such that
\begin{flalign}\label{11.1}
\left| \ \mathcal B_{1/2}\setminus \bigcup_{\gamma\le \mathfrak{n}}Q_{\gamma} \ \right|=0,\qquad Q_{\gamma_{1}}\cap Q_{\gamma_{2}}=\emptyset \ \Leftrightarrow \ \gamma_{1}\not =\gamma_{2}.
\end{flalign}
This family of cubes corresponds to a family of balls in the sense that $Q_{\gamma}\equiv Q_{\textnormal{inn}}(B_{\gamma})$ and $B_{\gamma}:= B_{\snr{h}^{\beta_{0}}}(x_{\gamma})$, as defined above. By construction, and in particular by \eqref{hhh}, it is 
$
8B_{\gamma}\Subset \mathcal B_{1}$ for all $\gamma\le \mathfrak{n}$ and $ \mathfrak{n}\approx \snr{h}^{-n\beta_{0}}
$, 
where the implied constant depends on $n$. Moreover, by \eqref{11.1}, each of the dilated balls $8B_{\gamma_{t}}$ intersects the similar ones $8B_{\gamma_{s}}$ fewer than $\mathfrak{c}_n$ times, that is a number depending only on $n$ (uniform finite intersection property). This implies that 
\eqn{sommamis}
$$
\sum_{\gamma=1}^{\mathfrak{n}}\lambda(8B_{\gamma}) \leq \mathfrak{c}_n
\lambda(\mathcal B_1)
$$
holds for every Borel measure $\lambda(\cdot)$ defined on $\mathcal B_1$. 
We then write estimates \rif{patch} on balls $B_h\equiv B_{\gamma}$ and sum up in order to obtain
\begin{flalign}
\notag &  \int_{\mathcal B_{1/2}}\snr{\tau_{h}(E_{\rr}(x,\snr{Du_{\rr}})-\kk)_{+}}^{2} \dx  \\ &\quad\stackrel{\eqref{11.1}}{\leq} 
\sum_{\gamma=1}^{\mathfrak{n}}\int_{B_{\gamma}}\snr{\tau_{h}(E_{\rr}(x,\snr{Du_{\rr}})-\kk)_{+}}^{2} \dx\notag \\
\notag& \quad \stackleq{patch} \tilde c\snr{h}^{2\alpha_{*}}\M^{2\ssss_{1}}\sum_{\gamma=1}^{\mathfrak{n}}\int_{8B_{\gamma}}(E_{\rr}(x,\snr{Du_{\rr}})-\kk)_{+}^{2} \dx\notag \\ & \notag \qquad   \qquad +\tilde c\snr{h}^{2\alpha_{*}}\M^{2\ssss_{2}}\rr^{2\alpha}\sum_{\gamma=1}^{\mathfrak{n}}\int_{8B_{\gamma}}(|Du_{\rr}|+1)^{2(q-1+\delta_{2})} \dx\nonumber\\
\notag&\qquad \qquad  +\tilde c\snr{h}^{2\alpha_{*}}\M^{2\ssss_{3}}\rr^{\alpha}\sum_{\gamma=1}^{\mathfrak{n}}\int_{8B_{\gamma}}(|Du_{\rr}|+1)^{q-1+\delta_{2}} \dx\\ \notag &\qquad \qquad+\tilde c\snr{h}^{2\alpha_{*}}\M^{2\ssss_{3}}\rr^{\alpha_0}\sum_{\gamma=1}^{\mathfrak{n}}\int_{8B_{\gamma}}(|Du_{\rr}|+1)^{3\delta_{2}}  \dx\nonumber\\
\notag& \quad\stackleq{sommamis}\mathfrak{c}_n\tilde c\snr{h}^{2\alpha_{*}}\M^{2\ssss_{1}}\int_{\mathcal B_{1}}(E_{\rr}(x,\snr{Du_{\rr}})-\kk)_{+}^{2} \dx\\ \notag & \qquad \qquad  +\mathfrak{c}_n\tilde c\snr{h}^{2\alpha_{*}}\M^{2\ssss_{2}}\rr^{2\alpha}\int_{\mathcal B_{1}}(|Du_{\rr}|+1)^{2(q-1+\delta_{2})} \dx\nonumber \\
&\notag  \qquad  \qquad +\mathfrak{c}_n\tilde c\snr{h}^{2\alpha_{*}}\M^{2\ssss_{3}}\rr^{\alpha}\int_{\mathcal B_{1}}(|Du_{\rr}|+1)^{q-1+\delta_{2}} \dx\\ & \qquad   \qquad +\mathfrak{c}_n\tilde c\snr{h}^{2\alpha_{*}}\M^{2\ssss_{3}}\rr^{\alpha_0}\int_{\mathcal B_{1}}(|Du_{\rr}|+1)^{3\delta_{2}} \dx\,. \label{frommi}
\end{flalign}
Thanks to the inequality in the last display we apply  Lemma \ref{l4}. This yields $(E_{\rr}(\cdot,\snr{Du_{\rr}})-\kk)_{+}\in W^{\beta,2}(\mathcal B_{1/2})$  for all $\beta\in (0,\alpha_{*})$ with the related a priori bound
\begin{flalign}
\notag & \nr{(E_{\rr}(\cdot,\snr{Du_{\rr}})-\kk)_{+}}_{L^{2\chi}(\mathcal B_{1/2})}+ [(E_{\rr}(\cdot,\snr{Du_{\rr}})-\kk)_{+}]_{\beta,2;\mathcal B_{1/2}} \\
&\qquad \le c\M^{\ssss_{1}}\nr{E_{\rr}(\cdot,\snr{Du_{\rr}})-\kappa)_{+}}_{L^{2}(\mathcal B_{1})} \notag   \\
 & \qquad \quad+c\M^{\ssss_{2}}\rr^{\alpha}\nr{|Du_{\rr}|+1}_{L^{2(q-1+\delta_{2})}(\mathcal B_1)}^{q-1+\delta_{2}}\nonumber\\
& \qquad \quad +c\M^{\ssss_{3}}\rr^{\alpha/2}\nr{|Du_{\rr}|+1}_{L^{q-1+\delta_{2}}(\mathcal B_1)}^{(q-1+\delta_{2})/2}\notag \\ & \qquad \quad +c\M^{\ssss_{3}}\rr^{\alpha_0/2}\nr{|Du_{\rr}|+1}_{L^{3\delta_{2}}(\mathcal B_1)}^{3\delta_{2}/2}
\label{riscalon}
\end{flalign}
that holds for every $\chi\equiv \chi(\beta)$ as in \rif{ilbeta2}, where $c\equiv c(\data, \delta_{1}, \delta_{2}, \beta)$. Notice that here we have used, in order, first \rif{cru2}, as a consequence of \rif{frommi}, and then 
\eqref{immersione}. Scaling back from $\mathcal B_1$ to $B_{\rr}$ in \rif{riscalon} via \rif{notational0}, \rif{nuoviaut0} and \rif{eiin}, squaring the resulting inequality, and restoring the original notation, we arrive at \rif{16} and the proof is complete.
\end{proof}
\vspace{1mm}
\begin{remark}[Nonlinear atoms]\label{atomicore} Here we briefly expand on the possible analogy between the classic atomic decompositions in fractional spaces (see for instance \cite[Section 4.6]{AH} and \cite[Chapter 2]{triebel}) and the construction made in the proof of Lemma \ref{fracacc}. Atomic decompositions of a Besov function $w$ usually go via decompositions of the space $\er^n$ in dyadic grids with mesh $2^{-k}$, $k\in \en$, corresponding to the annuli (in the frequency space) considered in Littllewood-Paley theory. On each cube $Q_{k,\gamma}$ of the grid one considers an atom $a_{k,\gamma}(\cdot)$, i.e., a smooth function with certain control on its derivatives, up to the maximal degree of regularity one is interested in describing for $w$. Specifically, one requires that
\eqn{atom1}
$$
\textnormal{supp}\, a_{k,\gamma} \subset Q_{k,\gamma}\,, \qquad |D_{S} a_{k,\gamma}|\lesssim 2^{-|S|k}
$$
hold for sufficiently large multi-indices $S$. Summing up (over $\gamma$) such atoms multiplied  by suitable modulating coefficients, and then yet over all possible grids $k\in \en$, allows to give a precise description of the smoothness of the function $w$. Such ``linear'' decompositions, although very efficient, are of little use when dealing with nonlinear problems as those considered in this paper. The idea in Lemma \ref{fracacc} is then, given a grid of size $|h|^{\beta_0} \approx 1/2^{k\beta_0}$, and therefore a certain ``height'' in the frequency space, to consider atoms $v$ that are in a sense close to the original solution $u$ in that they are themselves solutions to nonlinear problems (with frozen coefficients). In other words we attempt a decomposition of the type 
\eqn{atom2}
$$
u_{\rr}(x) \approx \sum_{\gamma\le \mathfrak{n}} v_\gamma(x) \mathds{1}_{B_\gamma}(x) + \texttt{o}(|h|^{\tilde \alpha})
 $$
 where $v_{\gamma}$ is defined as in \rif{atomi}, with $B_{\gamma}\equiv B_{h}$ as in Step 2 from Lemma \ref{fracacc} ($ \mathds{1}_{B_\gamma}$ is the indicator function of the ball $B_\gamma$). Notice in fact the analogy with the second information in 
\rif{atom1}, describing the maximal smoothness of a classical atom, with the Caccioppoli inequality \rif{caccada}, from which one infers its fractional version for $u_{\rr}$, that is \rif{16}.

\end{remark}

 \subsection{$C^{0,1}$-bounds via nonlinear potentials} Here we deliver
\begin{proposition}\label{priora}
Let $u_{\omega,\varepsilon}\in W^{1,q}(B_{r})$ be as in \eqref{pd}. The inequality
\eqn{18}
$$
\nr{\ti{E}_{\omega,\sigma_{\eps}}(\cdot,\snr{Du_{\omega,\varepsilon}})}_{L^{\infty}(B_{t})}\le\frac{c}{(s-t)^{n\vartheta}}\nr{H_{\omega,\sigma_{\eps}}(\cdot,Du_{\omega,\varepsilon})+1}_{L^{1}(B_{s})}^{\vartheta}+c
$$
holds whenever $B_{t}\Subset B_s\subseteq B_{r}$ are concentric balls such that $r\leq 1$, where $c\equiv c(\data )$ and $\vartheta\equiv \vartheta(n,q,\alpha, \alpha_0)$. The function $\ti{E}_{\omega,\sigma_{\eps}}(\cdot)$ has been introduced in \trif{eiit}. 
\end{proposition}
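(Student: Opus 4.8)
The plan is to iterate the fractional Caccioppoli inequality \rif{16} through the potential-theoretic De Giorgi Lemma \ref{revlem}, and then reabsorb the resulting powers of $\nr{\ti E_{\omega,\sigma_{\eps}}(\cdot,\snr{Du_{\omega,\varepsilon}})}_{L^{\infty}}$ by Young's inequality and Lemma \ref{l5}. Throughout I would write $w:=E_{\omega,\sigma_{\eps}}(\cdot,\snr{Du_{\omega,\varepsilon}})$ and, for $\varrho\le r$, $A(\varrho):=\nr{\ti E_{\omega,\sigma_{\eps}}(\cdot,\snr{Du_{\omega,\varepsilon}})}_{L^{\infty}(B_{\varrho})}$; by \rif{3} these quantities are finite, $w$ is continuous, and every point is a Lebesgue point of $w$. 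Two elementary facts are used repeatedly: $0\le w\le \ti E_{\omega,\sigma_{\eps}}(\cdot,\snr{Du_{\omega,\varepsilon}})\le \nr{w}_{L^{\infty}}+c$, and, by \rif{assif}$_{1}$ (together with $g(\cdot)\ge 1$) and \rif{0.002}, the pointwise bound $\snr{Du_{\omega,\varepsilon}}+\ti E_{\omega,\sigma_{\eps}}(\cdot,\snr{Du_{\omega,\varepsilon}})\le c(H_{\omega,\sigma_{\eps}}(\cdot,Du_{\omega,\varepsilon})+1)$, with $c$ independent of $\omega,\sigma_{\eps}$.

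First I would fix the small parameters, and here the sharp bound \rif{bound}$_{1}$, i.e. \rif{doppiob2}, enters: pick $\delta_{2}\in(0,1/2)$ so small that both $q-1+\delta_{2}<\alpha/n$ and $3\delta_{2}<\alpha_0/n$ hold; pick $\beta\in(0,\alpha_{*})$, which fixes $\chi:=\chi(\beta)=n/(n-2\beta)>1$; finally pick $\delta_{1}\in(0,1)$ small, to be quantified at the very end. Given concentric balls $B_{t}\Subset B_{\tau_1}\Subset B_{\tau_2}\Subset B_{s}\subseteq B_{r}$, set $\M:=A(\tau_2)+1$ and $r_0:=(\tau_2-\tau_1)/4\le 1$. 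Provided $1\le\mu<\mu_{n}(\delta_{1}/2)$, Lemma \ref{fracacc} yields \rif{16} on every ball $B_{\varrho}(x_0)\subseteq B_{r_0}(x_0)$ with $x_0\in B_{\tau_1}$, because then $\nr{\ti E_{\omega,\sigma_{\eps}}(\cdot,\snr{Du_{\omega,\varepsilon}})}_{L^{\infty}(B_{\varrho}(x_0))}+1\le A(\tau_2)+1=\M$, which is \rif{sce5}$_{2}$. Inequality \rif{16} is exactly hypothesis \rif{revva} of Lemma \ref{revlem} with $\kk_0=0$, $M_0=c\M^{\ssss_1}$, $M_1=c\M^{\ssss_2}$, $M_2=M_3=c\M^{\ssss_3}$, $\theta_{j}=1$, $f_{j}=\snr{Du_{\omega,\varepsilon}}+1$, and $(m_1,\sigma_1)=(2(q-1+\delta_{2}),\alpha)$, $(m_2,\sigma_2)=(q-1+\delta_{2},\alpha/2)$, $(m_3,\sigma_3)=(3\delta_{2},\alpha_0/2)$, where $\ssss_1,\ssss_2,\ssss_3$ are as in \rif{sonot}.

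Applying Lemma \ref{revlem} at each $x_0\in B_{\tau_1}$, and estimating the averaged term by $w\le A(\tau_2)$ together with $w\le c(H_{\omega,\sigma_{\eps}}(\cdot,Du_{\omega,\varepsilon})+1)$, I would obtain $w(x_0)\le c\M^{p_0}r_0^{-n/2}\nr{H_{\omega,\sigma_{\eps}}(\cdot,Du_{\omega,\varepsilon})+1}_{L^{1}(B_s)}^{1/2}+c\sum_{j=1}^{3}\M^{p_j}\mathbf{P}^{m_j,1}_{2,\sigma_j}(\snr{Du_{\omega,\varepsilon}}+1;x_0,2r_0)$, where $p_0:=\frac{\ssss_1\chi}{\chi-1}+\frac12$, $p_1:=\frac{\ssss_1}{\chi-1}+\ssss_2$ and $p_2=p_3:=\frac{\ssss_1}{\chi-1}+\ssss_3$. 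The decisive point is that the choice of $\delta_{2}$ makes $nm_{j}\theta_{j}/(2\sigma_{j})<1$ for $j=1,2,3$, so Lemma \ref{crit} applies with exponent $\gamma=1$ and gives $\mathbf{P}^{m_j,1}_{2,\sigma_j}(\snr{Du_{\omega,\varepsilon}}+1;x_0,2r_0)\le c\nr{\snr{Du_{\omega,\varepsilon}}+1}_{L^{1}(B_{\tau_2})}^{m_j/2}\le c\nr{H_{\omega,\sigma_{\eps}}(\cdot,Du_{\omega,\varepsilon})+1}_{L^{1}(B_s)}^{m_j/2}$, \emph{with no residual dependence on $A$} --- this is exactly where \rif{doppiob2} is used. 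Taking the supremum over $x_0\in B_{\tau_1}$ and recalling $A(\tau_1)\le \nr{w}_{L^{\infty}(B_{\tau_1})}+c$ produces a bound for $A(\tau_1)$.

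The last step, and the main obstacle, is the reabsorption. Since $\ssss_1\to\delta_{1}/2$, $\ssss_2\to 1-\delta_{2}$ and $\ssss_3\to 1+\frac{(q+1)\delta_{1}}{2}-\frac{\delta_{2}}{4}$ as $\mu\to1$, one chooses $\delta_{1}$ small enough (relative to the already fixed $\delta_{2},\chi,q$) that $p_0,p_1,p_2,p_3<1$ at $\mu=1$; by continuity and monotonicity in $\mu$ there is then $\mum\equiv\mum(n,q,\alpha,\alpha_0)\in(1,\min\{\mu_{n}(\delta_{1}/2),3/2\})$ with $\max_{j}p_{j}(\mu)$ staying bounded away from $1$ for $\mu\in[1,\mum)$. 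For such $\mu$, Young's inequality turns each term $c\M^{p_j}\nr{H_{\omega,\sigma_{\eps}}(\cdot,Du_{\omega,\varepsilon})+1}_{L^{1}(B_s)}^{r_j}$, with $r_1=q-1+\delta_{2}$, $r_2=(q-1+\delta_{2})/2$, $r_3=3\delta_{2}/2$, into $\tfrac18 A(\tau_2)+c\nr{H_{\omega,\sigma_{\eps}}(\cdot,Du_{\omega,\varepsilon})+1}_{L^{1}(B_s)}^{r_j/(1-p_j)}+c$, and the averaged term into $\tfrac18 A(\tau_2)+c(\tau_2-\tau_1)^{-n/(2(1-p_0))}\nr{H_{\omega,\sigma_{\eps}}(\cdot,Du_{\omega,\varepsilon})+1}_{L^{1}(B_s)}^{1/(2(1-p_0))}+c$ (using $r_0=(\tau_2-\tau_1)/4\le 1$). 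Hence $A(\tau_1)\le\tfrac12 A(\tau_2)+(\tau_2-\tau_1)^{-\gamma}\mathcal{A}+\mathcal{B}$ for all $t\le\tau_1<\tau_2\le s$, with $\gamma,\mathcal{A},\mathcal{B}$ of the form required by Lemma \ref{l5}; that lemma applied to $\varrho\mapsto A(\varrho)$ on $[t,s]$ produces \rif{18} with $\vartheta$ the largest among $\frac{1}{2(1-p_0)}$ and $\frac{r_j}{1-p_j}$. Taking the supremum of these numbers over $\mu\in[1,\mum)$, and using $\nr{H_{\omega,\sigma_{\eps}}(\cdot,Du_{\omega,\varepsilon})+1}_{L^{1}(B_s)}^{a}\le\nr{H_{\omega,\sigma_{\eps}}(\cdot,Du_{\omega,\varepsilon})+1}_{L^{1}(B_s)}^{\vartheta}+1$ for $a\le\vartheta$ together with $(s-t)^{-n\vartheta}\ge1$ (since $s-t\le1$), makes $\vartheta$ depend only on $n,q,\alpha,\alpha_0$. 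The delicate part throughout is matching the exponents $\ssss_1,\ssss_2,\ssss_3$ of the renormalizing factor $\M$ in \rif{16} against the integrability gained from Lemma \ref{crit}: this simultaneously forces $\mu$ close to $1$ (thereby fixing $\mum$) and makes \rif{doppiob2} indispensable for keeping the potentials free of $A$.
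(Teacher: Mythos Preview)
Your argument is correct and follows essentially the same route as the paper: fix $\delta_{2}$ via \rif{doppiob2}, feed the fractional Caccioppoli inequality \rif{16} into Lemma \ref{revlem} with $\M\approx\nr{\ti E_{\omega,\sigma_{\eps}}(\cdot,\snr{Du_{\omega,\varepsilon}})}_{L^{\infty}(B_{\tau_2})}$, bound the resulting potentials by Lemma \ref{crit} with $\gamma=1$, choose $\delta_{1}$ (and then $\mum$ by continuity) so that the exponents $p_{0},p_{1},p_{2},p_{3}$ stay strictly below $1$, and close via Young's inequality and Lemma \ref{l5}. The only cosmetic differences from the paper are the precise values of $r_{0}$ and $\beta$ (the paper takes $r_{0}=(\tau_{2}-\tau_{1})/8$ and $\beta=\alpha_{*}/2$) and the normalization $\M=A(\tau_{2})+1$ versus $\M=2A(\tau_{2})$ after assuming $A(t)\ge 1$; none of this affects the structure or correctness of the argument.
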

\begin{proof} In the following we are going to use Lemma \ref{fracacc} with $\delta_1\in (0,1)$ whose size will determined in a few lines as a function of $n,q,\alpha, \alpha_0$, and with $\delta_2$ by now fixed by
\eqn{ildelta}
$$
 0< \delta_{2}:= \frac12\min\left\{1+\frac{\alpha}{n}-q, \frac{\alpha_0}{3n}\right\}< \frac 12\,.
$$
Note that $\delta_2>0$ follows as a consequence of the assumed bound \rif{doppiob2} and that the choice in \rif{ildelta} makes $\delta_2$ depending only on $n,q,\alpha, \alpha_0$. Without loss of generality we can assume that
\eqn{ammissibile}
$$
\nr{\ti{E}_{\omega,\sigma_{\eps}}(\cdot,\snr{Du_{\omega,\varepsilon}})}_{L^{\infty}(B_{t})} \geq 1
$$
otherwise \rif{18} is obvious.  
We consider concentric balls $B_{t}\Subset B_{\tau_{1}}\Subset B_{\tau_{2}}\Subset B_{s}$ and set $r_{0}:=(\tau_{2}-\tau_{1})/8$. It follows that $B_{2r_{0}}(x_{0})\Subset B_{\tau_{2}}$ holds whenever $x_0\in B_{\tau_1}$. Notice that by \rif{3} every point is a Lebesgue point for both $\snr{Du_{\omega,\varepsilon}}$ and $E_{\omega,\sigma_{\eps}}(\cdot,\snr{Du_{\omega,\varepsilon}})$. By Lemma \ref{fracacc}, and \eqref{16} used on $B_{\rr}(x_0)\subset B_{r_0}(x_0)$, we can apply Lemma \ref{revlem} on $B_{r_{0}}(x_{0})$ verifying \rif{revva} with
\vspace{3mm}
\eqn{sceltona}
$$
\begin{cases}
\ w\equiv E_{\omega,\sigma_{\eps}}(\cdot,\snr{Du_{\omega,\varepsilon}})\\
 \ \M:=2\nr{\ti{E}_{\omega,\sigma_{\eps}}(\cdot,\snr{Du_{\omega,\varepsilon}})}_{L^{\infty}(B_{\tau_{2}})}\\
 \quad \mbox{(such a choice of $\M$ is admissible in \rif{16} by \rif{ammissibile})}\\
\ M_{0}:=\M^{\ssss_{1}}, \ M_{1}\equiv\M^{\ssss_{2}}, \ M_{2}\equiv M_{3}:=\M^{\ssss_{3}}\\
\ \chi:=n/(n-2\beta)>1, \quad \beta:=\alpha_*/2 \quad \mbox{(recall \rif{ilbeta2})}\\
\  \sigma_{1}:=\alpha, \ \sigma_{2}:=\alpha/2, \ \sigma_{3}:=\alpha_0/2, \  \ f_{1}\equiv f_{2}\equiv f_{3}:=\snr{Du_{\omega,\varepsilon}}+1,\\
\ \theta_{1}\equiv \theta_{2}\equiv \theta_{3}:=1, \ c_*\equiv c_*(\data,\delta_1, \beta)\equiv c_*(\data,\delta_1),   \\
\ m_{1}:=2(q-1+\delta_{2}), \ m_{2}:=q-1+\delta_{2}, \ m_{3}:=3\delta_{2}, \ \kappa_0:=0\,.
\end{cases}
$$
\vspace{3mm}
As $x_0 \in B_{\tau_1}$ has been chosen arbitrarily, \rif{siapplica} with the choices in \rif{sceltona} implies
\begin{flalign}\label{20}
 &\notag \nr{E_{\omega,\sigma_{\eps}}(\cdot,\snr{Du_{\omega,\varepsilon}})}_{L^{\infty}(B_{\tau_{1}})}\\
 &\qquad \le\frac{c\M^{\frac{\ssss_1\chi}{\chi-1}}}{(\tau_{2}-\tau_{1})^{n/2}}\left(\int_{B_{\tau_2}}[E_{\omega,\sigma_{\eps}}(x,\snr{Du_{\omega,\varepsilon}})]^2 \dx\right)^{1/2}\nonumber \\
&\quad \qquad +c\M^{\frac{\ssss_1}{\chi-1}+ \ssss_2}\nr{\mathbf{P}^{2(q-1+\delta_{2}),1}_{2,\alpha}( \snr{Du_{\omega,\varepsilon}}+1;\cdot,(\tau_{2}-\tau_{1})/4)}_{L^{\infty}(B_{\tau_{1}})}\nonumber \\
&\quad \qquad +c\M^{\frac{\ssss_1}{\chi-1}+ \ssss_3}\nr{\mathbf{P}^{q-1+\delta_{2},1}_{2,\alpha/2}(\snr{Du_{\omega,\varepsilon}}+1;\cdot,(\tau_{2}-\tau_{1})/4)}_{L^{\infty}(B_{\tau_{1}})}\nonumber \\
&\quad \qquad +c\M^{\frac{\ssss_1}{\chi-1}+ \ssss_3}\nr{\mathbf{P}^{3\delta_{2},1}_{2,\alpha_0/2}(\snr{Du_{\omega,\varepsilon}}+1;\cdot,(\tau_{2}-\tau_{1})/4)}_{L^{\infty}(B_{\tau_{1}})},
\end{flalign}
for $c\equiv c(\data, \delta_{1})$. Recalling the definitions in \rif{eii2}-\rif{eiit}, and the choice of $\M$ in \rif{sceltona}, after a few elementary manipulations in \rif{20} we arrive at
\vspace{.5mm}
\begin{flalign}
& \notag \nr{\ti{E}_{\omega,\sigma_{\eps}}(\cdot,\snr{Du_{\omega,\varepsilon}})}_{L^{\infty}(B_{\tau_{1}})}  
\\ \quad \ \  & \leq \frac{c}{(\tau_{2}-\tau_{1})^{n/2}}\nr{\ti{E}_{\omega,\sigma_{\eps}}(\cdot,\snr{Du_{\omega,\varepsilon}})}_{L^{\infty}(B_{\tau_{2}})}^{\frac{\ssss_1\chi}{\chi-1}+\frac 12}\left(\int_{B_{s}}\ti{E}_{\omega,\sigma_{\eps}}(x,\snr{Du_{\omega,\varepsilon}}) \dx\right)^{\frac 12}\nonumber \\
&\notag\quad +c\nr{\ti{E}_{\omega,\sigma_{\eps}}(\cdot,\snr{Du_{\omega,\varepsilon}})}_{L^{\infty}(B_{\tau_{2}})}^{\frac{\ssss_1}{\chi-1}+ \ssss_2}\\
& \qquad \quad \cdot \nr{\mathbf{P}^{2(q-1+\delta_{2}),1}_{2,\alpha}(\snr{Du_{\omega,\varepsilon}}+1;\cdot,(\tau_{2}-\tau_{1})/4)}_{L^{\infty}(B_{\tau_{1}})}\nonumber\\
&\notag\quad +c\nr{\ti{E}_{\omega,\sigma_{\eps}}(\cdot,\snr{Du_{\omega,\varepsilon}})}_{L^{\infty}(B_{\tau_{2}})}^{\frac{\ssss_1}{\chi-1}+ \ssss_3}\\
& \qquad \quad \cdot \nr{\mathbf{P}^{q-1+\delta_{2},1}_{2,\alpha/2}(\snr{Du_{\omega,\varepsilon}}+1;\cdot,(\tau_{2}-\tau_{1})/4)}_{L^{\infty}(B_{\tau_{1}})}\nonumber\\
&\quad +c\nr{\ti{E}_{\omega,\sigma_{\eps}}(\cdot,\snr{Du_{\omega,\varepsilon}})}_{L^{\infty}(B_{\tau_{2}})}^{\frac{\ssss_1}{\chi-1}+ \ssss_3}\notag \\
& \qquad \quad \cdot \nr{\mathbf{P}^{3\delta_{2},1}_{2,\alpha_0/2}(\snr{Du_{\omega,\varepsilon}}+1;\cdot,(\tau_{2}-\tau_{1})/4)}_{L^{\infty}(B_{\tau_{1}})}  +c
\label{quasi}
\end{flalign}
\vspace{.5mm}
with $c\equiv c(\data, \delta_{1} )$. We now take  $\delta_{1}$ to be such that
\eqn{thanks}
$$
\begin{cases}
\displaystyle
\frac{\ssss_{1}(1,\delta_{1})\chi}{\chi-1}+\frac 12= \frac{n\delta_{1}}{4\beta}+\frac 12<1  \\
\displaystyle
\frac{\ssss_{1}(1,\delta_{1})}{\chi-1}+\ssss_{2}(1,  \delta_{2})= \frac{(n-2\beta)\delta_{1}}{4\beta}+1-\delta_{2} <1 \\
\displaystyle
\frac{\ssss_{1}(1,\delta_{1})}{\chi-1}+\ssss_{3}(1 ,\delta_{1}, \delta_{2})= \frac{(n-2\beta)\delta_{1}}{4\beta}+1 +\frac{(q+1)\delta_{1}-\delta_{2}/2}{2} <1\,.
\end{cases}
$$
Recalling \rif{ildelta}, the quantity $\delta_{1}$ is now determined as a function of the parameters $n,q,\alpha, \alpha_0$. This determines a first restriction on the size of $\mu$ via  \rif{sce5} and the choice of $\delta_{1}$. Notice that at this stage the number $\mu_{*}:=\mu_n(\delta_{1}/2)$, determining an upper bound on $\mu$, depends on $n,q,\alpha, \alpha_0$; notice also that all the above computation remain valid provided $1\leq \mu < \mu_{*}\equiv \mu_{*}(n,q,\alpha, \alpha_0)$. Finally, by \rif{thanks} and a standard continuity argument we can further restrict the value of $\mu$ finding $\mum \leq \mu_{*}$ such that 
\eqn{minori}
$$
1\leq \mu < \mum \Longrightarrow 
\begin{cases}
\displaystyle
\frac{\ssss_{1}(\mu,\delta_{1})\chi}{\chi-1}+\frac 12<1  \\
\displaystyle
\frac{\ssss_{1}(\mu,\delta_{1})}{\chi-1}+\ssss_{2}(\mu, \delta_{2})<1 \\
\displaystyle
\frac{\ssss_{1}(\mu,\delta_{1})}{\chi-1}+\ssss_{3}(\mu ,\delta_{1}, \delta_{2},)<1\,.
\end{cases}
$$
Notice also that the functions $\mu \mapsto \ssss_{1}(\mu,\cdot), \ssss_{2}(\mu,\cdot), \ssss_{3}(\mu,\cdot)$ are increasing. This finally fixes the value of  $\mum$ from the statement of Theorem \ref{t3}, with the asserted dependence on the constants.  We now want to the estimate the potential terms appearing in the right-hand side of \rif{quasi}
 by means of Lemma \ref{crit}. In this respect, notice that $
n>2\alpha
$ allows to verify \rif{required}, while we can take $\gamma=1$ in \rif{stimazza} because
 $$
\rif{ildelta} \Longrightarrow 1 > \max\left\{\frac{n(q-1+\delta_{2})}{\alpha}, \frac{3n\delta_{2}}{\alpha_0}\right\}\,.
 $$
Applying \rif{stimazza} gives 
\begin{flalign}\label{21.11}
\begin{cases}
\ \nr{\mathbf{P}^{q-1+\delta_{2},1}_{2,\alpha/2}(\snr{Du_{\omega,\varepsilon}}+1;\cdot,(\tau_{2}-\tau_{1})/4)}_{L^{\infty}(B_{\tau_{1}})}\\
\qquad \le c\nr{\snr{Du_{\omega,\varepsilon}}+1}_{L^{1}(B_{s})}^{(q-1+\delta_{2})/2}\\
\ \nr{\mathbf{P}^{2(q-1+\delta_{2}),1}_{2,\alpha}(\snr{Du_{\omega,\varepsilon}}+1;\cdot,(\tau_{2}-\tau_{1})/4)}_{L^{\infty}(B_{\tau_{1}})} \\
\qquad \le c\nr{\snr{Du_{\omega,\varepsilon}}+1}_{L^{1}(B_{s})}^{q-1+\delta_{2}}
\\
\ \nr{\mathbf{P}^{3\delta_{2},1}_{2,\alpha_0/2}(\snr{Du_{\omega,\varepsilon}}+1;\cdot,(\tau_{2}-\tau_{1})/4)}_{L^{\infty}(B_{\tau_{1}})} \\
\qquad \le c\nr{\snr{Du_{\omega,\varepsilon}}+1}_{L^{1}(B_{s})}^{3\delta_{2}/2},
\end{cases}
\end{flalign}
with $c\equiv c(n,q,\alpha, \alpha_0)$. Using \rif{21.11} in \rif{quasi}, and recalling \rif{minori}, we can use Young's  inequality to finally get
\begin{flalign*}
&\nr{\ti{E}_{\omega,\sigma_{\eps}}(\cdot,\snr{Du_{\omega,\varepsilon}})}_{L^{\infty}(B_{\tau_{1}})}\le \frac{1}{2}\nr{\ti{E}_{\omega,\sigma_{\eps}}(\cdot,\snr{Du_{\omega,\varepsilon}})}_{L^{\infty}(B_{\tau_{2}})}\nonumber \\
&  \qquad  \qquad  +\frac{c}{(\tau_{2}-\tau_{1})^{n\vartheta}}\nr{\ti{E}_{\omega,\sigma_{\eps}}(\cdot,\snr{Du_{\omega,\varepsilon}})}^{\vartheta}_{L^{1}(B_{s})}+\nr{Du_{\omega,\varepsilon}}_{L^{1}(B_{s})}^{\vartheta}+c
\end{flalign*}
with $c\equiv c(\data )$ and $\vartheta \equiv \vartheta(n,q,\alpha, \alpha_0)\geq 1$. This allows to apply Lemma \ref{l5} that provides
\begin{flalign*}
 & \nr{\ti{E}_{\omega,\sigma_{\eps}}(\cdot,\snr{Du_{\omega,\varepsilon}})}_{L^{\infty}(B_{t})} \\& \qquad \le \frac{c}{(s-t)^{n\vartheta}}\nr{\ti{E}_{\omega,\sigma_{\eps}}(\cdot,\snr{Du_{\omega,\varepsilon}})}^{\vartheta}_{L^{1}(B_{s})} +c\nr{Du_{\omega,\varepsilon}}_{L^{1}(B_{s})}^{\vartheta}+c
\end{flalign*}
from which \eqref{18} follows using \rif{0.002} and a few elementary manipulations. \end{proof}
\subsection{Proof of \trif{30bis}}\label{conv} Keeping in mind the notation of Proposition \ref{priora}, \eqref{24} and \eqref{18} give
\begin{flalign*}
& \nr{\ti{E}_{\omega,\sigma_{\eps}}(\cdot,\snr{Du_{\omega,\varepsilon}})}_{L^{\infty}(B_{t})}\\& \qquad \leq  \frac{c}{(r-t)^{n\vartheta}}\left[\mathcal{N}(u,B_{r}) +\texttt{o}_{\eps}(\omega)+\texttt{o}(\varepsilon) +|B_{r}|\right]^{\vartheta}+c\,,
\end{flalign*}
so that, recalling \rif{eiit} we find
\begin{flalign}
\notag \nr{Du_{\omega,\varepsilon}}_{L^{\infty}(B_{t})}^{2-\mu}& \leq \frac{c}{(r-t)^{n\vartheta}}\left[\mathcal{N}(u,B_{r}) +\texttt{o}_{\eps}(\omega)+\texttt{o}(\varepsilon) +|B_{r}|\right]^{\vartheta}+c  \\
& =: [\mm_{\omega,\varepsilon}(t;B_{r})]^{2-\mu}\,.
\label{28b}
\end{flalign}
 It follows that, up to not relabelled subsequences, the convergence in \rif{23} can be upgraded to $u_{\omega,\varepsilon}\rightharpoonup^{*} u_{\varepsilon}$ in $ W^{1,\infty}(B_{t})$ for every $\eps >0$. Letting $\omega\to 0$ in \rif{28b} yields 
\begin{flalign}
\notag \nr{Du_{\varepsilon}}_{L^{\infty}(B_{t})}^{2-\mu} &\leq \frac{c}{(r-t)^{n\vartheta}}\left[\mathcal{N}(u,B_{r}) +\texttt{o}(\varepsilon)+|B_{r}|\right]^{\vartheta}+c\\
& =:[\mm_{\varepsilon}(t;B_{r})]^{2-\mu},  \label{28bb}
\end{flalign}
that again holds for every $\eps>0$, with $c\equiv c (\data)$ and $\vartheta\equiv \vartheta(n,q,\alpha, \alpha_0)$. Similarly, as after \rif{28b}, by \rif{28bb} the convergence in \rif{27} can be upgraded to $u_{\varepsilon}\rightharpoonup^{*}  u$ in $ W^{1,\infty}(B_{t})$, so that \rif{30bis} (for $\bb=0$) follows letting $\eps \to 0$ in \rif{28bb}, taking $t=r/2$, recalling that $\mu <3/2$ and renaming $2\vartheta$ into $\vartheta$. 
\subsection{Local gradient H\"older continuity}\label{holsec} Since the result is local, up passing to smaller open subsets we can assume with no loss of generality that $\mathcal N(u, \Omega)$ is finite (see the comment after \rif{defilocal}). 
We fix an open subset  $\Omega_0 \Subset \Omega$ and the radius $r :=\min\{\dist(\Omega_0, \partial \Omega)/8, 1\}$. We take $B_{2r}$ centred in $ \Omega_0$ and recall that the quantities $\mm_{\omega,\varepsilon}\equiv \mm_{\omega,\varepsilon}(r;B_{2r})$ and  $\mm_{\varepsilon}\equiv \mm_{\varepsilon}(r;B_{2r})$ are defined in \rif{28b} and \rif{28bb}, respectively. With $\tau \leq r$ and $B_{\tau}\subset B_{r}$ being a ball concentric to $B_{2r}$, we define $v\in u_{\omega,\varepsilon}+W^{1,q}_{0}(B_{\tau})$ as
$$
v\mapsto \min_{w \in u_{\omega,\varepsilon}+W^{1,q}_{0}(B_{\tau})} \int_{B_{\tau}}H_{\omega,\sigma_{\eps},\textnormal{i}}(Dw; B_{\tau}) \dx
$$
so that
\eqn{28bbb}
$$
\nr{Du_{\omega, \eps}}_{L^{\infty}(B_{\tau})}+ \nr{Dv}_{L^{\infty}(B_{\tau/2})}\le \bar c \mm_{\omega,\varepsilon}^{2}
$$
holds with $\bar c\equiv \bar c (\data)$. The derivation of \rif{28bbb} follows using first \rif{28b} (with $B_{r}$ replaced by $B_{2r}$ and $t=r$) and combining it with \rif{8.fz} and easy manipulations as in Proposition \ref{fzfz} and Lemma \ref{le2}. By Proposition \ref{tecnica} in the subsequent step and \rif{28bbb}, we find that
\eqn{holly1}
$$
\mint_{B_\varrho} \snr{Dv-(Dv)_{B_\varrho}}^2 \dx \leq c_{\omega,\varepsilon} \left(\varrho/\tau\right)^{2\beta_{\omega,\varepsilon}}$$
holds for every $\rr \leq \tau/2$. Here, in the notation of Proposition \ref{tecnica} below, it is $c_{\omega,\varepsilon}:= c_*(2\bar c \mm_{\omega,\varepsilon}^2)$ and $\beta_{\omega,\varepsilon}:= \beta_*(2\bar c \mm_{\omega,\varepsilon}^2)$; these constants are non-decreasing and non-increasing functions of $\mm_{\omega,\varepsilon}$, respectively. Letting first $\omega\to 0$ and then $\eps \to 0$, we have that $$\mm_{\omega,\varepsilon}^{2-\mu}\to \mm_{\varepsilon}^{2-\mu}\to \mm^{2-\mu} := c
r^{-n\vartheta}[\mathcal N(u,B_{2r})+ |B_{2r}|]^{\vartheta}+ c$$ (see \rif{28bb}). In particular
\eqn{finalbound}
$$
 \mm^{2-\mu} \leq c r ^{-n\vartheta}[\mathcal N(u,\Omega)+1] + c =:\mm_0^{2-\mu}\,,
$$
holds with $c\equiv c (\data)$ and $\vartheta\equiv \vartheta(n,q,\alpha, \alpha_0)$ so that $\mm_0$ only depends on $\data, \dist(\Omega_0, \partial \Omega)$ and $\mathcal N(u,\Omega)$. 
Again letting $\omega\to 0$ and then $\eps\to 0$, and passing to not relabelled subsequences, we can assume that $c_{\omega,\eps}\to: = c_{\eps}\to =: \underbar{c} \leq  c_*(4\bar c \mm^2) < \infty$. Notice that by \rif{finalbound} we have $\underbar{c} \leq  c_*(4\bar c \mm_0^2)$ and this last quantity depends only on $\data, \dist(\Omega_0, \partial \Omega)$ and $\mathcal N(u, \Omega)$ but it is otherwise independent of the chosen ball $B_{2r}$.  In the following, with some abuse of notation, we keep on denoting by $c_{\omega,\eps}\geq 1$ a double-sequence of constants with the above property, typically being itself released via a non-decreasing function of $c_*(2\bar c \mm_{\omega,\varepsilon}^2)$; the exact value of the numbers $c_{\omega,\eps}$ may vary from line to line. A similar reasoning can be done for the exponents $\beta_{\omega,\eps}$, that is, we have $\beta_{\omega,\eps}\to: = \beta_{\eps}\to =: \beta \geq  \beta_*(4\bar c \mm^2)  \geq \beta_*(4\bar c \mm_0^2)=: \tilde \beta \in (0,1)$. Similarly, $\tilde \beta$ depends only  
$\data, \dist(\Omega_0, \partial \Omega)$ and $\mathcal N(u, \Omega)$ and is  independent of the ball $B_{2r}$ considered. Proceeding as for \rif{10}, and also using \rif{28bbb} repeatedly, it follows that 
\eqn{segno}
$$
 \mint_{B_{\tau}}\snr{V_{1,2-\mu}(Du_{\omega, \eps})-V_{1,2-\mu}(Dv)}^{2}  \dx   \le c_{\omega,\eps}\tau^{\tia}\,,
$$
where $\tia=\min\{\alpha, \alpha_0\}$. Using \rif{Vm} with $p=2-\mu$ and then \rif{28bbb} yields
\begin{flalign*}
& \mint_{B_{\tau/2}}\snr{Du_{\omega, \eps}-Dv}^2 \dx \\
&\quad \leq c\mint_{B_{\tau/2}}(\snr{Du_{\omega, \eps}}^2+\snr{Dv}^2+1)^{\mu/2}\snr{V_{1,2-\mu}(Du_{\omega, \eps})-V_{1,2-\mu}(Dv)}^2
\dx
\\
&\quad \leq c \mm_{\omega,\varepsilon}^{2\mu}\mint_{B_{\tau}}\snr{V_{1,2-\mu}(Du_{\omega, \eps})-V_{1,2-\mu}(Dv)}^2
\dx\,.
\end{flalign*}
By again using  \rif{segno} in the above display we conclude with 
\eqn{holly2}
$$
\mint_{B_{\tau/2}}\snr{Du_{\omega, \eps}-Dv}^2  \dx\leq   c_{\omega,\eps}\tau^{\tia}\,.
$$
We can now complete the proof with more standard arguments (see for instance \cite{gg2, manth1, manth2}), that we recall for completeness. Combining \rif{holly1} and \rif{holly2} yields
$$
\mint_{B_\varrho} \snr{Du_{\omega, \eps}-(Du_{\omega, \eps})_{B_\varrho}}^2\dx
\leq  c_{\omega, \eps}(\rr/\tau)^{2\beta_{\omega,\eps}} +c_{\omega, \eps}(\tau/\rr)^n \tau^{\tia}
$$
for every $\rr \leq \tau/2$, 
and taking $\varrho=  (\tau/2)^{1+\tia/(n+2\beta_{\omega,\varepsilon})}$ finally yields
$$
 \mint_{B_\varrho}\snr{Du_{\omega,\varepsilon}-(Du_{\omega,\varepsilon})_{B_\varrho}}^2   \dx
\leq  c_{\omega,\varepsilon} \varrho^{\frac{2\tia\beta_{\omega,\varepsilon}}{n+2\beta_{\omega,\varepsilon}+\tilde \alpha}}$$ 
for every $\rr \leq  (r/2)^{1+\tia/(n+2\beta_{\omega,\varepsilon})}$.  
In the above display we first let $\omega\to 0$ and then $\eps \to 0 $, and finally conclude with
\eqn{holdsfor}
$$
 \mint_{B_\varrho}\snr{Du-(Du)_{B_\varrho}}^2  \dx
\leq c\varrho^{\frac{2\tia\tilde \beta}{n+2\tilde \beta+\tilde \alpha}}\,,
$$ 
where, by the discussion made after \rif{finalbound}, both $c\geq 1$ and $\tilde \beta \in (0,1)$ depend only on $\data$, $\dist(\Omega_0, \partial \Omega)$ and $\mathcal N (u, \Omega)$, but are otherwise independent of the ball considered $B_{2r}$. Since $\Omega_0$ is arbitrary, \rif{holdsfor} and a standard covering argument and Campanato-Meyers integral characterization of H\"older continuity yield that for every open subset $\Omega_0 \Subset \Omega$ it holds that 
$
[Du]_{0,\beta; \Omega_0}<\infty$, where
$\beta := \tilde\alpha\tilde \beta/(n+2\tilde \beta+\tilde \alpha)
$. The proof of Theorem \ref{t3} in the case $\bb =0$ is therefore complete up to \rif{holly1}, whose proof will be given in the subsequent section. We now briefly comment on how to obtain the local gradient H\"older continuity in the nonsingular case $\bb >0$, which is simpler since it requires no approximation via the additional parameter $\omega$. For the proof of \rif{30bis} it is sufficient to use functionals $\mathcal{N}_{\bb, \eps}$ as in \rif{approssimaF}, and all the subsequent estimates remain independent of $\bb$ and $\eps$; the approximation and the convergence then only take place with respect to the parameter $\eps$ and leads to \rif{30bis}. The same reasoning applies to the proof in this section by taking $\omega \equiv \bb$. It remains to deal with the last issue, that is the local H\"older continuity of $Du$ with explicit exponent when $\bb>0$. This will be done in Section \ref{ultimina}. 

\subsection{A technical decay estimate.}\label{latecnica} Here we prove \trif{holly1}; this relies on certain hidden facts from regularity theory of singular parabolic equations that we take as a starting point to treat the nonuniformly elliptic case considered here. Relevant related methods are also in \cite{dibe}. Consistently with the notation established in Sections \ref{const} and \ref{rere}, with $\bar a > 0$ being a fixed number, in the following we denote 
\eqn{lareferenza}
$$\Hi(z):=
F(x_{\rm c}, z)+ \bar a (\snr{z}^2+\omega^2)^{q/2}$$ for every $z\in \er^n$, where $\omega \in (0,1]$ and  $x_{\rm c}\in \Omega$. Here we permanently assume \trif{assif}-\trif{assi3} and therefore $\Hi(\cdot)$ is an integrand of the type in \rif{defiH}$_2$ with $a_{\sigma, \textnormal{i}}(B)$ replaced by $\bar a$. Indeed, conditions \rif{rege.2i} apply here when accordingly recasted (see also \rif{ragnar2} below). Moreover, let us define
\eqn{ragnar}
$$
\tilde g(t) := 
\begin{cases} 
t & \mbox{if $0 \leq t \leq 1$}\\
g(t)& \mbox{if $t> 1$}\,.
\end{cases}
$$
Then \rif{assi3} and Lemma \ref{marclemma} imply that 
\eqn{assi33}
$$
|\partial_z F(x,z)| \leq c\tilde g(|z|) \Longrightarrow |\partial_z \Hi(z)| \leq c\tilde g(|z|) + c \bar a  (\snr{z}^2+\omega^2)^{(q-2)/2}|z|
$$
holds for every $z \in \er^n$, where $c\equiv c (q,L, c_g(1))$. 

\begin{proposition}\label{tecnica} Under assumptions \trif{assif}-\trif{assi3}, with $B\subset \er^n$ being a ball, let $v\in W^{1,\infty}(B)$ such that  
\eqn{exp1}
$$
\begin{cases}
\, -\diver\, \partial_z\Hi(Dv)=0\\
\, \|Dv\|_{L^{\infty}(B)} +1 \leq \mm\,.
\end{cases}
$$
There exist two functions of $\mm$, $c_*\colon [1, \infty)\to [1, \infty)$ and $\beta_*\colon [1, \infty) \to (0,1)$, such that
\eqn{diminutio}
$$
\textnormal{\texttt{osc}}_{\tau B}\,  Dv  \leq c_* \tau^{\beta_*}
$$
holds for every $\tau \in (0,1)$. 
The functions $c_*(\cdot)$ and $\beta_*(\cdot)$ are non-decreasing and non-increasing, respectively, and also depend on $n,q, \mu,\nu, L, g(\cdot)$. They are independent of $\bar a$ and $\omega \in (0,1]$. 
\end{proposition}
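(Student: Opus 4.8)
The plan is to obtain the oscillation decay \trif{diminutio} by differentiating the Euler--Lagrange equation \trif{exp1}$_1$ and running an excess-decay iteration on the gradient; the crucial structural input is that, although $\Hi(\cdot)$ fails to be uniformly elliptic with constants independent of $\bar a$ and $\omega$, its ellipticity \emph{ratio} on the range of $Dv$ is controlled by a function of $\mm$ alone. Writing $\partial_{zz}\Hi(z)=\partial_{zz}F(x_{\rm c},z)+\bar a\,\partial_{zz}[(\snr{z}^2+\omega^2)^{q/2}]$, assumption \trif{assif}$_2$ and $\snr{Dv}\le \mm$ give $\nu(\mm^2+1)^{-\mu/2}\I\le \partial_{zz}F(x_{\rm c},z)\le Lg(\mm)(\mm^2+1)^{-1/2}\I$, so the first matrix has ellipticity ratio at most $R_0(\mm):=\nu^{-1}Lg(\mm)(\mm^2+1)^{(\mu-1)/2}$; a direct computation shows that the second one equals $\bar a q[\ell_\omega(z)]^{q-2}\big(\I+(q-2)[\ell_\omega(z)]^{-2}z\otimes z\big)$, which for $1<q<3/2$ is positive definite with ellipticity ratio at most $1/(q-1)$, uniformly in $\bar a$, $\omega$ and $z$ --- this is the cancellation already recorded in \trif{ellrati}. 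Since positive matrices $\mathcal A_i$ with $\lambda_i\I\le \mathcal A_i\le r_i\lambda_i\I$, $i=1,2$, satisfy $(\lambda_1+\lambda_2)\I\le\mathcal A_1+\mathcal A_2\le\max\{r_1,r_2\}(\lambda_1+\lambda_2)\I$, the matrix $\mathcal A(x):=\partial_{zz}\Hi(Dv(x))$ obeys, for a.e. $x\in B$,
\begin{equation}\label{plantec1}
\nu(\mm^2+1)^{-\mu/2}\I\le \mathcal A(x)\,,\qquad \frac{\textnormal{highest eigenvalue of }\mathcal A(x)}{\textnormal{lowest eigenvalue of }\mathcal A(x)}\le R(\mm):=\max\{R_0(\mm),(q-1)^{-1}\}\,,
\end{equation}
where the nondegeneracy floor $\nu(\mm^2+1)^{-\mu/2}$ and $R(\mm)$ depend only on $\mm$ and on $n,q,\mu,\nu,L,g(\cdot)$, hence not on $\bar a$ nor on $\omega\in(0,1]$.

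Given \trif{plantec1}, I would argue as follows. Since $\omega>0$, $\Hi(\cdot)\in C^2(\er^n)$ is uniformly elliptic on $\{\snr z\le\mm\}$ (with constants possibly depending on $\bar a,\omega$), so the difference-quotient argument behind \trif{0.fz} (\cite[Chapter 8]{giu}, \cite{manth1, manth2}) gives $v\in W^{2,2}_{\loc}(B)$ and, differentiating \trif{exp1}$_1$ with respect to $x_s$, each $w_s:=D_sv\in W^{1,2}_{\loc}(B)$ solves $\diver(\mathcal A(x)Dw_s)=0$ in $B$. Equivalently --- and this is the form actually iterated --- one has Caccioppoli inequalities on level sets for the intrinsic quantities $V_{1,2-\mu}(Dv)$ and $\sqrt{\bar a}\,V_{\omega,q}(Dv)$, and hence for $E_{\textnormal{i}}(\snr{Dv})\approx \snr{V_{1,2-\mu}(Dv)}^2+\bar a\snr{V_{\omega,q}(Dv)}^2$, in which the degeneracy factor always appears evaluated at the same point and is therefore bounded by $R(\mm)$ via \trif{plantec1}, exactly as in the derivation of \trif{vetti}--\trif{10.fz2}. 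From these one runs an oscillation-decay iteration for $Dv$ --- equivalently for $V_{1,2-\mu}(Dv)$, from which $Dv$ inherits H\"older continuity through \trif{Vm} and $\snr{Dv}\le\mm$ --- following the intrinsic-scaling scheme for singular equations of \cite{KuM} (see also \cite{dibe}): at each dyadic scale one separates the regime where $\snr{Dv}$ is comparable to its supremum over the ball, in which the equation is genuinely uniformly elliptic with balanced constants and the classical De Giorgi/Moser oscillation lemma applies, from the regime where $\snr{Dv}$ degenerates, which is handled by the singular-equation machinery using the floor in \trif{plantec1} and the behaviour near the origin encoded in \trif{assi33}. Iterating the two alternatives over a geometric sequence of radii and summing up yields $\textnormal{\texttt{osc}}_{\tau B}\,Dv\le c_*\tau^{\beta_*}$ for all $\tau\in(0,1)$, with $c_*$ and $\beta_*$ depending on $\mm$ only through $R(\mm)$ and the floor $\nu(\mm^2+1)^{-\mu/2}$ --- hence non-decreasing and non-increasing in $\mm$ respectively, and independent of $\bar a$ and $\omega$ --- which is \trif{diminutio}.

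The main obstacle is precisely the passage from an $L^\infty$ bound to a genuine oscillation estimate for $Dv$ in the degenerate regime: there the Caccioppoli constants for the \emph{components} $w_s$ involve $\sup_B\Lambda/\inf_B\lambda$, which is \emph{not} controlled uniformly in $\bar a,\omega$ (the factor $\bar a[\ell_\omega(z)]^{q-2}$ blows up both as $\bar a\to\infty$ and, near $z=0$, as $\omega\to0$), so the classical oscillation lemma cannot be invoked off the shelf and must be replaced by the intrinsic-scaling iteration imported from the singular parabolic theory. The structural fact that makes this possible with constants depending on $\mm$ only is \trif{plantec1}: the $\bar a$- and $\omega$-dependence cancels in the ellipticity ratio, and the Hessian of the $F$-part supplies a fixed ellipticity floor. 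This is, concretely, the manifestation for the bounded-gradient frozen problem of the principle that once $Du$ is bounded the uniform ellipticity \trif{finita} is automatically restored.
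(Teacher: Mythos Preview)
Your outline is correct and matches the paper's approach: the paper likewise identifies the pointwise ellipticity-ratio bound (your \trif{plantec1}, recorded there as \trif{ellrati}) and then runs the DiBenedetto--Kuusi--Mingione alternative, made explicit via two lemmas---a measure-theoretic alternative on each ball (either the level set $\{D_sv<\lambda/2\}$ has small measure, forcing $\snr{Dv}\ge\lambda/4$ on the half-ball and hence De Giorgi--Nash--Moser decay for the linearized equation with constants depending only on $R(\mm)$, or else $\sup\nr{Dv}$ shrinks by a fixed factor $\tilde\eta<1$) and a separate termination lemma for the moment when the iterated scale $\lambda_k=\eta^k\lambda_0$ drops below $m\omega$, at which point $\omega$ itself regularizes the $q$-part and uniform ellipticity is restored directly. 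The one point your sketch leaves implicit is precisely this last step: the shrinking alternative is proved under the hypothesis $\nr{Dv}+\omega\le\lambda$, so the iteration cannot be continued once $\lambda\lesssim\omega$, and a different (easy) argument covers that endgame---so your remark that the degenerate regime is handled ``using the floor in \trif{plantec1}'' is not quite the mechanism, the floor alone not controlling the upper eigenvalue.
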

The proof of 
\rif{diminutio} is achieved via two preliminary lemmas. In the rest of section, we denote $\|Dv\|:= \max_{1\leq s\leq n}|D_s v|$.
\vspace{.5mm} 
\begin{lemma}\label{limlem1} With $B\subset \er^n$ being a ball, assume that $v\in W^{1,\infty}(B)$ satisfies \eqref{exp1}$_1$ and 
\eqn{limitatio}
$$
\|Dv\|+\omega \leq \lambda \ \  \mbox{in $B$, where $\lambda \leq \mm$, $\mm\geq 1$}\,.
$$
There exists $\sigma \equiv \sigma(n,\mu, q, \nu, L, c_{g}(1), \mm, g(\mm))\in (0,1)$, which is independent of $\bar a$ and is a non-increasing function of $\mm$, such that if either
\eqn{limitatio2}
$$
\begin{cases}
\ \displaystyle \frac{|B\cap \{D_s v< \lambda /2\}|}{|B|}\leq \sigma\ \quad   \mbox{or} \ \quad 
\frac{|B\cap \{D_s v> -\lambda/2\}|}{|B|}\leq \sigma\\
 \  \quad =:\mbox{condition $\textnormal{\texttt{ex}}(B, \lambda, s)$}
 \end{cases}
$$
holds for some $s \in \{1, \ldots, n\}$, then 
\eqn{dasotto}
$$\snr{Dv} \geq \snr{D_s v}\geq \lambda /4 \quad \mbox{holds a.e.\,in $B/2$}\,.$$ 
In this case 
\eqn{decadi}
$$
\textnormal{\texttt{osc}}_{\tau B}\, Dv \leq c_{1} \tau^{\beta_1} \lambda \qquad \mbox{holds for every $\tau \in (0,1)$,}
$$
where $c_{1}\geq 1$ and $\beta_{1} \in (0,1)$ both depend on $n,\mu, q, \nu, L,\mm, g(\mm)$, but are otherwise independent of $\bar a$. The constants $c_{1}$ and $\beta_1$ are non-decreasing and non-increasing functions of $\mm$, respectively. On the other hand there exists $\tilde \eta\equiv \tilde \eta (n,\mu, q,\nu, L, \mm, g(\mm)) \in (1/2,1)$ such that, if $\textnormal{\texttt{ex}}(B, \lambda, s)$ fails for every $s \in \{1, \ldots, n\}$, then 
\eqn{decadi2}
$$\|Dv\|\leq \tilde \eta \lambda \quad \mbox{holds a.e.\,in $B/2$}\,.$$ The constant $\tilde \eta$ is a non-decreasing function of $\mm$. 
\end{lemma}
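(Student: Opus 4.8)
The plan is to run the classical De Giorgi alternative for the single partial derivatives $D_sv$, the only real difficulty being to keep \emph{every} constant independent of the degenerate parameters $\bar a$ and $\omega\in(0,1]$. Since $\omega>0$, the integrand $\Hi$ of \rif{lareferenza} is of the type in \rif{defiH}$_2$ and belongs to $C^2$; as $\|Dv\|_{L^\infty(B)}+1\leq\mm$ by \rif{exp1}$_2$, it is uniformly convex on $\{|z|\leq\mm\}$, so the regularity theory of \cite[Chapter~8]{giu}, \cite{manth1,manth2} (exactly as for \rif{0.fz}) gives $v\in W^{2,2}_{\loc}(B)\cap W^{1,\infty}(B)$ with $\partial_z\Hi(Dv)\in W^{1,2}_{\loc}(B)$, and \rif{exp1}$_1$ may be differentiated: each $w_l:=D_lv$, $l\in\{1,\dots,n\}$, solves $\diver(\mathcal A(x)Dw_l)=0$ weakly in $B$, with $\mathcal A(x):=\partial_{zz}\Hi(Dv(x))$. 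The decisive point is that, combining the (suitably recast) eigenvalue bounds \rif{rege.2i}$_3$ with the elementary manipulation behind \rif{ellrati}, on the range $\{|z|\leq\mm\}$ the ellipticity ratio of $\mathcal A$ is bounded by a constant $\Gamma$ that is \emph{independent of $\bar a$ and $\omega$} and depends only on $n,\mu,q,\nu,L,c_g(1),\mm$ and $g(\mm)$: the $\bar a$-contributions to the largest and smallest eigenvalue cancel up to the factor $(q-1)^{-1}$, and the $\omega$-contribution merely rescales $\mathcal A$. Consequently all constants in the De Giorgi--Nash--Moser theory, in the weak Harnack inequality and in the De Giorgi measure lemmas for $\diver(\mathcal A D\,\cdot\,)=0$ depend only on $n$ and $\Gamma$, hence only on the quantities in the statement and not on $\bar a,\omega$.

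\emph{The exit alternative: \rif{dasotto} and \rif{decadi}.} Suppose $\textnormal{\texttt{ex}}(B,\lambda,s)$ holds for some $s$; treat the first option, $|B\cap\{D_sv<\lambda/2\}|\leq\sigma|B|$ (the second is identical with $\psi$ below replaced by $(\lambda/2+D_sv)_+$). Then $\psi:=(\lambda/2-D_sv)_+$ is a nonnegative subsolution in $B$, with $0\leq\psi\leq 3\lambda/2$ by \rif{limitatio} and $|\{\psi>0\}\cap B|=|\{D_sv<\lambda/2\}\cap B|\leq\sigma|B|$. A standard De Giorgi level-set iteration between the concentric balls $B$ and $B/2$ produces $\sigma_0=\sigma_0(n,\Gamma)\in(0,1)$ such that $\sigma\leq\sigma_0$ forces $\psi\leq\lambda/4$ a.e.\ in $B/2$, i.e.\ $D_sv\geq\lambda/4$, whence $|Dv|\geq|D_sv|\geq\lambda/4$ a.e.\ in $B/2$: this is \rif{dasotto}. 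We fix $\sigma:=\sigma_0(n,\Gamma)$, which is independent of $\bar a$ and, $\Gamma$ being non-decreasing in $\mm$, non-increasing in $\mm$. For \rif{decadi}, apply the interior De Giorgi--Nash--Moser H\"older estimate to each solution $w_l=D_lv$: writing $r$ for the radius of $B$, for every $\rho\leq r/4$ one gets
$$\osc_{B_\rho}w_l\leq c(n,\Gamma)\,(\rho/r)^{\beta_1}\Big(\mint_{B/2}|w_l|^2\dx\Big)^{1/2}\leq c(n,\Gamma)\,(\rho/r)^{\beta_1}\lambda,$$
with $\beta_1=\beta_1(n,\Gamma)\in(0,1)$; summing over $l$, and covering the range $\tau\in[1/4,1)$ trivially through $\osc_{\tau B}Dv\leq 2\lambda$, yields \rif{decadi} with $c_1,\beta_1$ of the asserted type.

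\emph{The non-exit alternative: \rif{decadi2}.} Now suppose $\textnormal{\texttt{ex}}(B,\lambda,s)$ fails for every $s$, i.e.\ $|B\cap\{D_sv<\lambda/2\}|>\sigma|B|$ and $|B\cap\{D_sv>-\lambda/2\}|>\sigma|B|$ for all $s$. Since $D_sv$ and $-D_sv$ both solve the linear equation and $|D_sv|\leq\|Dv\|\leq\lambda$ in $B$ by \rif{limitatio}, both $\lambda-D_sv$ and $\lambda+D_sv$ are nonnegative supersolutions in $B$. Applying the weak Harnack inequality to $\lambda-D_sv$, which exceeds $\lambda/2$ on a set of measure $>\sigma|B|$,
$$\inf_{B/2}(\lambda-D_sv)\geq c_H^{-1}\Big(\mint_{B}(\lambda-D_sv)^{p_0}\dx\Big)^{1/p_0}\geq c_H^{-1}\sigma^{1/p_0}\,\frac{\lambda}{2}=:2c_2\lambda,$$
with $p_0>0$ and $c_H\geq1$ depending only on $n,\Gamma$; replacing $c_2$ by $\min\{c_2,1/8\}$ we obtain $D_sv\leq(1-2c_2)\lambda$ in $B/2$, and symmetrically (from $\lambda+D_sv$) $D_sv\geq-(1-2c_2)\lambda$ in $B/2$. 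As $s$ is arbitrary, $\|Dv\|\leq\tilde\eta\lambda$ a.e.\ in $B/2$ with $\tilde\eta:=1-2c_2\in[3/4,1)\subset(1/2,1)$; since $c_H$ is non-decreasing and $\sigma$ non-increasing in $\mm$, $\tilde\eta$ is a non-decreasing function of $\mm$. This proves \rif{decadi2}.

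\emph{Expected main obstacle.} Everything hinges on the uniformity exploited above: the largest eigenvalue of $\partial_{zz}\Hi$ is unbounded as $|z|,\omega\to0$, so one cannot simply invoke standard uniformly elliptic estimates; it is essential that the ellipticity \emph{ratio} stays bounded by $\Gamma(\mm)$ (cf.\ \rif{ellrati}) and that the De Giorgi/Moser constants are scale-invariant, hence ratio-dependent only. Securing this — together with the bookkeeping of the monotone dependence of $\sigma,\tilde\eta,c_1,\beta_1$ on $\mm$, and of their independence of $\bar a,\omega$ — is the substantive part; the rest is the classical De Giorgi alternative (see also \cite{dibe}). The differentiability of the Euler--Lagrange equation for a merely Lipschitz $v$ is a subordinate point, routine here because $\omega>0$ renders $\Hi$ smooth and uniformly convex on $\{|z|\leq\mm\}$ (cf.\ \rif{0.fz}); assumption \rif{assi3} does not enter this lemma.
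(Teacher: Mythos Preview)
There is a genuine gap. Your central claim is that since the \emph{pointwise} ellipticity ratio of $\mathcal A(x)=\partial_{zz}\Hi(Dv(x))$ is bounded by $\Gamma$ (which is true, by \rif{ellrati}), the De~Giorgi--Nash--Moser constants depend only on $n,\Gamma$. But DGNM theory requires \emph{uniform} bounds $\lambda_0\,\mathds{I}_d\leq\mathcal A(x)\leq\Lambda_0\,\mathds{I}_d$ for all $x$, and its constants depend on the \emph{nonlocal} ratio $\Lambda_0/\lambda_0=\sup_x\Lambda(x)/\inf_x\lambda(x)$. This ratio is \emph{not} controlled independently of $\bar a,\omega$: at points where $|Dv|$ is close to $0$ the upper eigenvalue is of order $\bar a\,\omega^{q-2}$, while at points where $|Dv|\approx\lambda$ the lower eigenvalue is of order $(\lambda^2+1)^{-\mu/2}+\bar a\,\lambda^{q-2}$, and the quotient blows up as $\omega\to0$. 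Your ``scale-invariance'' argument divides the equation by a \emph{constant}; dividing by the variable weight $\lambda(x)$ destroys the equation. With only a pointwise ratio bound you are in the Fabes--Kenig--Serapioni regime, which needs an $A_2$ condition on the weight that you have not (and cannot easily) verify.

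The paper avoids this in three structure-dependent steps. For \rif{dasotto} it does \emph{not} pass to the linearized equation: it tests the differentiated equation with $(D_sv-\kappa)_-\phi^2$ and integrates by parts so that $\partial_z\Hi(Dv)$ appears directly (display \rif{fromthis}); the bound \rif{assi33} on $\partial_z\Hi$ then yields a Caccioppoli inequality whose constant is $G(\lambda)/(\lambda_{\bar a}(\lambda)\lambda^2)$, a ratio evaluated at the \emph{single} scale $|z|=\lambda$, which is bounded by $c\,[\mm^{2(\mu-1)}g(\mm)^2+1]$ (this is \rif{limitatio5}). Only \emph{after} \rif{dasotto} gives $|Dv|\geq\lambda/4$ on $B/2$ does one obtain genuine uniform ellipticity for the normalized matrix $[\lambda_{\bar a}(\lambda)]^{-1}\partial_{zz}\Hi(Dv)$ (see \rif{elli-lim}), and only then does DGNM yield \rif{decadi}. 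For \rif{decadi2} the same obstruction is handled by observing that on the support of $(D_sv-\lambda/2)_+$ one has $|Dv|\geq\lambda/2$, so one redefines the coefficient as the identity off that support and works with a subsolution of a uniformly elliptic equation (the device from \cite[pp.~784--786]{KuM}). Finally, your claim that \rif{assi3} ``does not enter'' is incorrect: it is precisely what turns $g$ into $\tilde g$ in \rif{ragnar}--\rif{assi33}, and without $\tilde g(t)=t$ for $t\leq1$ the quantity $[\tilde g(\lambda)]^2(\lambda^2+1)^{\mu}/\lambda^2$ in \rif{limitatio5} would blow up as $\lambda\to0$.
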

\begin{proof} For the proof of the first assertion we use and extend some of the arguments in \cite[Proposition 3.7]{KuM}; see also \cite{dibe} for the case of the classical $p$-Laplacian operator. We can of course confine ourselves to the case the first inequality in \rif{limitatio2} occurs (for a $\sigma$ to be determined in the course of the proof), the other being similar. By scaling, as in \rif{notational0}, we can assume that $B\equiv \mathcal B_1$. Since conditions \rif{regecor.1} are satisfied by $H_{\textnormal{i}}(\cdot)$, it is standard to prove that $v\in W^{2,2}_{\loc}(\mathcal B_1)$. We can then differentiate the equation $\diver\, \partial_z H_{\textnormal{i}} (Dv)=0$ in the $x_s$ direction, thereby obtaining $\diver\,  (\partial_{zz} H_{\textnormal{i}} (Dv)DD_sv)=0$, that is an analogue of \rif{1.fz}. Recalling that $v \in W^{1,\infty}(\mathcal B_1)\cap W^{2,2}_{\loc}(\mathcal B_1)$ (as in \rif{0.fz} and by \rif{limitatio}), we then use as test function $(D_s v-\kappa)_{-}\phi^2$, where $\phi \in C^{\infty}_{0}(\mathcal B_1)$ is non-negative and $0 \leq \kk \leq \lambda$; this is still possible by \rif{regecor.1}). Integrating by parts the resulting equation we find 
\begin{flalign}
\notag  & \int_{\mathcal B_1} \langle D_s(\partial_z H_{\textnormal{i}}(Dv)), D (D_s v-\kappa)_{-}\rangle \phi^2\dx  \\ &\notag  \qquad = \int_{\mathcal B_1} 
\langle \partial_z H_{\textnormal{i}}(Dv), D_s D \phi^2\rangle (D_s v-\kappa)_{-} \dx  \\
&  \qquad \quad - 2 \int_{\mathcal B_1\cap \{D_s v<\kk\}}  
\langle   \partial_z H_{\textnormal{i}}(Dv), D\phi\rangle D_{ss}v  \phi \dx\,.
\label{fromthis}
\end{flalign} 
We define the functions
\vspace{.5mm}
\eqn{ragnar2}
$$
\begin{cases}
G_0(t):= \tg (t) +\bar a (t^2 +\omega^2)^{(q-1)/2} \\
G(t):= [\tg (t)]^2(t^2+1)^{\mu/2} +\bar a (t^2 +\omega^2)^{q/2} \\
\lambda_{\bar a}(t)= 
(t^2+1)^{-\mu/2}+ (q-1)\bar a(t^2+\omega^2)^{(q-2)/2}\\
\end{cases}
$$ for $t\geq 0$, so that 
\eqn{holdy}
$$G_0(\lambda)\lambda +\lambda_{\bar a}(\lambda)\lambda^2 + [G_0(\lambda) ]^2/\lambda_{\bar a}(\lambda)\leq cG(\lambda)$$
holds for $c\equiv c (q)$. By also using \rif{rege.2i}, \rif{assi33} and \rif{limitatio}, and the monotonicity features of $ \lambda_{\bar a}(\cdot)$ and $G_0(\cdot)$ in \rif{fromthis}, we easily find 
\begin{flalign*}
\notag & \lambda_{\bar a}(\lambda)\int_{\mathcal B_1} |D(D_s v-\kappa)_{-}|^2\phi^2\dx\\
&\qquad  \leq c G_0(\lambda)  \int_{\mathcal B_1}(\phi |D^2 \phi|+|D\phi|^2)(D_s v-\kappa)_{-} \dx\\
& \qquad \quad  + 
c G_0(\lambda)  \int_{\mathcal B_1} \phi |D\phi||D(D_s v-\kappa)_{-} |\dx\,.
\end{flalign*}
By then using Young's inequality and \rif{holdy}, and yet recalling that $(D_s v-\kappa)_{-}\leq 2\lambda$, we find 
\begin{flalign}
\notag & \lambda_{\bar a}(\lambda)\|D(\phi(D_s v-\kappa)_{-})\|_{L^2(\mathcal B_1)}^2\\
& \qquad \leq cG(\lambda) |\mathcal B_1\cap \{D_s v<\kk\}\cap \{\phi>0\}| (\|D\phi\|_{L^{\infty}}^2+\|D^2\phi\|_{L^{\infty}})\,.\label{limitatio3}
\end{flalign}
For integers $m\geq 0$ we choose levels $\{\kk_m=\lambda(1+1/2^m)/4\}$, radii $\{\rr_m=1/2+1/2^{m+1}\}$ and cut-off functions $\phi_m\in C^{\infty}_0(\mathcal B_{\rr_{m}})$, with $\phi_m\equiv 1$ on $\mathcal B_{\rr_{m+1}}$, $\|D\phi_m\|_{L^\infty}^2+\|D^2\phi_m\|_{L^\infty} \lesssim 4^m$. With 
\eqn{disA}
$$A_m:=\{D_sv< \kk_m\}\cap B_{\rr_m}$$ estimate \rif{limitatio3} becomes $$ \lambda_{\bar a}(\lambda)\|D(\phi_m(D_s v-\kappa_{m})_{-})\|_{L^2(\mathcal B_1)}^2 \leq c 4^mG(\lambda)|A_m|\,.$$ We then find, via Sobolev embedding 
\begin{flalign*}
  \lambda_{\bar a}(\lambda) (\kk_{m}-\kk_{m+1})^2|A_{m+1}| & \leq  \lambda_{\bar a}(\lambda)\|\phi_m(D_s v-\kappa_m)_{-}\|_{L^2(\mathcal B_1)}^2\\ & \leq c  \lambda_{\bar a}(\lambda)\|D(\phi_m(D_s v-\kappa_m)_{-})\|_{L^2(\mathcal B_1)}^2|A_m|^{1/n}\\ & \leq c 4^mG(\lambda)|A_m|^{1+1/n}\,.
\end{flalign*}
Observing that $\kk_{m}-\kk_{m+1}\approx 2^{-m}\lambda$ and that \rif{limitatio} implies $\lambda^2\approx \lambda^2+\omega^2$, the above display gives 
\begin{eqnarray}
\notag |A_{m+1}|  & \leq & \frac{c 2^{4m} G(\lambda)}{ \lambda_{\bar a}(\lambda)\lambda^2} |A_m|^{1+1/n}\\
&\notag  \stackrel{\rif{ragnar},\rif{ragnar2}}{\leq}  &c \left[ (\lambda^2+1)^{\mu-1}[g(\lambda)]^2+1\right] 2^{4m} |A_m|^{1+1/n}\\
&\stackrel{\rif{limitatio}}{\leq} &c \left[ \mm^{2(\mu-1)}[g(\mm)]^2+1\right]2^{4m} |A_m|^{1+1/n} \label{limitatio5}
\end{eqnarray}
with $c\equiv c(n,\mu, q, \nu, L,c_{g}(1))$ which is independent of $\bar a$. 
Inequality \rif{limitatio5} allows to perform a standard geometric iteration (see for instance \cite[Lemma 7.1]{giu}) leading to $ D_s v\geq \lambda /4$ once the first inequality in \rif{limitatio2} is verified with $\sigma\equiv \sigma (n,\mu, q, \nu, L, c_{g}(1),\mm, g(\mm))$ small enough (that corresponds to require that $|A_0|$ is small, see \rif{disA}). This allows to prove \rif{decadi} by considering
\eqn{differenza}
$$-\diver \, (\texttt{a}(x)DD_t v)=0 \,,\quad \texttt{a}(x):= [ \lambda_{\bar a}(\lambda) ]^{-1}\partial_{zz} H_{\textnormal{i}}(Dv(x))$$
this time for every $t \in \{1, \ldots, n\}$. Recalling \rif{rege.2i}-\rif{ellrati} it follows that $\texttt{a}(\cdot)$ satisfies 
\eqn{elli-lim}
$$
|\xi|^2 \stackleq{limitatio} c \langle \texttt{a}(x)\xi, \xi \rangle\,, \quad \snr{\texttt{a}(x)}\stackleq{dasotto} c  [\mm^{\mu-1}g(\mm)+1]  
$$
a.e. in $x\in \mathcal B_{1/2}$ and for every $\xi \in \er^n$, where $c \equiv c (n,\mu, q, \nu, L)$. Therefore a standard application of De Giorgi-Nash-Moser theory to $D_tv$  and \rif{limitatio} imply the validity of \rif{decadi} (note that for this we obtain \rif{decadi} for $\tau \leq 1/8$, and then the case $1/8< \tau \leq 1$ follows trivially by \rif{limitatio}). We now turn to \rif{decadi2}; the proof is a variant of the one in \cite[Proposition 3.11]{KuM} apart from the parabolic case considered there. The only remark we need to do is that the analogue of equation in \cite[(3.53)]{KuM} is here given by $\diver \, (\texttt{b}(x)Dz)=0$, where  $\texttt{b}(x):= [\lambda_{\bar a}(\lambda) ]^{-1} \partial_{zz}H_{\textnormal{i}}(Dv(x))=\texttt{a}(x)$ if $x$ belongs to the support of $\tilde v := (D_sv-\lambda/2)_{+}$, and $\texttt{b}(x)\equiv \mathds{I}_d$ (the identity matrix) otherwise. Then $\tilde v$ turns out to be a weak subsolution. Moreover, as $|Dv| \geq \lambda/2$ on the support of $\tilde v$, it follows that $\texttt{b}(x)$ also satisfies \rif{elli-lim}. We can now use the arguments in \cite[pp. 784-786]{KuM} to conclude with \rif{decadi2}. Finally, as for the monotone dependence on $\mm$ of the constants $c_{1}, \beta_{1}, \tilde \eta, \sigma$, this is classical, and it is a consequence of the fact that all the estimates above feature constants, usually denoted by $c$, that are non-decreasing functions of $\mm$ and $g(\mm)$; in turn, $g(\cdot)$ is a non-decreasing function too.  \end{proof}
\begin{lemma} \label{limlem2} Let $v$ be as in Lemma \ref{limlem1}, assume \trif{limitatio} and that $m \omega \geq \lambda$ holds for some integer $m\geq 1$. Then
\eqn{decadi22}
$$
\textnormal{\texttt{osc}}_{\tau B}\, Dv \leq c_{2} \tau^{\beta_2} \lambda \qquad \mbox{holds for every $\tau \in (0,1)$,}
$$
where $c_{2}\geq 1$ and $\beta_{2} \in (0,1)$ both depend on $n,\mu, q, \nu, L,\mm,g(\mm), m$, but are otherwise independent of $\bar a$. The constants $c_{2}$ and $\beta_2$ are non-decreasing and non-increasing functions of $\mm$ and $m$, respectively. \end{lemma}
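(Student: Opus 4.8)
The plan is to run, essentially verbatim, the argument already used for \eqref{decadi} in the second half of the proof of Lemma \ref{limlem1} (the part around \eqref{differenza}--\eqref{elli-lim}): one deduces the oscillation decay for $Dv$ from the fact that each component $D_tv$ solves a linear, uniformly elliptic equation in divergence form with bounded measurable coefficients. The only new point is that the role played there by the lower bound $|Dv|\geq\lambda/4$ coming from \eqref{dasotto} is now taken over by the quantitative comparability $m\omega\geq\lambda$. Concretely, after rescaling as in \eqref{notational0} so that $B\equiv\mathcal B_1$, I would first record that $v\in W^{2,2}_{\loc}(\mathcal B_1)$ (which uses the recasted \eqref{regecor.1}, available since $\omega\in(0,1]$), then differentiate $\diver\,\partial_z\Hi(Dv)=0$ in the $x_t$ direction to obtain $\diver\,(\texttt{a}(x)DD_tv)=0$ for every $t\in\{1,\dots,n\}$, where $\texttt{a}(x):=[\lambda_{\bar a}(\lambda)]^{-1}\partial_{zz}\Hi(Dv(x))$ and $\lambda_{\bar a}(\cdot)$ is the function in \eqref{ragnar2}; it is convenient to also set $\Lambda_{\bar a}(t):=(t^2+1)^{-1/2}g(t)+\bar a(t^2+\omega^2)^{(q-2)/2}$, in analogy with \eqref{ill}.

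The heart of the matter, and the step I expect to be the main obstacle, is to show that $\texttt{a}(\cdot)$ is uniformly elliptic on $\mathcal B_1$ with constants depending only on $n,\mu,q,\nu,L,\mm,g(\mm),m$ and, crucially, \emph{not} on $\bar a$ or $\omega$. The lower bound is easy: both summands of $\lambda_{\bar a}(\cdot)$ are non-increasing (here $\mu>0$ and $q<2$), so $\lambda_{\bar a}(|Dv(x)|)\geq c(n,\mu,q)\lambda_{\bar a}(\lambda)$ by \eqref{limitatio}, and the recasted version of \eqref{rege.2i} gives $|\xi|^2\leq c\langle\texttt{a}(x)\xi,\xi\rangle$. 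For the upper bound, the recasted \eqref{rege.2i} reduces everything to estimating $\Lambda_{\bar a}(|Dv|)/\lambda_{\bar a}(\lambda)$. Here I would split into the nondegenerate contribution, bounded via \eqref{limitatio}, $\mm\geq1$ and concavity of $g$ by $(|Dv|^2+1)^{-1/2}g(|Dv|)(\lambda^2+1)^{\mu/2}\leq c(n,\mu)\mm^{\mu}g(\mm)$, and the $\bar a$-contribution,
\[
\frac{\bar a(|Dv|^2+\omega^2)^{(q-2)/2}}{(q-1)\bar a(\lambda^2+\omega^2)^{(q-2)/2}}=\frac{1}{q-1}\left(\frac{\lambda^2+\omega^2}{|Dv|^2+\omega^2}\right)^{(2-q)/2}\leq\frac{1}{q-1}\left(1+\frac{\lambda^2}{\omega^2}\right)^{(2-q)/2}\leq c(q)\,m^{2-q}\,,
\]
where $q<2$ and, decisively, the hypothesis $m\omega\geq\lambda$ have been used; without it, when $\omega\ll\lambda$ the left-hand side blows up as $|Dv|$ gets small, which is exactly why this case requires a separate lemma. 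Altogether $|\texttt{a}(x)|\leq c(n,\mu,q,\nu,L)[\mm^{\mu}g(\mm)+1]m^{2-q}=:\mathfrak L$ a.e.\ in $\mathcal B_1$.

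Once this uniform ellipticity is secured, I would conclude by De Giorgi--Nash--Moser theory applied to each component $D_tv$: this yields $\textnormal{\texttt{osc}}_{\tau\mathcal B_1}D_tv\leq c\,\tau^{\beta}\,\textnormal{\texttt{osc}}_{\mathcal B_{1/2}}D_tv$ for every $\tau\in(0,1/8]$, with $c\geq1$ and $\beta\in(0,1)$ depending only on $n$ and $\mathfrak L$. Summing over $t$ and using $\textnormal{\texttt{osc}}_{\mathcal B_{1/2}}D_tv\leq 2\lambda$ from \eqref{limitatio} gives $\textnormal{\texttt{osc}}_{\tau\mathcal B_1}Dv\leq c_2\tau^{\beta_2}\lambda$ for $\tau\in(0,1/8]$, the range $\tau\in(1/8,1)$ being immediate from \eqref{limitatio}; undoing the scaling yields \eqref{decadi22}. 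The asserted monotone dependence of $c_2$ and $\beta_2$ on $\mm$ and $m$ then follows because $\mathfrak L$ is non-decreasing in $\mm$, in $g(\mm)$ (which is itself non-decreasing) and in $m$, and the De Giorgi--Nash--Moser constants inherit this monotonicity. Everything after the ellipticity estimate is the routine linear argument already carried out for \eqref{decadi}; the one genuinely new ingredient is the use of $m\omega\geq\lambda$ in the $\bar a$-piece above.
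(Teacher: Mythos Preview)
Your proposal is correct and follows essentially the same route as the paper: rescale to $\mathcal B_1$, consider the linearized equation $\diver(\texttt{a}(x)DD_tv)=0$ with $\texttt{a}(x)=[\lambda_{\bar a}(\lambda)]^{-1}\partial_{zz}\Hi(Dv(x))$, use $\|Dv\|\leq\lambda$ for the lower ellipticity bound and the hypothesis $m\omega\geq\lambda$ to control the $\bar a$-piece of the upper bound, then conclude via De Giorgi--Nash--Moser. The only cosmetic difference is that the paper records the upper bound in additive form $|\texttt{a}(x)|\leq c[\mm^{\mu}g(\mm)+m^{2-q}]$ rather than your multiplicative $c[\mm^{\mu}g(\mm)+1]m^{2-q}$, but since $m\geq1$ these are equivalent for the purpose at hand.
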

\begin{proof} As for Lemma \ref{limlem1} we reduce to the case $B\equiv \mathcal B_1$ and  consider \rif{differenza}. The first inequality in \rif{elli-lim} follows as $\|Dv\|\leq \lambda$ a.e.; using $m \omega \geq \lambda$ and \rif{rege.2i}$_3$ we instead have  
$$\snr{\texttt{a}(x)}\leq  c  [\mm^{\mu}g(\mm)+m^{2-q}] \quad 
\mbox{a.e.\,in $x\in \mathcal B_1$}\,,$$ 
where $c \equiv c (n,\mu, q, \nu, L)$.
Now \rif{decadi22} follows from standard De Giorgi-Nash-Moser theory. 
\end{proof}
\vspace{1mm}
\begin{proof}[of Proposition \ref{tecnica}] We take $\sigma, \tilde{\eta}\equiv \sigma, \tilde{\eta} (n,\mu, q, \nu, L, \mm, g(\mm))\in (0,1)$ from Lemma \ref{limlem1} and determine an integer $m\equiv m (n,\mu, q, \nu, L, \mm, g(\mm)) \geq 1$ such that $\eta:= \tilde \eta + 1/m<1$; notice that $m$ can be determined as a non-decreasing function of $\mm$. All in all, once $\mathfrak{m}$ is fixed so is $m$ and therefore $\eta$. We set $\lambda_0:= \sup_{B}\, \|Dv\|+\omega$, $\lambda_k:= \eta^{k} \lambda_0$, $B_k:=B/2^k$ for every $k \in \en$, and define $\bar k \in \en$ as the smallest non-negative integer for which $m\omega> \lambda_{\bar k}$. This implies that 
\eqn{decadi2h}
$$
\omega \leq \frac{\lambda_{k}}{m}, \quad \mbox{for \  $0\leq k < \bar k$}\,.
$$
Note that \rif{decadi2h} is empty when $\bar k =0$. We further define 
$$A:= \{k \in \en \, \colon 0\leq k < \bar k \ \mbox{and}  \ \mbox{$\textnormal{\texttt{ex}}(B_k, \lambda_k, s)$ holds for some $s\in \{1, \ldots, n\}$}\}$$ 
and $\tilde k$ as $\tilde k := \min A$ if $A\not=\emptyset$, and $\tilde k=\bar k$ if $A=\emptyset$. 
Using \rif{decadi2} and \rif{decadi2h} iteratively, we find
\eqn{decadi3}
$$
\sup_{B_k}\|Dv\|+\omega \leq \lambda_k= \eta^k \lambda_0 \quad    \mbox{for every $0 \leq k \leq  \tilde k$} \,.
 $$
On the other hand, by the very definition of $\tilde k$ and Lemmas \ref{limlem1}-\ref{limlem2} (the latter is only needed when $\tilde k=\bar k$ and therefore when $A=\emptyset$), we find that
\eqn{decadi4}
$$ 
\textnormal{\texttt{osc}}_{\tau B_{\tilde k}}\, Dv \leq \max\{c_1, c_2\} \tau^{\min\{\beta_1, \beta_2\}} \lambda_{\tilde k} \qquad \mbox{holds for \ $0< \tau < 1$}\,.
$$
At this stage \rif{diminutio} follows via a standard interpolation argument combining \rif{decadi3}-\rif{decadi4}. 
\end{proof}
\begin{remark}\label{mare}
As a consequence of the local gradient boundedness obtained in Section \ref{conv}, the arguments developed for Proposition \ref{tecnica} do not require any upper bound on $\mu$. Moreover, the specific structure in \rif{lareferenza} is not indispensable and the methods used here can be used as a starting point to treat general nonautonomous functionals with $(p,q)$-growth of the type considered in \cite{M2}.
\end{remark}
\subsection{Improved H\"older exponent when $\bb>0$}\label{ultimina}
The $ C^{1, \tilde \alpha/2}_{\loc}(\Omega)$-regularity proof follows almost verbatim the one in \cite[Section 10]{piovra} since now we already know that $Du$ is locally H\"older continuous in $\Omega$ with exponent $\beta$. We confine ourselves to give a few remarks on the main modifications and to facilitate the adaptation we use the same notation introduced in \cite{piovra}. We set $M:=\|Du\|_{L^\infty(\Omega)}+[Du]_{0, \beta;\Omega}+1$ (we can assume this is finite since our result is local). This time we take $A_r(z):= \partial_z H_{r^{\tia},\textnormal{i}}(z)
= \partial_z F(x_{\rm c}, z) +
qa_{r^{\tia}, \textnormal{i}}(B_{r})[\ell_{\bb}(z)]^{q-2}z
$ and recall that $|\partial_z H_{r^{\tia},\textnormal{i}}(z)-\partial_z H(x, z)|\leq cr^{\tia}[\ell_{1}(z)]^{q-1}$ holds whenever $x\in B_{r}$. With $v \in u+W^{1,q}_0(B_{r})$ to solve 
$\diver\, A_r(Dv)=0$ in $B_{r}$, as in 
\rif{28bbb} we gain
$
\nr{Dv}_{L^{\infty}(B_{r/2})}\le  c M^{2}
$ with $c\equiv c (\data)$. With these informations estimate $
\nr{Du-Dv}_{L^2(B_{r/2})}^2 \leq   cr^{n+\tia}$ follows as in \rif{holly2} (in turn as in \rif{10}), with $c\equiv c (\data,M)$. This is the analogue of \cite[(10.3)]{piovra}, but for the fact that here we find $\tia$ rather than $2\tia$ and the integral is supported in $B_{r/2}$ instead of $B_{r}$. Next, by defining the matrix $[\mathbb{A}(x)]_{ij}:= \partial_{z_j}A_{r}^i(Dv(x))$, to use \cite{piovra} we need to prove that there exists $\lambda \equiv \lambda (\data, \bb, M)>0$, independent of $r$, such that $\lambda \mathds{I}_{\rm d} \leq \mathbb{A}(x)\leq (1/\lambda)\mathds{I}_{\rm d}$ holds for a.e. $x \in B_{r/2}$. 
As a consequence of \rif{assif}$_2$ and of $|Dv|\lesssim M^2$ in $B_{r/2}$, we have
$$
\frac{|\xi|^2}{(M^4+1)^{\mu/2}} \leq c\langle \mathbb{A}(x)\xi, \xi\rangle\,,  \qquad
|\mathbb{A}(x)| \leq c [g(M^2)+ \bb^{q-2}]
$$
for a.e. $x\in B_{r/2}$ and $z, \xi\in \er^n$, where $c\equiv (\data, M)$. This is the crucial point where we use  After this, the proof follows exactly as \cite[(10.6)]{piovra} and the proof of the entire Theorem \ref{t3} is finally complete. 
\section{Theorem \ref{t1}, Corollary \ref{t2} and model examples \rif{verylinear}}\label{corsec}
For every integer $k\geq 0$, we consider the integrand 
$$
F(x,z) := \ccc(x) |z|L_{k+1}(|z|) +1,
$$
with $\ccc(\cdot)$ as in \rif{bound}$_2$ and $L_{k+1}(\cdot)$ as in \rif{verylinear}$_2$. 
Direct computations show that $F(\cdot)$ satisfies \rif{assif} with $g(t)\equiv L_{k+1}(t) +1$, with $\mu=1$ when $k=0$, and with any choice of $\mu>1$ when $k>0$; moreover, it also satisfies \rif{assi3}. The constants $\nu$ and $L$ depend on $\mu, k$ and $\Lambda$. Theorem \ref{t3} therefore applies to local minimizers of
$$
w \mapsto \int_{\Omega} [ \ccc(x) |Dw|L_{k+1}(|Dw|) +a(x)(Dw|^{2}+\bb^2)^{q/2}+1]\dx\,.
$$
On the other hand
the functional in the above display and
$$
w \mapsto \int_{\Omega} [ \ccc(x) |Dw|L_{k+1}(|Dw|) +a(x)(Dw|^{2}+\bb^2)^{q/2}]\dx
$$
share the same local minimizers, and therefore the regularity results stated in Theorem \ref{t3} hold for local minimizers of the last functional too. In this way the model case in \rif{verylinear} is covered. 
Theorem \ref{t1} and Corollary \ref{t2} then follow as special cases taking $k=0$ and $\bb=0$ and $\bb=1$, respectively. Note that, in the spirit of Corollary \ref{t2}, local minimizers of the functional 
$$
w \mapsto \int_{\Omega} \ccc(x) |Dw|L_{k+1}(|Dw|)\, dx
$$
are locally $C^{1,\alpha_0/2}$-regular in $\Omega$, for every $k\geq 0$, provided \rif{bound}$_2$ is assumed. 

\section{Vectorial cases and further generalizations}
When functionals as in \rif{modello} and \rif{generale} are considered in the vectorial case, i.e. minima are vector valued $u \colon \Omega \to \er^N$ and $N>1$, we can still obtain Lipschitz continuity results. In this situation it is unavoidable to impose a so-called Uhlenbeck structure \cite{uh}, that is \eqn{assi4}
$$
F(x, z)= \tilde F(x, |z|)\,,
$$
where $\tilde F\colon \Omega \times [0, \infty) \to [0, \infty)$ is a continuous function such that $t \mapsto \tilde F(x, t)$ is $C^2$-regular for every choice of $x\in \Omega$. This is obviously satisfied by the models in \rif{modello} and \rif{verylinear}. 
\begin{theorem}\label{t4}
Let $u\in W^{1,1}_{\loc}(\Omega;\er^N)$ be a local minimizer of the functional in \trif{generale} under assumptions \trif{bound}$_1$, \trif{assif}-\trif{0.1} and \trif{assi4}. There exists $\mum \in (1,2)$, depending only $n,q, \alpha, \alpha_0$, such that, if $1\leq \mu < \mum$, then 
\trif{30bis} 
holds as in Theorem \ref{t3}. \end{theorem}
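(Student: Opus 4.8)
The plan is to reduce Theorem~\ref{t4} to the scalar case by checking that every step of the proof of Theorem~\ref{t3} goes through under the Uhlenbeck structure \rif{assi4}. The only place where scalarity was genuinely used in the proof of Theorem~\ref{t3} is the differentiated Caccioppoli inequality behind Proposition~\ref{fzfz}: for scalar equations one tests with $\varphi_s=\eta^2(E_{\textnormal{i}}(\snr{Dv})-\kk)_+D_s v$, summed over $s$, to produce inequality \rif{vetti}. In the vectorial Uhlenbeck setting one instead works directly with the full gradient $Dv\in\er^{N\times n}$: writing $F(x,z)=\tilde F(x,\snr{z})$ with $\snr{z}$ the Euclidean norm on $\er^{N\times n}$, the frozen integrand $H_{\omega,\sigma,\textnormal{i}}(z)=\tilde F(x_{\rm c},\snr{z})+a_{\sigma,\textnormal{i}}(B)[\ell_\omega(z)]^q$ still has the ellipticity bounds \rif{rege.2i}, now with $\partial_{zz}H_{\omega,\sigma,\textnormal{i}}$ acting on matrices; the classical regularity theory for such Uhlenbeck systems (as in \cite{uh}, together with \cite{manth1, manth2}) still gives $Dv\in W^{1,\infty}_{\loc}\cap W^{2,2}_{\loc}$ and $\partial_z H_{\omega,\sigma,\textnormal{i}}(Dv)\in W^{1,2}_{\loc}$, so \rif{0.fz} holds verbatim.

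First I would re-run the proof of Proposition~\ref{fzfz} replacing the scalar test function by $\varphi\equiv \eta^2(E_{\textnormal{i}}(\snr{Dv})-\kk)_+ D_s v$ componentwise in the equation differentiated in $x_s$ and summing over $s\in\{1,\dots,n\}$; the Uhlenbeck structure guarantees that $\sum_s\langle \partial_{zz}H_{\textnormal{i}}(Dv)DD_sv,D(D_sv)\rangle$ controls $\lambda_{\textnormal{i}}(\snr{Dv})\snr{D^2v}^2$ from below and that $E_{\textnormal{i}}(\snr{Dv})$ is still a legitimate scalar quantity whose gradient satisfies $\snr{D(E_{\textnormal{i}}(\snr{Dv}))}\le c\,\lambda_{\textnormal{i}}(\snr{Dv})\snr{Dv}\snr{D^2v}$. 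This yields the exact analogues of \rif{vetti}, \rif{10.fz} and hence \rif{8.fz}, \rif{10.fz2}, with identical exponents and the same function $\mu_n(\cdot)$. Consequently Lemma~\ref{le2}, the comparison Lemma~\ref{comparazione} (whose proof only uses the monotonicity inequality \rif{rege.2i}$_4$, the Euler--Lagrange equations \rif{el}, \rif{el2}, and the elementary bounds \rif{Vm}, \rif{l6}, all of which hold for matrix-valued arguments) and the fractional Caccioppoli Lemma~\ref{fracacc} go through unchanged: the dyadic/atomic patching in Step~2 of Lemma~\ref{fracacc} is purely measure-theoretic and blind to the target dimension.

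Next I would observe that the nonlinear potential iteration (Lemma~\ref{revlem}, Proposition~\ref{priora}) applies to the scalar function $w\equiv E_{\omega,\sigma_\eps}(\cdot,\snr{Du_{\omega,\eps}})$ exactly as before, since all the structural inequalities feeding \rif{16} have been re-established; the choice of $\delta_1,\delta_2$ and of $\mum$ in \rif{ildelta}--\rif{minori} depends only on $n,q,\alpha,\alpha_0$ and is unaffected. This gives the a~priori Lipschitz bound \rif{28b}, and then the approximation scheme of Section~\ref{aux} (which uses only convexity of $z\mapsto F(x,z)$ via $t\mapsto tg(t)$ convex and $z\mapsto\snr{z}$, still convex on $\er^{N\times n}$, the absence of Lavrentiev phenomenon from Lemma~\ref{nolav}, and weak lower semicontinuity of $\mathcal N(\cdot)$, all valid in the vectorial setting) produces \rif{30bis} for the vectorial minimizer by the same limiting argument as in Section~\ref{conv}. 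Note that here one only claims the Lipschitz estimate \rif{30bis}, not gradient H\"older continuity, so the delicate De~Giorgi--Nash--Moser step of Section~\ref{latecnica}---which genuinely needs scalarity---is not invoked; this is why Theorem~\ref{t4} is stated with $\mum\in(1,2)$ rather than $(1,3/2)$, as no further restriction beyond those in \rif{sce5} and \rif{minori} is needed once one stops at the Lipschitz bound.

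The main obstacle, and the only point deserving genuine care, is the vectorial differentiated Caccioppoli estimate underlying Proposition~\ref{fzfz}: one must verify that testing the differentiated system with $\eta^2(E_{\textnormal{i}}(\snr{Dv})-\kk)_+D_sv$ and summing over $s$ still produces the two good terms on the left of \rif{vetti}---namely the Hessian term weighted by $\lambda_{\textnormal{i}}(\snr{Dv})(E_{\textnormal{i}}(\snr{Dv})-\kk)_+$ and the full square $\snr{D(E_{\textnormal{i}}(\snr{Dv})-\kk)_+}^2\eta^2$---while the right-hand side is still controlled by $[\Lambda_{\textnormal{i}}/\lambda_{\textnormal{i}}]^2(E_{\textnormal{i}}(\snr{Dv})-\kk)_+^2\snr{D\eta}^2$. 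This is where the Uhlenbeck hypothesis \rif{assi4} is essential: it makes $\partial_{zz}H_{\omega,\sigma,\textnormal{i}}$ a \emph{scalar-coefficient} operator in the radial direction plus a positive correction, so that the cross terms arising from $D_s(\snr{Dv})$ have a sign and the algebra reduces to the scalar computation already carried out in \cite[Lemmas 4.5--4.6]{BM}. Once this single estimate is in place, everything downstream is a verbatim repetition of the proof of Theorem~\ref{t3} up to the point \rif{30bis}, and the proof is complete.
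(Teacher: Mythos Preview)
Your proposal is correct and follows the same route as the paper: reduce to the scalar proof by re-establishing the differentiated Caccioppoli inequality \rif{vetti} in the vectorial Uhlenbeck setting (the paper points to \cite[Lemma~5.6]{BM} for this), after which everything up to \rif{30bis} goes through verbatim. One minor inaccuracy: your explanation of why $\mum\in(1,2)$ rather than $(1,3/2)$ is off---the H\"older step in Section~\ref{latecnica} imposes no extra restriction on $\mu$ (see Remark~\ref{mare}), so the wider interval in Theorem~\ref{t4} is just a looser statement, not a byproduct of stopping at Lipschitz.
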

The proof of Theorem \ref{t4} is essentially the same of the one given for Theorem \ref{t3}, once the content of Proposition \ref{fzfz} is available. This is in fact the only point where \rif{assi4} enters the game. Inspecting the proof of Proposition \ref{fzfz} shows that this works in the vectorial case provided \rif{vetti} holds. In turn, this inequality follows along the lines of the estimates in \cite[Lemma 5.6]{BM}. Notice that, without an additional structure assumption as in \rif{assi4}, Theorem \ref{t4} cannot hold and counterexamples to Lipschitz regularity emerge already when considering uniformly elliptic systems \cite{SY}. We also remark that we expect gradient H\"older continuity in the vectorial case as well; the proof must be different from the one given in Section \ref{latecnica}, and based on linearization methods as those originally introduced in \cite{uh} and developed in large parts of the subsequent literature. Another direct generalization, this time in the scalar case, occurs for functionals of the type
\eqn{tipino}
$$
 w\mapsto \mathcal{N}(w,\Omega):=\int_{\Omega}\left[F(x,Dw)+a(x)G( Dw)\right] \dx\,,
$$
where $F(\cdot)$ is as Theorem \ref{t3} and $G\colon \er^n\to [0, \infty)$ belongs to $C^1(\er^n)\cap C^2(\er^n\setminus\{0\})$ and satisfies 
$$
\begin{cases}
\ \nu(|z|^2+\bb^2)^{q/2} \leq G(z) \leq L(|z|^2+\bb^2)^{q/2}\\
\ \displaystyle \nu (|z|^2+\bb^2)^{(q-2)/2}|\xi|^2  \leq \langle \partial_{zz}G(z)\xi,\xi\rangle\\
\ |\partial_{zz}G(z)| \leq L(|z|^2+\bb^2)^{(q-2)/2}
\end{cases}
$$
for every $z, \xi \in \er^n$, $|z|\not= 0 $. 
Theorem \ref{t3} continuous to hold in this last case, with essentially the same proof. The only difference worth pointing out is the new shape of the integrands $H(x, z):=F(x, z)+a(x)G(z)$ in \rif{hhh0} and $H_{\omega,\sigma}(\cdot)$ (and the related minimal one $H_{\omega,\sigma,\textnormal{i}}(\cdot)$) in \rif{defiH}, accordingly defined as 
$$
\begin{cases}
\, H_{\omega,\sigma}(x,z):= F(x,z)+a_{\sigma}(x)G_{\omega}(z)\\
\, \displaystyle G_{\omega}(z):=\int_{\mathcal B_1}G(z+\omega \lambda)\phi(\lambda)\, {\rm d}\lambda\,.
\end{cases}
$$
Here $\{\phi_{\varepsilon}\}\subset C^{\infty}(\mathbb{R}^n)$, denotes a family of radially symmetric mollifiers, defined as $\phi_{\varepsilon}(x):=\phi(x/\varepsilon)/\varepsilon^n$, where $\phi\in C^{\infty}_{c}(\BB)$, $ \nr{\phi}_{L^{1}(\mathbb{R}^n)}=1$, $\mathcal B_{3/4} \subset \supp\,  \phi$.  The definition of $\eh_{\omega,\sigma,\textnormal{i}}(\cdot)$, instead, remains the same. Following for instance \cite[Section 4.5]{dm} and \cite[Section 6.7]{piovra}
it is possible to show that the newly defined integrands $H_{\omega,\sigma}(\cdot)$ still have the properties described in Section \ref{const}. In particular \rif{rege.2} and \rif{regecor} still hold for a suitable constant $c$ with the same dependence upon the various parameters described there. Finally, when again considering the vectorial case for functionals as in \rif{tipino} we can still obtain local Lipschitz regularity of minima provided \rif{assi4} holds together with $G(z)\equiv \tilde G(|z|)$ and $\tilde G(\cdot)$ is $C^2$-regular outside the origin (see for instance \cite{BM, ciccio} for precise assumptions). 
\vspace{6mm}

{\em Acknowledgments}. The first author is supported by INdAM-GNAMPA via the project ``Problemi non locali: teoria cinetica e non uniforme ellitticità", and by the University of Parma via the project ``Local vs nonlocal: mixed type operators and nonuniform ellipticity". Both the authors are grateful to the referees for the careful reading of the original version of the manuscript and for the many suggestions and comments that eventually led to a better presentation. 
\vspace{5mm}

{\bf Funding} Open access funding provided by Universit\`a degli Studi di Parma within the CRUI-CARE Agreement.

\vspace{5mm}

{\bf Data Availibility} No data is attached to this paper.

\vspace{5mm}

{\bf Declarations}

\vspace{5mm}

{\bf Conflict of interests and data.} The authors declare to have no conflict of interests. No data are attached to this paper. 

\vspace{5mm}

{\bf Open Access} This article is licensed under a Creative Commons Attribution 4.0 International License, which permits use, sharing, adaptation, distribution and reproduction in any medium or format, as long as you give appropriate credit to the original author(s) and the source, provide a link to the Creative Commons licence, and indicate if changes were made. The images or other third party material in this article are included in the article’s Creative Commons licence, unless indicated otherwise in a credit line to the material. If material is not included in the article’s Creative Commons licence and your intended use is not permitted by statutory regulation or exceeds the permitted use, you will need to obtain permission directly from the copyright holder. To view a copy of this licence, visit \href{http://creativecommons.org/ licenses/by/4.0/}{creativecommons.org/ licenses/by/4.0/}.

\address{
Dipartimento SMFI\\
Universit\`a di Parma\\
Parco Area delle Scienze 53/a, I-43124, Parma, Italy \\
\email{cristiana.defilippis@unipr.it}
\and
Dipartimento SMFI\\
Universit\`a di Parma\\
Parco Area delle Scienze 53/a, I-43124, Parma, Italy \\
email:{giuseppe.mingione@unipr.it}}


\begin{thebibliography}{}
\bibitem{AFM} E. Acerbi, G. Bouchitté, I. Fonseca, Relaxation of convex functionals: the gap problem, \emph{
Ann. Inst. Henri Poincaré, Anal. Non Linéaire} 20, 359--390 (2003).

\bibitem{AH} D.R. Adams, L.I. Hedberg, {\em Function spaces and potential theory}, Grundlehren der mathematischen Wissenschaften 314. Springer-Verlag, Berlin, 1996. 

\bibitem{balci2} A. Balci, L. Diening, M. Surnachev, 
New examples on Lavrentiev gap using fractals, 
\emph{Calc. Var. \& PDE} 59:180 (2020).
\bibitem{balci} A. Balci, C. Ortner, J. Storn, Crouzeix-Raviart finite element method for non-autonomous variational problems with Lavrentiev gap, \emph{Numer. Math.} 151, 779--805 (2022). 


\bibitem{BCM} P. Baroni, M. Colombo, G. Mingione, Regularity for general functionals with double phase, \emph{Calc. Var. \& PDE} 57:62 (2018).
\bibitem{byun0}  S. Baasandorj, S.S. Byun, Regularity for Orlicz phase problems, \emph{Memoirs Amer. Math. Soc.}, to appear. 
\bibitem{bbgi} L. Beck, M. Bulíček, F. Gmeineder, On a Neumann problem for variational functionals of linear growth, \emph{Ann. Sc. Norm. Super. Pisa Cl. Sci. (V)} 21, 695--737 (2020). 
\bibitem{BM} L. Beck, G. Mingione, Lipschitz bounds and nonuniform ellipticity, \emph{Comm. Pure Appl. Math.} 73, 944-1034 (2020).
\bibitem{BS0} P. Bella,  M. Sch\"affner, Local boundedness and Harnack inequality for solutions of linear nonuniformly elliptic equations, \emph{Comm. Pure Appl. Math.} 74, 453-477 (2021). 

\bibitem{BS} P. Bella, M. Sch\"affner, On the regularity of minimizers for scalar integral functionals with $(p,q)$-growth, \emph{Anal. \& PDE} 13, 2241--2257 (2020). 

\bibitem{BS2} P. Bella, M. Sch\"affner, Lipschitz bounds for integral functionals with $(p,q)$-growth conditions, \emph{Adv. Calc. Var.} \href{https://doi.org/10.1515/acv-2022-0016}{10.1515/acv-2022-0016}


\bibitem{bildhauer} M. Bildhauer, \emph{Convex variational problems. Linear, nearly linear and anisotropic growth conditions.} Lecture Notes in Mathematics, 1818. Springer-Verlag, Berlin, 2003. x+217. 
\bibitem{BF} M. Bildhauer, M. Fuchs, Partial regularity for variational integrals with $(s,\mu,q)$-growth, \emph{Calc. Var. \& PDE} 13, 537--560 (2001).
\bibitem{BF2} M. Bildhauer, M. Fuchs, $C^{1,\alpha}$-solutions to non-autonomous anisotropic variational problems, \emph{Calc. Var. \& PDE} 24, 309--340 (2005). 
\bibitem{BFM} G. Bouchitté, I. Fonseca, J. Malý, 
The effective bulk energy of the relaxed energy of multiple integrals below the growth exponent, 
\emph{Proc. R. Soc. Edinb., Sect. A, Math.} 128, 463-479 (1998).
\bibitem{buli} M. Bulíček, P. Gwiazda,  J. Skrzeczkowski, 
On a range of exponents for absence of Lavrentiev phenomenon for double phase functionals, \emph{Arch. Ration. Mech. Anal.} 246, 209--240 (2022). 
\bibitem{byun1}  S.S. Byun, J. Oh, 
Regularity results for generalized double phase functionals, \emph {Anal. PDE} 13, 1269--1300 (2020).
\bibitem{CM} M. Colombo, G. Mingione, Regularity for double phase variational problems, \emph{Arch. Ration. Mech. Anal.} 215, 443--496 (2015). 

\bibitem{camel} C. De Filippis, Quasiconvexity and partial regularity via nonlinear potentials, \emph{J. Math. Pures Appl.} 163, 11--82 (2022).

\bibitem{dm0}  C. De Filippis, G. Mingione, A borderline case of Calderón-Zygmund estimates for nonuniformly elliptic problems, \emph{St. Petersburg Math. J.} 31, 455--477  (2020).
\bibitem{DM} C. De Filippis, G. Mingione, Manifold constrained nonuniformly elliptic problems, \emph{J. Geom. Anal.} 30:1661-1723 (2020). 
\bibitem{dm} C. De Filippis, G. Mingione, On the regularity of minima of non-autonomous functionals, \emph{J. Geom. Anal.} 30:1584-1626 (2020).
\bibitem{ciccio}  C. De Filippis, G. Mingione, Lipschitz bounds and nonautonomous integrals, \emph{Arch. Ration. Mech. Anal.} 242, 973-1057 (2021). 
\bibitem{piovra} C. De Filippis, G. Mingione, Nonuniformly elliptic Schauder theory, \href{https://arxiv.org/abs/2201.07369}{arXiv:2201.07369v1}.
\bibitem{camel2} C. De Filippis, B. Stroffolini, Singular multiple integrals and nonlinear potentials, {\em J. Funct. Anal.} 285, paper 109952 (2023).
\bibitem{dibe} E. DiBenedetto, $C^{1+\alpha}$-local regularity of weak solutions of degenerate elliptic equations, \emph{Nonlinear Anal.} 7, 827-850 (1983). 
\bibitem{guide} E. Di Nezza, G. Palatucci, E. Valdinoci, Hitchhiker's guide to the fractional Sobolev spaces, \emph{Bull. Sci. Math.} 136, 521-573 (2012).
\bibitem{sharp} L. Esposito, F. Leonetti, G. Mingione, Sharp regularity for functionals with $(p,q)$ growth, \emph{J. Diff. Equ.} 204, 5-55 (2004).
\bibitem {FMal} I. Fonseca, J. Mal\'y,
Relaxation of multiple integrals below the growth exponent, {\em
Ann.~Inst.~H.~Poincar\'e Anal.~Non Lin\'eaire} 14, 309-338 (1997)
\bibitem{FMM} I. Fonseca, J. Mal\'y, G. Mingione, Scalar minimizers with fractal singular sets, \emph{Arch. Ration. Mech. Anal.} 172, 295-307 (2004).
\bibitem {FMa} I. Fonseca, P. Marcellini, 
Relaxation of multiple integral in subcritical Sobolev spaces, {\em
J. Geom. Anal.} 7, 57-81 (1997).
\bibitem {FrS} J. Frehse, G. Seregin, Regularity of solutions to variational problems of the deformation theory of plasticity with logarithmic hardening, {\em Proc. St. Petersburg Math. Soc.} V, 127--152, {\em Amer. Math. Soc. Transl. Ser. 2}, 193, Amer. Math. Soc., Providence, RI, 1999.
\bibitem {FM} M. Fuchs, G. Mingione, Full $C^{1,\alpha}$-regularity for free and constrained local minimizers of elliptic variational integrals with nearly linear growth, \emph{manuscripta math.} 102 (2000), 227--250.
\bibitem {FS1} M. Fuchs, G. Seregin, \emph{Variational methods for problems from plasticity theory and for generalized Newtonian fluids}, Lecture Notes in Mathematics, 1749. Springer-Verlag, Berlin, 2000. vi+269 pp.
\bibitem{gg2} M. Giaquinta, E. Giusti, Differentiability of minima of nondifferentiable functionals, \emph{Invent. Math.} 72, 285-298 (1983).
\bibitem{giamod} M. Giaquinta, G. Modica, Remarks on the regularity of the minimizers of certain degenerate functionals, \emph{manuscripta math.} 57, 55-99 (1986).
\bibitem{giu} E. Giusti, {\em Direct Methods in the calculus of variations}, World Scientific Publishing Co., Inc., River Edge (2003).
\bibitem{gme1} F. Gmeineder, Partial regularity for symmetric quasiconvex functionals on BD, \emph{J. Math. Pures Appl. (IX)} 145 83--129 (2021). 
\bibitem{gme2} F. Gmeineder,  The regularity of minima for the Dirichlet problem on BD, \emph{Arch. Ration. Mech. Anal.} 237, 1099--1171 (2020). 
\bibitem{gmek1} F. Gmeineder, J. Kristensen, Partial regularity for BV minimizers, \emph{Arch. Ration. Mech. Anal.} 232, 1429--1473 (2019). 
\bibitem{gmek2} F. Gmeineder, J. Kristensen, Sobolev regularity for convex functionals on BD, \emph{Calc. Var. \& PDE} 58:56 (2019). 
\bibitem{ha} C. Hamburger, Regularity of differential forms minimizing degenerate elliptic functionals, \emph{J. reine angew. Math. (Crelle J.)} 431, 7-64 (1992).
\bibitem{HO1}  P. Hästö, J. Ok, Maximal regularity for local minimizers of non-autonomous functionals, \emph{J. Eur. Math. Soc.} 24, 1285--1334 (2022).
\bibitem{HO2}  P. Hästö, J. Ok, 
Regularity theory for non-autonomous partial differential equations without Uhlenbeck structure, {\em Arch. Ration. Mech. Anal.}  245, 1401--1436 (2022). 

\bibitem{ivanov0} A.V. Ivanov, The Dirichlet problem for second order quasilinear nonuniformly elliptic equations, \emph{Trudy Mat. Inst. Steklov.} 116 (1971) 34--54.

\bibitem{ivanov1} A.V. Ivanov, Local estimates of the maximum modulus of the first derivatives of the solutions of quasilinear nonuniformly elliptic and nonuniformly parabolic equations and of systems of general form, \emph{Proc. Steklov Inst. Math.} 110, 48--71 (1970).

\bibitem{ivanov2} A.V. Ivanov, {\em Quasilinear degenerate and nonuniformly elliptic and parabolic equations of second order}, Proc. Steklov Inst. Math. 1984, no. 1 (160), xi+287 pp.



\bibitem{IO} N.M. Ivočkina,  A.P. Oskolkov, Nonlocal estimates of the first derivatives of solutions of the Dirichlet problem for nonuniformly elliptic quasilinear equations, {\em Zap. Naučn. Sem. Leningrad. Otdel. Mat. Inst. Steklov. (LOMI)} 5, 37--109 (1967). 

\bibitem {kilp} T. Kilpel\"ainen,  J. Mal\'y, The Wiener test and potential
estimates for quasilinear elliptic equations, {\em Acta Math.} 172, 137--161 (1994).

\bibitem {koch1} L. Koch, Global higher integrability for minimisers of convex functionals with $(p,q)$-growth, 
\emph{Calc. Var. \& PDE} 60:63 (2021).

\bibitem {koch2} L. Koch, On global absence of Lavrentiev gap for functionals with
$(p, q)$-growth, \href{https://arxiv.org/abs/2210.15454v3}{arXiv:2210.15454}

\bibitem {KM} J. Kristensen,  G. Mingione, The singular set of $\omega$-minima, \emph{Arch. Ration. Mech. Anal.} 177,  93--114 (2005). 

\bibitem {KuM} T. Kuusi,  G. Mingione, Gradient regularity for nonlinear parabolic equations, \emph{Ann. Scu. Norm. Sup. Pisa Cl. Sci. (V)} 12, 755--822 (2013). 






\bibitem {LU} O.A.~Ladyzhenskaya,  N.N.~Ural'tseva, Local estimates for gradients of solutions of nonuniformly elliptic
and parabolic equations, \emph{Comm.~Pure Appl.~Math.} 23, 677--703 (1970).  



\bibitem {manth1} J.\,J. Manfredi, Regularity of the gradient for a class of nonlinear possibly
degenerate elliptic equations, {\em Ph.D. Thesis}, University of
Washington, St. Louis.

\bibitem {manth2} J.\,J. Manfredi, Regularity for minima of functionals with $p$-growth, \emph{J.~Diff.~Equ.} 76, 203--212 (1988). 





\bibitem{M0} P.~Marcellini, On the definition and the lower
semicontinuity of certain quasiconvex integrals, {\em
Ann.~Inst.~H.~Poincar\'e Anal.~Non Lin\'eaire} 3, 391--409 (1986). 

\bibitem{ma5} P. Marcellini, The stored-energy for some discontinuous deformations in nonlinear elasticity, \emph{Partial Differential Equations and the Calculus of Variations} vol. II, Birkh\"auser Boston Inc. (1989).

\bibitem{M1} P.~Marcellini, Regularity of
minimizers of integrals of the calculus of variations with non
standard growth conditions, {\em Arch.~Rat.~Mech.~Anal.} 105, 267--284 
(1989). 

\bibitem{M2} P.~Marcellini, Regularity and
existence of solutions of elliptic equations with $p,q$-growth
conditions, \emph{J.~Diff.~Equ.} 90, 1--30 (1991).

\bibitem{M3} P.~Marcellini, Regularity for elliptic equations
with general growth conditions, \emph{J.~Diff.~Equ.} 105, 
296--333 (1993). 

\bibitem{MP} P.~Marcellini, G. Papi, Nonlinear elliptic systems with general growth, \emph{J. Diff. Equ.} 221 412--443 (2006). 


\bibitem {HM} V.~Maz'ya,  M.~Havin,  A nonlinear potential theory, \emph{Russ.~Math.~Surveys}, 27,  71--148 (1972). 


\bibitem{serrin} J. Serrin, 
The problem of Dirichlet for quasilinear elliptic differential equations with many independent variables, 
\emph{Philos. Trans. R. Soc. Lond., Ser. A}  264, 413-496 (1969).

\bibitem{Simon} L.~Simon, Interior gradient bounds for nonuniformly elliptic equations, \emph{Indiana Univ. Math. J.} 25, 821--855(1976).

\bibitem{SY} V. Sverák, X. Yan, Non-Lipschitz minimizers of smooth uniformly convex functionals, {\em 
Proc. Natl. Acad. Sci. USA} 99, 15269--15276 (2002).



\bibitem{triebel} H. Triebel,
\emph{Theory of function spaces. III}. Monographs in Mathematics, 100. Birkhäuser Verlag, Basel, 2006. 

\bibitem{trudinger} N. Trudinger, The Dirichlet problem for nonuniformly elliptic equations, {\em Bull. Am. Math. Soc.} 73, 410--413 (1967).

\bibitem{uh} K. Uhlenbeck, K, Regularity for a class of non-linear elliptic systems, {\em Acta Math.} 138, 219--240 (1977).

\bibitem{Z0}  V.\,V.~Zhikov, On Lavrentiev's Phenomenon, {\em Russian J. Math. Phys.} 3, 249--269 (1995). 
 
\bibitem{Z1} V.\,V.~Zhikov, On some variational problems, \emph{Russian J. Math. Phys.}  5, 105--116 (1997).


\bibitem{Zhom} V.\,V.~Zhikov, S.M. Kozlov, O. A. Oleĭnik, \emph{Homogenization of differential operators and integral functionals.} Springer-Verlag, Berlin, 1994. xii+570 pp. ISBN: 3-540-54809-2 



\end{thebibliography}
\end{document}